\theoremstyle{plain} \numberwithin{equation}{section}
\newtheorem{theorem}{Theorem}[section]
\newtheorem{corollary}[theorem]{Corollary}
\newtheorem{lemma}[theorem]{Lemma}
\newtheorem{proposition}[theorem]{Proposition}
\theoremstyle{definition}
\newtheorem{definition}[theorem]{Definition}
\newtheorem{remark}[theorem]{Remark}
\newtheorem{notation}[theorem]{Notation}
\newcommand{\C}{\mathcal{C}}
\newcommand{\F}{\mathcal{F}}
\newcommand{\mcL}{\mathcal{L}}
\newcommand{\mcQ}{\mathcal{Q}}
\newcommand{\mcM}{\mathcal M}
\newcommand{\mcN}{\mathcal N}
\newcommand{\NN}{\mathbb N}
\newcommand{\PF}{\mathbb{PF}}
\newcommand{\PE}{\mathbb{P}E}
\newcommand{\PoE}{\mathbb{P}_oE}
\newcommand{\PP}{\mathbb P}
\newcommand{\FF}{\mathbb F}
\newcommand{\mcS}{\mathcal S}
\newcommand{\cl}{\mathrm cl}
\newcommand{\rk}{\mathrm{rk}}
\newcommand{\rowsp}{\mathrm{rowsp}}
\newcommand{\colsp}{\mathrm{colsp}}
\newcounter{alp}
\newcounter{ara}
\newcounter{rom}
\begin{document}
\title{The Projectivization Matroid of a $q$-Matroid}
\author{Benjamin Jany \footnote{Department of Mathematics, University of Kentucky, Lexington KY 40506-0027, USA; benjamin.jany@uky.edu.} }
\date{May 4, 2022}

\maketitle

\begin{abstract}
In this paper, we investigate the relation between a $q$-matroid and its associated matroid called the projectivization matroid.
The latter arises by projectivizing the groundspace of the $q$-matroid and considering the projective space as the groundset of the associated matroid on which is defined a rank function compatible with that of the $q$-matroid. We show that the projectivization map is a functor from categories of $q$-matroids to categories of matroids, which allows to prove new results about maps of $q$-matroids. We furthermore show the characteristic polynomial of a $q$-matroid is equal to that of the projectivization matroid. We use this relation to establish a recursive formula for the characteristic polynomial of a $q$-matroid in terms of the characteristic polynomial of its minors. Finally we use the projectivization matroid to prove a $q$-analogue of the critical theorem in terms of $\FF_{q^m}$-linear rank metric codes and $q$-matroids.
\end{abstract}

\textbf{Keywords:} Projectivization matroid, $q$-matroids, characteristic polynomial, strong maps, weak maps, rank metric code, critical theorem.

\section{Introduction}

In recent years, $q$-matroids, the $q$-analogue of matroids, have been intensively studied due to their connection to linear rank metric codes. They were first studied by Jurrius and Pellikaan in \cite{JP18}, who showed that an $\FF_{q^m}$-linear rank metric code induces a $q$-matroid. It was shown later on that matrix linear rank metric codes induce a $q$-polymatroid, a generalization of $q$-matroids (see \cite{ GLJ21qpolymatroids,gorla2020rank}).
Since then, many results on $q$-(poly)matroids, and how they relate to rank metric codes, have been established, see for example \cite{cyclicflats,BCIJ21,BCJ21,GLJ21,GLJ21qpolymatroids,gorla2020rank,JPV21}.
Because of their $q$-analogue nature, many of the newly discovered properties of $q$-matroids turn out to be analogues of well established matroid theory results. See \cite{crapo1970foundations, Greene,oxley,RotaMatroid} for more information on matroid theory.
It has therefore been of interest to determine which notions and properties of matroid theory can be generalized to $q$-matroids.

In \cite{BCJ21}, the authors show that, similarly to matroids, there exists a variety of cryptomorphic definitions for $q$-matroids. In this paper, we define $q$-matroids via a rank function on the lattice of subspace of a finite dimensional vector space over a finite field, and occasionally use the flat cryptomorphism. 
In \cite{BCIJ21}, Byrne and co-authors, define the notion of a characteristic polynomial for $q$-polymatroids and use it to establish a $q$-analogue of the Assmus-Mattson Theorem. Furthermore they show that the characteristic polynomial of a $q$-polymatroid induced by a linear rank metric code determines the weight distribution of the code.
Maps between $q$-matroids are defined and studied in \cite{GLJ21}, which allows the authors to consider $q$-matroids from a category theory perspective. They introduce the notions of weak and strong maps, which respectively respect the rank structure and the flat structure of $q$-matroids. Although those maps are defined in an analogous way than the weak and strong maps between matroids (see \cite{Heunen2018TheCO, RotaMatroid}), substantial differences occur when comparing categories of $q$-matroids with categories of matroids. In fact, the authors show that, unlike for categories of matroids, a coproduct does not always exist in the category of $q$-matroids with strong maps but always exists when the morphisms are linear weak maps.

 In \cite{JPV21}, Johnsen and co-authors make the connection between matroids and $q$-matroids more apparent by showing that a $q$-matroid induces a matroid, called the projectivization matroid. Furthermore, they show that the lattice of flats of the $q$-matroid is isomorphic to the lattice of flats of its projectivization matroid. This allows them to express the generalized rank weights of an $\FF_{q^m}$-linear rank metric code in terms of the Betti numbers of the dual of the projectivization matroid.  

In this paper, we further investigate the construction introduced in \cite{JPV21} and use the projectivization matroid as a tool to study
maps between $q$-matroids and the characteristic polynomial of a $q$-matroid. We define and study properties of the projectivization matroid in Section 3. 
In Section 4, we show that the projectivization map from a vector space to its projective space is a functor from the category of $q$-matroids with weak (resp.\ strong) maps to the category of matroids with weak (resp.\ strong) maps. We use the relation between those categories to show that strong maps between $q$-matroids are weak maps.
Although Section 4 shows how the projectivization matroid can be used in a more category theory approach, results therein are not used in later sections. The reader interested in the relation between the characteristic polynomial of a $q$-matroid and that of its projectivization matroid may skip Section 4 on a first reading.
We then proceed in Section 5 to study the characteristic polynomial of $q$-matroids. We start by showing that the characteristic polynomial is identically $0$ if the $q$-matroid contains a loop, and is fully determined by the lattice of flats otherwise. We use this fact to show that the characteristic polynomial of a $q$-matroid is equal to that of its projectivization matroid. This in turn, allows us to find a recursive formula for the characteristic polynomial of a $q$-matroid in terms of the characteristic polynomial of its minors. 
Finally, in Section 6, we consider the projectivization matroid of $q$-matroids induced by an $\FF_{q^m}$-linear rank metric code. In \cite{ABNR21}, Alfarano et.\ al.\ associate a linear block code with the Hamming metric to an $\FF_{q^m}$-linear rank metric code $\C$. This code, called a Hamming-metric code associated to $\C$, induces a matroid that turns out to be equivalent to the projectivization matroid of the $q$-matroid associated with $\C$. This connection allows us to prove a $q$-analogue of the critical theorem in terms of $\FF_{q^m}$-linear rank metric codes and $q$-matroids.

\vspace{.2cm}

\underline{\textbf{Notation:}}
Throughout  $\FF_q$ denotes a finite field of order $q$. $E$ denotes a finite dimensional vector space over $\FF_q$, and $\mcL(E)$ denotes the lattice of subspace of $E$. $S$ and $T$  are finite sets, $2^{S}$ is the power set of $S$ and $[n] := \{1, \ldots, n\}$ for $n \in \NN_0$. Furthermore given a set $S$ and $A \subseteq S$, let $S-A := \{e \in S \, : \, e \notin A\}$. 
Finally, $q$-matroids will be denoted by the script letters $\mcM$, $\mcN$, whereas matroids will be denoted by the capital letters $M, N$.

\section{Basic Notions of Matroids and $q$-Matroids}
In this section we review well-known notions of matroids and $q$-matroids that will be used throughout the paper. For more details about matroids and $q$-matroids the reader may refer to \cite{BCJ21, JP18, oxley, RotaMatroid}.

\begin{definition}
A \emph{matroid} is an ordered pair $M = (S, r)$, where $S$ is a finite set and $r$ is a function $r : 2^S \rightarrow \NN_0$ such that for all $A, B \in 2^{S}$ :
\begin{enumerate}
    \item[(R1)] Boundedness: $0 \leq r(A) \leq |A|$.
    \item[(R2)] Monotonicity: If $A \subseteq B$ then $r(A) \leq r(B)$.
    \item[(R3)] Submodularity: $r(A \cup B) + r(A \cap B) \leq r(A) + r(B)$.
\end{enumerate}
$S$ is called the \emph{groundset} of $M$ and $r$ its \emph{rank function}.
\end{definition}

Throughout, identify $\{e\}$ with $e$ and $\{v\}$ with $v$. Two matroids $M = (S, r_M)$ and $N = (T, r_N)$ are \emph{equivalent}, denoted $M \cong N$, if there exists a bijection between the groundsets, $\psi : S \rightarrow T$, such that $r_M(A) = r_N(\psi(A))$ for all $A \subseteq S$. 
Given a matroid $M = (S,r)$, $e \in S$ is a \emph{loop} of $M$ if $r(e) = 0$. $M$ is said to be \emph{loopless} if it does not contain any loops.
A subset $F \subseteq S$ is a \emph{flat} if $r(F \cup v) > r(F)$ for all $v \notin  F$.  It is well known that the collection of flats, denoted $\F_M$, forms a geometric lattice. 
For any $F_1, F_2 \in \F_M$, the meet and join are defined as follow $F_1 \wedge F_2 := F_1 \cap F_2$ and $F_1 \vee F_2 := \cl_M(F_1 \cup F_2)$, where $\cl_M(A) = \{v \in S \, :\, r(A \cup v) = r(A)\} = \bigcap \{F \in \F_{M} \, : \, A \subseteq F\}$.
Given $F_1, F_2 \in \F_{M}$, we say $F_2$ \emph{covers} $F_1$ if for all $F \in \F_M$ such that $F_1 \subseteq F \subseteq F_2$ then $F = F_1$ or $F = F_2$.
When discussing $\F_M$, we interchangeably use the terms collection of flats and lattice of flats. 
The flats of a matroid satisfy three axiomatic properties that fully determine the matroid.

\begin{proposition}\label{flats matroid}\cite[Sec. 1.4 Prob 11.]{oxley}
Let $M = (S, r_M)$ be a matroid and $\F_M$ its collection of flats. Then $\F_M$ satisfies the following:
\begin{enumerate}
    \item[(F1)] $S \in \F_M$.
    \item[(F2)] If $F_1, F_2 \in \F_M$ then $F_1 \cap F_2 \in \F_M$.
   \item[(F3)] Let $F \in \F_M$ and $v \notin F$, then there exists a unique $F' \in \F_M$ covering $F$ such that $F \cup v \subseteq F'$.
\end{enumerate}
Furthermore, $r_M$ is uniquely determined by $\F_M$ and  $r_M(A) = h(\cl_M(A))$ for $A \subseteq S$, where $h(F)$ denotes the height of $F$ in the lattice $\F_M$.
\end{proposition}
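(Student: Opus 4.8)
The plan is to route everything through the closure operator $\cl_M$, using only the standard properties that follow from (R1)--(R3): for $A \subseteq B \subseteq S$ one has $A \subseteq \cl_M(A)$, $A \subseteq B \Rightarrow \cl_M(A) \subseteq \cl_M(B)$, $\cl_M(\cl_M(A)) = \cl_M(A)$, $r_M(\cl_M(A)) = r_M(A)$, and the exchange property: $w \in \cl_M(A \cup v)\setminus \cl_M(A)$ implies $v \in \cl_M(A\cup w)$. The only one of these needing an argument is $r_M(\cl_M(A)) = r_M(A)$: if $r_M(A\cup v) = r_M(A\cup w) = r_M(A)$ for distinct $v,w$, then submodularity applied to $A\cup v$ and $A\cup w$ (which intersect in $A$) forces $r_M(A\cup\{v,w\}) \le r_M(A)$, and monotonicity gives equality; iterating over the elements of $\cl_M(A)$ proves the claim. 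I would also note at the outset that $F \in \F_M$ if and only if $\cl_M(F) = F$, since the flat condition ``$r_M(F\cup v) > r_M(F)$ for all $v\notin F$'' says precisely that $\cl_M(F)\subseteq F$.

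With this dictionary, (F1) is immediate ($S$ has no element $v\notin S$), and (F2) follows because for flats $F_1,F_2$ monotonicity gives $\cl_M(F_1\cap F_2)\subseteq \cl_M(F_i)=F_i$ for $i=1,2$, hence $\cl_M(F_1\cap F_2)\subseteq F_1\cap F_2$, so $F_1\cap F_2$ is closed. For (F3), given a flat $F$ and $v\notin F$, I would take $F' := \cl_M(F\cup v)$, a flat containing $F\cup v$. To see $F'$ covers $F$: if $G\in\F_M$ with $F\subsetneq G\subseteq F'$, choose $w\in G\setminus F$; then $w\in\cl_M(F\cup v)$ while $w\notin\cl_M(F)=F$, so the exchange property gives $v\in\cl_M(F\cup w)\subseteq\cl_M(G)=G$, whence $F\cup v\subseteq G$ and $F'=\cl_M(F\cup v)\subseteq G$, i.e.\ $G=F'$. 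For uniqueness, any flat $F''$ covering $F$ with $F\cup v\subseteq F''$ satisfies $F'=\cl_M(F\cup v)\subseteq\cl_M(F'')=F''$, and since $F\subsetneq F'\subseteq F''$ and $F''$ covers $F$ we get $F'=F''$.

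For the rank statement, the key intermediate fact is that if $F'$ covers $F$ in $\F_M$ then $r_M(F') = r_M(F)+1$: picking $v\in F'\setminus F$, monotonicity and idempotence give $F\subsetneq\cl_M(F\cup v)\subseteq\cl_M(F')=F'$, so $\cl_M(F\cup v)=F'$ by the covering property, whence $r_M(F')=r_M(F\cup v)\le r_M(F)+1$; and $r_M(F')>r_M(F)$ since otherwise $F'\subseteq\cl_M(F)=F$. Because $\cl_M(\emptyset)$ is contained in every flat it is the bottom $\hat 0$ of $\F_M$, and $r_M(\hat 0)=r_M(\emptyset)=0$ by (R1); since every cover raises $r_M$ by exactly $1$ and $r_M$ is a fixed function, every maximal chain of flats from $\hat 0$ to a flat $F$ has length $r_M(F)$, so $h(F)=r_M(F)$. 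Combining, $r_M(A)=r_M(\cl_M(A))=h(\cl_M(A))$, and since $\cl_M(A)=\bigcap\{F\in\F_M:A\subseteq F\}$ and $h$ depend only on the lattice $\F_M$, the rank function is determined by $\F_M$. I expect the only real care to be needed in (F3) --- steering the exchange property correctly --- and in justifying the ``each cover raises rank by one'' step without a circular appeal to gradedness; the rest is bookkeeping.
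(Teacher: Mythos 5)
The paper does not prove this proposition --- it is quoted as a known result with a citation to Oxley --- so there is no in-paper argument to compare against; your proof is correct and is essentially the standard textbook derivation via the closure operator. The one small inaccuracy is your claim that, of the closure properties you list, only $r_M(\cl_M(A)) = r_M(A)$ needs an argument: the Mac Lane--Steinitz exchange property ($w \in \cl_M(A \cup v) \setminus \cl_M(A)$ implies $v \in \cl_M(A \cup w)$) is not an axiom either and also requires a short derivation from unit rank increase (if $r_M(A\cup w) = r_M(A)+1$ and $r_M(A \cup v \cup w) = r_M(A\cup v) \leq r_M(A)+1$, then monotonicity forces $r_M(A\cup v) = r_M(A)+1 = r_M(A\cup v\cup w)$); since it is the engine of your (F3) argument, it deserves at least that one line. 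Everything else --- the cover-raises-rank-by-one step done without a circular appeal to gradedness, and the identification $h(\cl_M(A)) = r_M(A)$ --- is handled correctly.
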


We now turn to $q$-matroids, which are defined in an analogous way. Recall that $E$ denotes a finite dimensional vector space over $\FF_q$ and $\mcL(E)$ is the collection of subspace of $E$.

\begin{definition}
A \emph{$q$-matroid} is an ordered pair $\mcM = (E, \rho)$, where $\rho$ is a function $\rho : \mcL(E) \rightarrow \NN_0$ such that for all $V, W \in \mcL(E)$ :
\begin{enumerate}
    \item[(qR1)] Boundedness: $0 \leq \rho(V) \leq \dim(V)$.
    \item[(qR2)] Monotonicity: If $V \subseteq W$ then $\rho(V) \leq \rho(W)$.
    \item[(qR3)] Submodularity: $\rho(V + W) + \rho(V \cap W) \leq \rho(V) + \rho(W)$.
\end{enumerate}
$E$ is called the \emph{groundspace} of $\mcM$ and $\rho$ its \emph{rank function}.
\end{definition}

 Two $q$-matroids $\mcM = (E_1, \rho_{\mcM})$ and $\mcN = (E_2, \rho_{\mcN})$ are \emph{equivalent}, denoted $\mcM \cong \mcN$, if there exists a linear isomorphism $\psi: E_1 \rightarrow E_2$ such that $\rho_{\mcM}(V) = \rho_{\mcN}(\psi(V))$ for all $V \leq E_1$.
Given a $q$-matroid $\mcM = (E, \rho)$ we say $\langle e \rangle$, where $e \in E - \{0\}$, is a \emph{loop} if $\rho(\langle e \rangle) = 0$, and $\mcM$ is \emph{loopless} if it does not contain any loops. 
A subspace $F \leq E$ is a \emph{flat} of $\mcM$ if $\rho(F + \langle v \rangle) > \rho(F)$ for all $v \notin F$.
Furthermore the collection of flats of a $q$-matroid, denoted $\F_{\mcM}$, forms a geometric lattice as well. The meet and join operation are given by  $F_1 \wedge F_2 := F_1 \cap F_2$ and $F_1 \vee F_2 := \cl_{\mcM}(F_1 + F_2)$, where $\cl_{\mcM}(V) := \{v \, : \, \rho(V + \langle v \rangle) = \rho(V)\} = \bigcap \{F \in \F_{\mcM} \, : \, V \leq F\}$.
The notion of cover for the lattice of flats of $q$-matroids is identical to that of matroids.
The collection of flats of a $q$-matroid also satisfies three axiomatic properties that fully determine the $q$-matroid. 

\begin{proposition}\cite[Thm 48]{BCJ21}\label{flat $q$-matroid}
Let $\mcM = (E, \rho_{\mcM})$ be a $q$-matroid and $\F_{\mcM}$ be its collection of flats. Then $\F_{\mcM}$ satisfies the following:
\begin{enumerate}
   \item[(qF1)] $E \in \F_{\mcM}$.
    \item[(qF2)] If $F_1, F_2 \in \F_{\mcM}$ then $F_1 \cap F_2 \in \F_{\mcM}$.
   \item[(qF3)] Let $F \in \F_{\mcM}$ and $e \notin F$, then there exists a unique $F' \in \F_{\mcM}$ covering $F$ such that $F + \langle e \rangle \leq F'$.
\end{enumerate}
Furthermore, $\rho_{\mcM}$ is uniquely determined by $\F_{\mcM}$ and $\rho_{\mcM}(V) = h(\cl_{\mcM}(V))$ for all $V \leq E$, where $h(F)$ denotes the height of $F$ in the lattice $\F_{\mcM}$. Thus, we may denote this $q$-matroid as $\mcM = (E, \F_{\mcM})$.
\end{proposition}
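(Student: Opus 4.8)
The plan is to funnel everything through the closure operator $\cl_{\mcM}$ and, as the one substantive preliminary step, to establish that $\cl_{\mcM}\colon \mcL(E)\to\mcL(E)$ is a genuine closure operator: for every $V\in\mcL(E)$, the set $\cl_{\mcM}(V)$ is a subspace, $\rho_{\mcM}(\cl_{\mcM}(V))=\rho_{\mcM}(V)$, $V\le\cl_{\mcM}(V)$, the map is monotone, and it is idempotent. The key computation uses submodularity (qR3): if $v,w\in\cl_{\mcM}(V)$, i.e.\ $\rho_{\mcM}(V+\langle v\rangle)=\rho_{\mcM}(V)=\rho_{\mcM}(V+\langle w\rangle)$, then applying (qR3) to $V+\langle v\rangle$ and $V+\langle w\rangle$ --- whose sum is $V+\langle v,w\rangle$ and whose intersection contains $V$ --- together with (qR2) forces $\rho_{\mcM}(V+\langle v,w\rangle)=\rho_{\mcM}(V)$; hence every vector of $\langle v,w\rangle$ lies in $\cl_{\mcM}(V)$, so $\cl_{\mcM}(V)$ is a subspace, and iterating this over a finite generating set gives $\rho_{\mcM}(\cl_{\mcM}(V))=\rho_{\mcM}(V)$. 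Extensivity and monotonicity of $\cl_{\mcM}$ are immediate from the definition and (qR2), and idempotence follows from $\rho_{\mcM}(\cl_{\mcM}(V))=\rho_{\mcM}(V)$ and (qR2). From the definitions it is then a tautology that a subspace $F$ is a flat of $\mcM$ if and only if $\cl_{\mcM}(F)=F$.

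With this lemma in hand, (qF1) holds because there is no $v\notin E$, and (qF2) holds because monotonicity of $\cl_{\mcM}$ gives $\cl_{\mcM}(F_1\cap F_2)\le\cl_{\mcM}(F_1)\cap\cl_{\mcM}(F_2)=F_1\cap F_2$ whenever $F_1,F_2$ are flats. For (qF3), given a flat $F$ and $e\notin F$, set $F':=\cl_{\mcM}(F+\langle e\rangle)$. Submodularity applied to $F$ and $\langle e\rangle$, together with (qR1) for $\langle e\rangle$, gives $\rho_{\mcM}(F+\langle e\rangle)\le\rho_{\mcM}(F)+1$; since $e\notin F=\cl_{\mcM}(F)$, monotonicity gives $\rho_{\mcM}(F+\langle e\rangle)>\rho_{\mcM}(F)$, so $\rho_{\mcM}(F')=\rho_{\mcM}(F)+1$. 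If $G$ is a flat with $F\le G\le F'$, then $\rho_{\mcM}(F)\le\rho_{\mcM}(G)\le\rho_{\mcM}(F)+1$: in the first case every $g\in G$ has $\rho_{\mcM}(F+\langle g\rangle)\le\rho_{\mcM}(G)=\rho_{\mcM}(F)$, so $g\in\cl_{\mcM}(F)=F$ and $G=F$; in the second case every $x\in F'$ has $\rho_{\mcM}(G+\langle x\rangle)\le\rho_{\mcM}(F')=\rho_{\mcM}(G)$, so $x\in\cl_{\mcM}(G)=G$ and $G=F'$. Thus $F'$ covers $F$ and $F+\langle e\rangle\le F'$. For uniqueness, if $F''$ is another flat covering $F$ with $F+\langle e\rangle\le F''$, then monotonicity gives $F'=\cl_{\mcM}(F+\langle e\rangle)\le\cl_{\mcM}(F'')=F''$; since $e\in F'\setminus F$ we have $F\subsetneq F'\le F''$, so the covering property of $F''$ forces $F'=F''$.

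For the final claim, $\rho_{\mcM}(V)=\rho_{\mcM}(\cl_{\mcM}(V))$ is already part of the lemma, and $\cl_{\mcM}(V)=\bigcap\{F\in\F_{\mcM}:V\le F\}$ depends only on $\F_{\mcM}$, so it suffices to identify $\rho_{\mcM}$ on $\F_{\mcM}$ with the height $h$. The bottom flat is $\cl_{\mcM}(\{0\})$, of rank $0$ by (qR1). Along any saturated chain of flats $F_0\le F_1\le\cdots$ with $F_{i+1}$ covering $F_i$, choose $e\in F_{i+1}\setminus F_i$; by the (qF3) argument $\cl_{\mcM}(F_i+\langle e\rangle)$ is a flat covering $F_i$, and it lies in $\cl_{\mcM}(F_{i+1})=F_{i+1}$, hence equals $F_{i+1}$, so $\rho_{\mcM}(F_{i+1})=\rho_{\mcM}(F_i)+1$. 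This shows $\F_{\mcM}$ is graded and $\rho_{\mcM}(F)=h(F)$ for all flats $F$, hence $\rho_{\mcM}(V)=h(\cl_{\mcM}(V))$.

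The only genuinely non-formal point is the closure-operator lemma --- in particular the fact that $\cl_{\mcM}(V)$ is a subspace and that $\rho_{\mcM}$ is constant on it; once that is secured, (qF1)--(qF3) and the height identity are pure bookkeeping with the rank axioms, mirroring the classical matroid argument for the analogous statement about $\F_M$.
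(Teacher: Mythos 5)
The paper offers no proof of this proposition: it is imported verbatim from \cite[Thm 48]{BCJ21}. So there is nothing internal to compare against, and your argument has to stand on its own. It does. You follow the standard cryptomorphism route: first show $\cl_{\mcM}$ is a genuine closure operator whose closed sets are precisely the flats, then derive (qF1)--(qF3) and the identity $\rho_{\mcM}(V)=h(\cl_{\mcM}(V))$ as consequences. The key computation (two elements $v,w\in\cl_{\mcM}(V)$ force $\rho_{\mcM}(V+\langle v,w\rangle)=\rho_{\mcM}(V)$ via (qR3) and (qR2), hence $\cl_{\mcM}(V)$ is a subspace on which $\rho_{\mcM}$ is constant) is correct, as are the cover and uniqueness arguments for (qF3) and the grading argument identifying $\rho_{\mcM}$ with the height function on flats.

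One imprecision worth fixing: monotonicity of $\cl_{\mcM}$ is not ``immediate from the definition and (qR2).'' To get $v\in\cl_{\mcM}(V)\Rightarrow v\in\cl_{\mcM}(W)$ for $V\le W$ you need submodularity once more: apply (qR3) to $W$ and $V+\langle v\rangle$, whose sum is $W+\langle v\rangle$ and whose intersection contains $V$, to conclude $\rho_{\mcM}(W+\langle v\rangle)\le\rho_{\mcM}(W)$. This is the same trick as your key computation, so it is a one-line repair, but as written the step is asserted rather than proved, and it is load-bearing: you invoke monotonicity of $\cl_{\mcM}$ both in (qF2) and in the uniqueness half of (qF3).
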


For both matroids and $q$-matroids, there exists a notion of duality, defined respectively with complements of sets and orthogonal spaces.

\begin{definition}\label{dual matroid}
Let $M = (S, r)$ be a matroid. The \emph{dual matroid} $M^* = (S, r^*)$ is defined via the rank function 
$$r^*(A) = |A| - r(S) + r(S - A).$$
\end{definition}

Duality for $q$-matroids depends on a choice of non-degenerate symmetric bilinear form (NSBF). 
Let $E$ be a vector space over $\FF_q$ and $\langle \cdot , \cdot \rangle : E \times E \rightarrow \FF_q$ be a NSBF.  The orthogonal space of $V \leq E$ w.r.t $\langle \cdot , \cdot \rangle$ is the space 
$V^{\perp} := \{ w \in E \, : \, \langle w, v \rangle = 0 \, \textup{ for all } v \in V\}.$

\begin{definition}\label{dual $q$-matroid}
Let $\mcM = (E, \rho)$ be a $q$-matroid and $\langle \cdot , \cdot \rangle$ be a NSBF on $E$. The \emph{dual $q$-matroid} $\mcM^* = (E, \rho^*)$,  w.r.t the chosen NSBF, is defined via the rank function
$$\rho^*(V) = \dim(V) - \rho(E) + \rho(V^{\perp}).$$
\end{definition}

It was shown in \cite[Thm 2.8]{GLJ21qpolymatroids} that given two NSBFs $\langle \cdot , \cdot \rangle_1$, $\langle \cdot , \cdot \rangle_2$, the respective dual $q$-matroids $\mcM^{*_1}$ and $\mcM^{*_2}$ of $\mcM$ are equivalent. For both matroids and $q$-matroids, an element $v$ of the groundset, respectively groundspace, is a \emph{coloop} if $e$ is loop in the dual matroid, respectively dual $q$-matroid.

We now define the operations of deletion and contraction for matroids and $q$-matroids.

\begin{definition}
Let $M = (S,r)$ be a matroid and let $A \subseteq S$.
\begin{itemize}
    \item The matroid $M\setminus A = (S - A, r_{M\setminus A}),$ where $r_{M\setminus A}(B) = r(B)$ for all $B \subseteq S-A$, is called the \emph{deletion} of $A$ from $M$. 
    \item The matroid $M/A = (S -  A, r_{M/A}),$  where $r_{M/A}(B) = r(B \cup A) - r(A)$ for all $B \subseteq S-A$, is called the \emph{contraction} of $A$ from $M$.
\end{itemize}
\end{definition}

The following well-known facts about the deletion and contraction of matroids will be needed. Refer  to \cite[Prop 3.1.25]{oxley} for a proof.

\begin{proposition}\label{del cont commute}
Let $M = (S, r)$ be a matroid. Let $A, B \subseteq S$ be disjoint sets. Then 
\begin{itemize}
    \item $(M \setminus A) \setminus B = M \setminus (A \cup B) = (M \setminus B) \setminus A,$
    \item $(M/A)/B = M/(A \cup B) = (M / B)/ A,$
    \item $(M \setminus A) / B = (M / B) \setminus A.$
\end{itemize}
\end{proposition}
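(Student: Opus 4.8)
The plan is to verify each of the three pairs of identities directly from the definitions of deletion and contraction. The key structural observation is that, because $A$ and $B$ are disjoint, we have $B \subseteq S - A$ and $A \subseteq S - B$, so all the iterated operations are well-defined; moreover the groundsets satisfy $(S - A) - B = (S - B) - A = S - (A \cup B)$. Hence in each identity both sides are matroids on the \emph{same} groundset, and it suffices to check that their rank functions agree on an arbitrary $C \subseteq S - (A \cup B)$. No appeal to the rank axioms (R1)--(R3) is needed beyond the already-known fact that deletions and contractions of matroids are again matroids.

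For the deletion identities I would simply unwind definitions: for $C \subseteq S - (A \cup B)$,
\[
r_{(M \setminus A) \setminus B}(C) = r_{M \setminus A}(C) = r(C) = r_{M \setminus (A \cup B)}(C),
\]
and interchanging $A$ and $B$ gives the remaining equality. For the contraction identities the computation is only slightly longer: for $C \subseteq S - (A \cup B)$,
\[
r_{(M/A)/B}(C) = r_{M/A}(C \cup B) - r_{M/A}(B) = \bigl(r(C \cup B \cup A) - r(A)\bigr) - \bigl(r(B \cup A) - r(A)\bigr) = r(C \cup A \cup B) - r(A \cup B),
\]
which is exactly $r_{M/(A \cup B)}(C)$; since $A \cup B = B \cup A$, the symmetric statement $(M/B)/A = M/(A \cup B)$ follows from the identical computation with the roles of $A$ and $B$ swapped.

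For the mixed identity I would again evaluate both sides on $C \subseteq S - (A \cup B)$, using $C \cup B \subseteq S - A$:
\[
r_{(M \setminus A)/B}(C) = r_{M \setminus A}(C \cup B) - r_{M \setminus A}(B) = r(C \cup B) - r(B) = r_{M/B}(C) = r_{(M/B) \setminus A}(C).
\]
The only place where any care is required --- and the closest thing to an obstacle in this otherwise entirely routine argument --- is the set-theoretic bookkeeping: at each step one must check that the two sides have the same groundset and that every rank-function expression is applied only to a subset on which it is defined (for instance, that $C \cup B$ lies in $S - A$ in the last display). All of these facts follow immediately from the disjointness of $A$ and $B$.
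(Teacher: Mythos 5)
Your proof is correct: the paper itself gives no argument for this proposition, deferring to Oxley, and your direct verification that both sides of each identity share the groundset $S-(A\cup B)$ and have equal rank functions is exactly the standard argument found there. The bookkeeping you flag (e.g.\ that $C\cup B\subseteq S-A$ so every rank expression is legitimately evaluated) is handled correctly, so nothing is missing.
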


To avoid the surplus of parenthesis, we omit them if there is no risk of confusion. 


At this point we make a brief comment about notation. The notation $\setminus$  always denotes the deletion operation and set exclusion is denoted by  the $-$ sign. However, the notation $/$ is used to denote both the contraction of ($q$-)matroids and quotient space (i.e $E/V$). The reader should therefore use context in order to differentiate between the latter two.

For $q$-matroids, the operations of deletion and contraction are defined in an analogous way.
\begin{definition}
Let $\mcM = (E, \rho)$ be a $q$-matroid, let $V \leq E$ and fix a NSBF on $E$. Furthermore let $\pi : E \rightarrow E/V$ be the canonical projection.
\begin{itemize}
    \item The $q$-matroid $M \setminus V = (V^{\perp}, \rho_{M\setminus V})$, where $\rho_{M\setminus V}(W) = \rho(W)$ for all $W \leq V^{\perp}$ is called the \emph{deletion} of $V$ from $\mcM$.
    \item The $q$-matroid $M/V = (E/V, \rho_{M/V})$, where $\rho_{M/V}(W) = \rho(\pi^{-1}(W)) - \rho(V)$ for all $W \leq E/W$, is called the \emph{contraction} of $V$ from $\mcM$.
\end{itemize}
\end{definition}

It is worth noting that for both matroids and $q$-matroids, deletion and contraction are dual operations, i.e. $\mcM^* \setminus V \cong (\mcM / V)^*$ (equality rather than equivalence holds for matroids only). A proof of this fact for $q$-matroids can be found in \cite[Thm 5.3]{GLJ21qpolymatroids} and in \cite[Sect. 3]{oxley} for matroids.
A matroid $N$ (resp.\ $q$-matroid $\mcN$) is a \emph{minor} of $M$ (resp.\ $\mcM$) if it can be obtained from $M$ (resp.\ $\mcM$) by a sequence of deletion and contraction. 

For both matroids and $q$-matroids, the flats of a contraction can be characterized in terms of the flats of the original ($q$-)matroid.  

\begin{proposition}\label{contraction loop}
Let $M = (S, r_M)$ be a matroid, $\mcM = (E, \rho_{\mcM})$ be a $q$-matroid and $\F_M$, $\F_{\mcM}$ their respective lattice of flats. Let $A \subseteq S$,  $V \leq E$ and consider $M/A$ and $\mcM/V$. Then 
\begin{itemize}
    \item[(1)] $\F_{M/A} = \{F \subseteq S - A \, : \, F \cup A \in \F_M\}$,
    \item[(2)] $\F_{\mcM/V} = \{F \, : \, \pi^{-1}(F)  \in \F_{\mcM}\}$, where $\pi : E \rightarrow E/V$.
\end{itemize}
Furthermore $A$ (resp.\ $V$) is a flat of $M$ (resp.\ $\mcM$) if and only if $M/A$ (resp.\ $\mcM/V$) is loopless.
\end{proposition}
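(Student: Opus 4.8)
The plan is to reduce parts (1) and (2) to unwinding the definition of a flat together with the rank formula for a contraction, and then to deduce the loopless criterion by applying the resulting characterizations to the bottom element of the lattice of flats (namely $\emptyset$ in the matroid case and $\{0\}$ in the $q$-matroid case).

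For (1), fix $F \subseteq S-A$. By definition $F \in \F_{M/A}$ iff $r_{M/A}(F\cup v) > r_{M/A}(F)$ for every $v \in (S-A)-F$. Substituting $r_{M/A}(B) = r_M(B\cup A) - r_M(A)$ turns this into $r_M(F\cup A\cup v) > r_M(F\cup A)$, and since $(S-A)-F = S-(A\cup F)$, letting $v$ range over $(S-A)-F$ is the same as letting it range over all elements not in $F\cup A$; that is exactly the statement $F\cup A \in \F_M$. I do not expect any obstacle here. For (2) the argument has the same shape, but it uses the correspondence theorem for quotient spaces: $W \mapsto \pi^{-1}(W)$ is an order isomorphism $\mcL(E/V) \to [V,E]$ that preserves sums and intersections, with $\pi^{-1}(\langle\bar x\rangle) = \langle x\rangle + V$ for any lift $x$ of $\bar x$; combined with $V \leq \pi^{-1}(W)$ this gives $\pi^{-1}(W+\langle\bar x\rangle) = \pi^{-1}(W) + \langle x\rangle$ and $\bar x \notin W \iff x \notin \pi^{-1}(W)$. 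Then, for $W \leq E/V$, unwinding $W \in \F_{\mcM/V}$ via $\rho_{\mcM/V}(\,\cdot\,) = \rho_{\mcM}(\pi^{-1}(\,\cdot\,)) - \rho_{\mcM}(V)$ and the identities above converts the flat condition for $W$ in $\mcM/V$ into $\rho_{\mcM}(\pi^{-1}(W)+\langle x\rangle) > \rho_{\mcM}(\pi^{-1}(W))$ for all $x \notin \pi^{-1}(W)$, i.e.\ $\pi^{-1}(W) \in \F_{\mcM}$. The only delicate point is the interaction of $\pi^{-1}$ with joins, which is precisely the (routine) correspondence theorem; I expect that to be the main thing to state carefully.

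For the final sentence, I would use that the smallest flat of a matroid is $\cl_M(\emptyset)$, equal to $\emptyset$ together with all loops, so $M/A$ is loopless iff $\emptyset \in \F_{M/A}$; by (1) with $F = \emptyset$ this holds iff $A = \emptyset\cup A \in \F_M$. Symmetrically, the smallest flat of a $q$-matroid is $\cl_{\mcM}(\{0\})$, equal to $\{0\}$ together with all loops, so $\mcM/V$ is loopless iff $\{0\} \in \F_{\mcM/V}$; by (2) with $W = \{0\}$ this holds iff $\pi^{-1}(\{0\}) = \ker\pi = V \in \F_{\mcM}$. Alternatively one can bypass (1)--(2): $e$ is a loop of $M/A$ iff $r_M(A\cup e) = r_M(A)$, i.e.\ $e \in \cl_M(A)$, so $M/A$ is loopless iff $\cl_M(A) = A$, i.e.\ $A \in \F_M$, and likewise $\mcM/V$ is loopless iff $\cl_{\mcM}(V) = V$.
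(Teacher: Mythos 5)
Your proposal is correct and follows essentially the same route as the paper: parts (1) and (2) by unwinding the contraction rank formula against the definition of a flat (the paper simply cites Oxley for (1) and writes out the two directions of (2) exactly as you describe), and the loopless criterion from the observation that $e$ is a loop of $M/A$ iff $e\in\cl_M(A)$, which is your stated alternative and is the paper's actual argument. Your derivation of the final sentence from (1)--(2) applied to the bottom flat ($\emptyset$, resp.\ $\{0\}$) is a slightly tidier packaging of the same fact, but not a different proof.
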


\begin{proof}
(1) is shown in \cite[Prop 3.3.7]{oxley}. For (2), first let $F \in \F_{\mcM/V}$ and consider the space $W: = \pi^{-1}(F) \leq E$. Let $x \notin W$. Then
$\rho_{\mcM}(W \oplus \langle x \rangle) = \rho_{\mcM/V}( F \oplus \langle \pi(x) \rangle) + \rho_{\mcM}(V) > \rho_{\mcM/V} (F) + \rho_{\mcM}(V) = \rho_{\mcM}(W)$, where the inequality holds because $F \in \F_{\mcM/V}$ and $\pi(x) \notin F$. Since this is true for all $x \notin W$ then $W \in \F_{\mcM}$.

Now let $F \leq E/V$ such that $\pi^{-1}(F) \in \F_{\mcM}$. Let $\langle x \rangle \leq F/V$ such that $x \notin F$. Then $\rho_{\mcM/V}(F \oplus \langle x \rangle) = \rho_{\mcM}(\pi^{-1}(F \oplus \langle x \rangle)) - \rho_{\mcM}(V) = \rho_{\mcM}(\pi^{-1}(F) + \pi^{-1} (\langle x \rangle)) - \rho_{\mcM}(V) >   \rho_{\mcM}(\pi^{-1}(F)) - \rho_{\mcM}(V) = \rho_{\mcM/V}(F)$. Once again, since this is true for all $x \notin F$ then $F \in \F_{\mcM/V}$.

We show the second part of the statement for matroids, and note the proof for $q$-matroid is analogous to it. Consider $M/A$ with $A \in \F_M$ and let $e \in S - A$. Then $r_{M/A}(e) = r_M(e \cup A) - r_M(A) > 0$ since $A$ is a flat. Since this holds for all $e \in S-A$,  then $M/A$ is loopless. Now assume $A \notin \F_M$ then $A \subsetneq \cl_M(A)$ and let $e \in \cl_M(A) - A \subseteq S - A$. Then $r_{M/A}( e) = r_M(A \cup e) - r(A) = 0$ since $e \in \cl_M(A).$ Hence $M/A$ contains a loop. 
\end{proof}

The last matroid operation we discuss is that of the single element extension by adjoining a loop, which we refer to as loop extension. 
The loop extension will play an important role in section 4 when defining maps between matroids. 
The reader can refer to \cite[Sect.~7.2]{oxley} and \cite[Chap.~8]{RotaMatroid} for proofs and a more detailed discussion of the single element extension.

\begin{proposition}\label{loop extension}
Let $M = (S, r)$ be a matroid and $\{o_M\}$ denotes a symbol disjoint from $S$. Let $S_o := S \cup \{o_M\}$ and $r_o : 2^{S_o} \rightarrow \NN_0$ be such that $r_o(A) = r(A - \{o_M\})$, for all $A \subseteq S_o$. Then $M_o := (S_o, r_o)$ is a matroid, and $\{o_M\}$ is a loop in $M_o$. Furthermore $M_o$ is called a \emph{loop extension of $M$}.
\end{proposition}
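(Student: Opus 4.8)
The plan is to verify directly that $r_o$ satisfies the three rank axioms (R1)--(R3) and then to observe that $\{o_M\}$ is a loop. The single fact doing all the work is that the relevant set operations commute with deletion of the symbol $o_M$: for any $A, B \subseteq S_o$ we have $(A \cup B) - \{o_M\} = (A - \{o_M\}) \cup (B - \{o_M\})$, $(A \cap B) - \{o_M\} = (A - \{o_M\}) \cap (B - \{o_M\})$, and $A \subseteq B$ implies $A - \{o_M\} \subseteq B - \{o_M\}$. Moreover $A - \{o_M\} \subseteq S$, so each of these reduced sets lies in $2^S$ and the axioms for the rank function $r$ of $M$ can legitimately be applied to them.

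First I would check (R1): $0 \leq r(A - \{o_M\}) \leq |A - \{o_M\}| \leq |A|$, where the first two inequalities come from (R1) for $M$ and the last from $|A - \{o_M\}| \leq |A|$; thus $0 \leq r_o(A) \leq |A|$. For (R2), if $A \subseteq B \subseteq S_o$ then $A - \{o_M\} \subseteq B - \{o_M\}$, whence $r_o(A) = r(A - \{o_M\}) \leq r(B - \{o_M\}) = r_o(B)$ by monotonicity of $r$. For (R3), I would apply submodularity of $r$ to the pair $A - \{o_M\}$ and $B - \{o_M\}$ and then rewrite the resulting unions and intersections using the two identities above, obtaining $r_o(A \cup B) + r_o(A \cap B) \leq r_o(A) + r_o(B)$. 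This establishes that $M_o = (S_o, r_o)$ is a matroid.

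Finally, to see that $o_M$ is a loop in $M_o$, compute $r_o(\{o_M\}) = r(\{o_M\} - \{o_M\}) = r(\emptyset)$, and $r(\emptyset) = 0$ by (R1) for $M$; hence $r_o(\{o_M\}) = 0$. I do not anticipate any genuine obstacle: the statement is a bookkeeping exercise, and the only point requiring a little care is to keep track that each set to which an axiom of $r$ is applied is of the form $A - \{o_M\} \subseteq S$, so that the hypotheses on $M$ are being invoked correctly. (One could also deduce the proposition by exhibiting $M_o$ as a particular single-element extension of $M$ in the sense referenced above, but the direct verification is shorter.)
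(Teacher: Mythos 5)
Your verification is correct: the three identities relating $\cup$, $\cap$, $\subseteq$ with removal of $o_M$ are exactly what is needed, and each rank axiom for $r_o$ reduces cleanly to the corresponding axiom for $r$, with $r_o(\{o_M\}) = r(\emptyset) = 0$ giving the loop. The paper itself gives no proof (it defers to the cited references on single-element extensions), and your direct axiom check is the standard argument one would find there, so there is nothing to flag.
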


The subscript of the added loop may be omitted if it is clear from context in which matroid the loop is contained. The next proposition relates the flats $\F_{M_o}$ and $\F_M$. Furthermore, we recall that two lattices are \emph{isomorphic} (denoted by $\cong$) if there exists an order preserving bijection between the lattices that preserves meets and joins.

\begin{proposition}\label{flat loop extension}
Let $M$ be a matroid,  $M_o$  a loop extension of $M$, and $\F_M$, $\F_{M_o}$ their respective collection of flats. Then
$$\F_{M_o} = \{ F \cup \{o\} \, : \, F \in \F_M\}.$$
and $\F_M \cong \F_{M_o}$ as lattices.
\end{proposition}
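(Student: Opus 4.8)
The plan is to verify the displayed set equality directly from the definition of a flat, and then to promote it to a lattice isomorphism by exhibiting the obvious candidate map $F \mapsto F \cup \{o\}$ and checking it preserves order, meets and joins. Since $r_o(A) = r(A - \{o\})$ by Proposition~\ref{loop extension}, the closure operator in $M_o$ is easy to compute: for $A \subseteq S_o$ we have $\cl_{M_o}(A) = \cl_M(A - \{o\}) \cup \{o\}$, because $r_o(A \cup v) = r_o(A)$ reads as $r((A-\{o\}) \cup (v - \{o\})) = r(A - \{o\})$, and $v = o$ always satisfies this. In particular every flat of $M_o$ contains the loop $o$, and it is closed iff its trace on $S$ is closed in $M$.

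First I would prove $\subseteq$: if $G \in \F_{M_o}$, then $o \in G$ (since $o$ is a loop, $r_o(G \cup o) = r_o(G)$, so $o \in \cl_{M_o}(G) = G$), and writing $F := G - \{o\} = G \cap S$, for any $v \in S - F$ we have $r_o(G \cup v) > r_o(G)$, i.e. $r(F \cup v) > r(F)$; hence $F \in \F_M$ and $G = F \cup \{o\}$. Conversely, for $\supseteq$: given $F \in \F_M$, set $G := F \cup \{o\}$; for any $v \in S_o - G$ necessarily $v \in S - F$, so $r_o(G \cup v) = r(F \cup v) > r(F) = r_o(G)$, giving $G \in \F_{M_o}$. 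This establishes the displayed equality.

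For the lattice isomorphism, I would take $\varphi : \F_M \to \F_{M_o}$, $F \mapsto F \cup \{o\}$. By the set equality just proved, $\varphi$ is a well-defined bijection with inverse $G \mapsto G - \{o\}$, and it is clearly order-preserving in both directions, hence a poset isomorphism; since a poset isomorphism between lattices automatically preserves meets and joins, we are done. If one prefers to check this by hand: meets in both lattices are intersections, and $(F_1 \cap F_2) \cup \{o\} = (F_1 \cup \{o\}) \cap (F_2 \cup \{o\})$; for joins, $\varphi(F_1 \vee F_2) = \cl_M(F_1 \cup F_2) \cup \{o\} = \cl_{M_o}((F_1 \cup \{o\}) \cup (F_2 \cup \{o\})) = \varphi(F_1) \vee \varphi(F_2)$ using the closure identity above.

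There is no serious obstacle here; the only point requiring a little care is the computation of $\cl_{M_o}$ (equivalently, that every flat of $M_o$ contains $o$ and that being a flat is detected by the trace on $S$), which follows immediately from $r_o(A) = r(A - \{o\})$. Everything else is bookkeeping, and invoking "poset isomorphism between lattices preserves meet and join" avoids even the join computation.
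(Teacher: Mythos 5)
Your proof is correct. The paper itself states this proposition without proof (deferring to the cited references on single-element extensions), and your self-contained argument is sound: the key computation $\cl_{M_o}(A) = \cl_M(A-\{o\})\cup\{o\}$ follows correctly from $r_o(A)=r(A-\{o\})$, both inclusions of the set equality check out against the paper's definition of a flat, and the passage from order-isomorphism to lattice isomorphism is standard (and you verify it by hand anyway).
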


\begin{remark}\label{loop/emptyset}
Note that $M_o \setminus \{o\}  = M$. This deletion can be seen as identifying the element $\{o\}$ with the empty set of $M$, and does not change the overall structure of the matroid. \end{remark}

\section{The Projectivization Matroid}

In \cite{JPV21}, Johnsen and co-authors showed that a $q$-matroid $\mcM$ with groundspace $E$ induces a matroid $P(\mcM)$ with groundset the projective space of $E$. This induced matroid, called the projectivization matroid of $\mcM$ turns out to be an interesting object to study. In fact, it was shown in that same paper, that the projectivization preserves the flat structure of $\mcM$. It therefore becomes a useful tool when studying properties of $q$-matroids that depend only on flats.

For completeness, we reintroduce the construction of the projectivization matroid.
The following notation will be used. Given a finite dimensional vector space $E$ over $\FF_q$, let $\PE := \{ \langle v \rangle_{\FF_q} \, : v \in E - \{0\}\}$ be the \emph{projective space of $E$}. 
The map, $\hat{P} : (E - \{0\}) \rightarrow  \PE  \, , \, v \mapsto \langle v \rangle_{\FF_q}$ induces a lattice map $P: \mcL(E) \rightarrow 2^{\PE}$, where $P(\{0\}) = \emptyset$ and $P(V) = \{\hat{P}(v) \, : \, v \in V -\{0\}\} = \{P(\langle v \rangle) \, : \, v \in V- \{0\}\}$
 for $V \leq E$. 
We call the lattice map $P$  the \emph{projectivization map}. Usually, $\hat{P}$ is called the projectivization map, however for our purposes, it is more convenient to consider the projectivization as a lattice map. Note that $P$ is inclusion preserving and that $P(V \cap W) = P(V) \cap P(W)$ for all $V, W \in \mcL(E)$.
For any $S \subseteq \PE$ let $P^{-1}(S) := \{v \in E \, : \, \hat{P}(v) \in S\} = \{v \in E \, : \, P(\langle v \rangle) \in S\}$. Note that $(P^{-1} \circ P)(V) = V$ for all $V \leq E$.
Finally let $\langle S \rangle := \langle P^{-1}(S) \rangle_{\FF_q}$ for any $S \subseteq \PE$. We say $S \subseteq \PE$ \emph{contains a basis} of $E$ if $\langle S \rangle = E$.
We can now introduce the projectivization matroid.

\begin{theorem}(\cite[Def.14, Prop. 15]{JPV21})\label{$q$-matroid/matroid}
Let $\mcM = (E, \rho)$ be a $q$-matroid and let $r: 2^{\PE} \rightarrow \NN_0$ such that for all $S \subseteq \PE, $
$$r(S) = \rho(\langle S \rangle).$$
Then $P(\mcM) := (\PE, r)$ is a matroid, and is called the \emph{projectivization matroid of $\mcM$}.
\end{theorem}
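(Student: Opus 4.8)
The plan is to verify the three matroid axioms (R1)--(R3) for the function $r(S) = \rho(\langle S \rangle)$ on $2^{\PE}$, using the corresponding $q$-matroid axioms (qR1)--(qR3) together with the basic properties of the projectivization map $P$ recorded in the text, namely that $P$ is inclusion preserving, that $P(V \cap W) = P(V) \cap P(W)$, and that $(P^{-1} \circ P)(V) = V$.

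First I would establish a dictionary between set-operations on $\PE$ and subspace-operations on $E$ via $S \mapsto \langle S \rangle$. The key observations are: (i) if $S \subseteq S'$ then $\langle S \rangle \leq \langle S' \rangle$ (monotonicity of span); (ii) $\langle S \cup S' \rangle = \langle S \rangle + \langle S' \rangle$, since $P^{-1}(S \cup S') = P^{-1}(S) \cup P^{-1}(S')$ and the span of a union of sets is the sum of their spans; and (iii) $\langle S \cap S' \rangle \leq \langle S \rangle \cap \langle S' \rangle$, which follows from (i) applied to the two inclusions $S \cap S' \subseteq S$ and $S \cap S' \subseteq S'$. I should note that the reverse inclusion in (iii) generally fails, but only the stated inequality is needed. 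I would also record the elementary fact $\dim \langle S \rangle \leq |S|$, since $S$ (viewed through $\hat P^{-1}$, picking one representative per point) spans $\langle S \rangle$ with at most $|S|$ vectors.

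Then the three axioms follow quickly. For (R1): $0 \leq \rho(\langle S \rangle)$ by (qR1), and $\rho(\langle S \rangle) \leq \dim \langle S \rangle \leq |S|$ by (qR1) and the dimension bound above, so $0 \leq r(S) \leq |S|$. For (R2): if $S \subseteq S'$ then $\langle S \rangle \leq \langle S' \rangle$ by (i), hence $r(S) = \rho(\langle S \rangle) \leq \rho(\langle S' \rangle) = r(S')$ by (qR2). For (R3): using (ii), $\langle S \cup S' \rangle = \langle S \rangle + \langle S' \rangle$, and using (iii) together with (qR2), $\rho(\langle S \cap S' \rangle) \leq \rho(\langle S \rangle \cap \langle S' \rangle)$; therefore
$$
r(S \cup S') + r(S \cap S') = \rho(\langle S \rangle + \langle S' \rangle) + \rho(\langle S \cap S' \rangle) \leq \rho(\langle S \rangle + \langle S' \rangle) + \rho(\langle S \rangle \cap \langle S' \rangle) \leq \rho(\langle S \rangle) + \rho(\langle S' \rangle) = r(S) + r(S'),
$$
where the last inequality is (qR3). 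This gives submodularity.

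The only subtle point — and what I expect to be the main (minor) obstacle — is the treatment of the meet: the naive hope that $\langle S \cap S' \rangle = \langle S \rangle \cap \langle S' \rangle$ is false (two distinct projective points can have spans whose intersection is nonzero only if the points coincide, but three or more points on a common line give a counterexample: two independent points $p_1, p_2$ and a third point $p_3$ on the line through them, with $S = \{p_1, p_3\}$, $S' = \{p_2, p_3\}$), so one must be careful to use only the inequality (iii) and to combine it with monotonicity (qR2) of $\rho$ before invoking submodularity. Once that is handled correctly, the proof is a routine translation of the $q$-matroid axioms through $P$. I would also remark at the end that the degenerate case $S = \emptyset$ is covered since $\langle \emptyset \rangle = \langle \{0\} \rangle = \{0\}$ and $\rho(\{0\}) = 0$ by (qR1), consistent with $r(\emptyset) = 0$.
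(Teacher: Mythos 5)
Your proof is correct. The paper itself gives no argument for this statement — it is quoted directly from \cite[Def.~14, Prop.~15]{JPV21} — so there is nothing internal to compare against, but your direct verification of (R1)--(R3) is exactly the standard argument one would expect there: the dictionary $\langle S\cup S'\rangle=\langle S\rangle+\langle S'\rangle$ and $\langle S\cap S'\rangle\leq\langle S\rangle\cap\langle S'\rangle$ combined with (qR2) before invoking (qR3) is the one genuinely non-automatic step, and you handle it (and the failure of equality in the intersection) correctly.
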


We now turn towards the relation between the flats of a $q$-matroid $\mcM$ and those of its projectivization matroid $P(\mcM)$. In the following result, the meet and join refers to those of the lattice of flats defined in Section 1. 

\begin{lemma}\cite[Lem.16, Prop.21]{JPV21}\label{flats properties}
Let $\mcM$ be a $q$-matroid, $P(\mcM)$ its projectivization matroid,  and $\F_{\mcM}$, $\F_{P(\mcM)}$ their respective lattice of flats. Furthermore, let $P(\F_{\mcM}):= \{P(F) \, : \, F \in \F_{\mcM}\}$. Then the following hold:
\begin{itemize}
    \item[1)] $\F_{P(\mcM)} = P(\F_{\mcM})$.
    
    \item[2)] $P(F_1 \vee F_2) = P(F_1) \vee P(F_2)$ and $P(F_1 \wedge F_2) = P(F_1) \wedge P(F_2)$,  for all $F_1, F_2 \in \F_{\mcM}$.
    
\end{itemize}

Therefore $\F_{P(\mcM)} \cong \F_{\mcM}$ as lattices.
\end{lemma}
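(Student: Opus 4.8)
The plan is to verify the three items of Lemma~\ref{flats properties} in sequence, using the description of the projectivization matroid rank function $r(S) = \rho(\langle S\rangle)$ together with the flat axioms from Propositions~\ref{flats matroid} and~\ref{flat $q$-matroid}, and then deduce the lattice isomorphism from items 1) and 2).

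For item 1), I would argue both inclusions. Given $F \in \F_{\mcM}$, I claim $P(F) \in \F_{P(\mcM)}$: for any $\langle v\rangle \in \PE \setminus P(F)$, i.e. $v \notin F$, I compute $r(P(F) \cup \langle v\rangle) = \rho(\langle P^{-1}(P(F)) \cup \{v\}\rangle) = \rho(F + \langle v\rangle) > \rho(F) = r(P(F))$, using $(P^{-1}\circ P)(F) = F$ and the fact that $F$ is a flat of $\mcM$. Conversely, given $G \in \F_{P(\mcM)}$, set $F := \langle G\rangle \leq E$; one checks $P(F) = G$ because $G$ is a flat (so $G$ already contains every projective point in the span $\langle G\rangle$ — here I would use that $r(G \cup \langle v\rangle) = r(G)$ forces $\langle v\rangle \in G$, and $\langle v\rangle \subseteq \langle G\rangle$ means exactly $r(G\cup\langle v\rangle) = \rho(\langle G\rangle) = r(G)$) and then $F$ is a flat of $\mcM$ by a symmetric computation. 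This gives $\F_{P(\mcM)} = P(\F_{\mcM})$.

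For item 2), the meet is easy: $P(F_1 \wedge F_2) = P(F_1 \cap F_2) = P(F_1) \cap P(F_2) = P(F_1)\wedge P(F_2)$, using the noted property $P(V\cap W) = P(V)\cap P(W)$ and the definition of meet in both lattices of flats. For the join, $F_1 \vee F_2 = \cl_{\mcM}(F_1 + F_2)$ while $P(F_1)\vee P(F_2) = \cl_{P(\mcM)}(P(F_1)\cup P(F_2))$, so I need $P(\cl_{\mcM}(F_1+F_2)) = \cl_{P(\mcM)}(P(F_1)\cup P(F_2))$. More generally I would show $P(\cl_{\mcM}(V)) = \cl_{P(\mcM)}(P(V))$ for all $V \leq E$: unwinding, $\langle v\rangle \in \cl_{P(\mcM)}(P(V))$ iff $r(P(V)\cup\langle v\rangle) = r(P(V))$ iff $\rho(V + \langle v\rangle) = \rho(V)$ iff $v \in \cl_{\mcM}(V)$ iff $\langle v\rangle \in P(\cl_{\mcM}(V))$, and $\langle P(V)\rangle = V$ handles the span appearing inside $r$. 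Applying this with $V = F_1 + F_2$ and noting $P(F_1+F_2) = \langle P(F_1)\cup P(F_2)\rangle$ — hence the same closure — finishes it.

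Finally, for the isomorphism: item 1) says $P$ restricts to a surjection $\F_{\mcM} \to \F_{P(\mcM)}$, and it is injective since $P^{-1}\circ P = \id$ on $\mcL(E)$; item 2) says this bijection preserves meets and joins, so it is a lattice isomorphism. The main obstacle is the join computation in item 2) — specifically being careful that the closure operator $\cl_{P(\mcM)}$ applied to $P(F_1)\cup P(F_2)$ genuinely matches $P$ of $\cl_{\mcM}(F_1+F_2)$ rather than some smaller or larger set; this hinges on the identity $\langle P(V)\rangle = V$ and on translating the defining equality $r(S\cup\langle v\rangle) = r(S)$ through $r = \rho\circ\langle\cdot\rangle$. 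Everything else is a routine application of the flat axioms and the elementary properties of $P$ recorded just before Theorem~\ref{$q$-matroid/matroid}.
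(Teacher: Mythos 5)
The paper does not prove this lemma at all---it imports it verbatim from \cite[Lem.~16, Prop.~21]{JPV21}---so there is no in-paper argument to compare against. Your proof is correct and self-contained: the two inclusions for item 1), the reduction of the join computation to the identity $P(\cl_{\mcM}(V)) = \cl_{P(\mcM)}(P(V))$, and the bijectivity of $P$ on flats all check out against the definitions $r(S) = \rho(\langle S\rangle)$ and $\langle P(V)\rangle = V$. The only blemish is notational: you write $P(F_1+F_2) = \langle P(F_1)\cup P(F_2)\rangle$, equating a set of projective points with a subspace; what you actually use (and what is true) is that $P(F_1+F_2)$ and $P(F_1)\cup P(F_2)$ have the same span $F_1+F_2$, and that $\cl_{P(\mcM)}$ of a set depends only on the span of that set, so the two closures coincide.
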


The next result shows when a matroid with groundset $\PE$ is the projectivization matroid of a $q$-matroid with groundspace $E$.

\begin{theorem}\label{flat iso}
Let $M = (\PE, r)$ be a matroid and $\F_M$ its lattice of flats. Furthermore let $P^{-1}(\F_M) := \{P^{-1}(F) \cup \{0\} \, : \, F \in \F_M\}$. If $P^{-1}(F) \cup \{0\}$ is a subspace of $E$ for all $F \in \F_M$,  then  $\mcM = (E,  P^{-1}(\F_M))$ is $q$-matroid.
Furthermore $\F_M \cong \F_{\mcM}$.
\end{theorem}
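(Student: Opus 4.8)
The plan is to verify that $\F := P^{-1}(\F_M)$ satisfies the three flat axioms (qF1)--(qF3) of Proposition~\ref{flat $q$-matroid}, so that it defines a $q$-matroid $\mcM = (E, \F)$, and then to read off the lattice isomorphism $\F_M \cong \F_\mcM$ from the bijection $F \mapsto P^{-1}(F) \cup \{0\}$. By hypothesis every member of $\F$ is a subspace of $E$, so we only need the three axioms. For (qF1): $\PE \in \F_M$ by (F1), and $P^{-1}(\PE) \cup \{0\} = E$, so $E \in \F$. For (qF2): given $F_1, F_2 \in \F_M$, we have $F_1 \cap F_2 \in \F_M$ by (F2); since $P^{-1}$ commutes with intersections (and $\{0\}$ is in every member of $\F$), $P^{-1}(F_1 \cap F_2) \cup \{0\} = (P^{-1}(F_1) \cup \{0\}) \cap (P^{-1}(F_2) \cup \{0\})$, so $\F$ is closed under intersection.

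The substantive step is (qF3). Fix $W = P^{-1}(F) \cup \{0\} \in \F$ with $F \in \F_M$, and take $e \in E$ with $e \notin W$; then $e \neq 0$ and $\hat P(e) = \langle e\rangle \notin F$. By (F3) for $M$ there is a unique flat $F' \in \F_M$ covering $F$ with $F \cup \{\langle e\rangle\} \subseteq F'$. Set $W' := P^{-1}(F') \cup \{0\} \in \F$. Then $W \leq W'$ and $e \in W'$, so $W + \langle e\rangle \leq W'$. I must check that $W'$ \emph{covers} $W$ in the lattice $\F$, and that it is the unique such member of $\F$. Covering and uniqueness will follow once I establish that $F \mapsto P^{-1}(F) \cup \{0\}$ is an order isomorphism of the poset $\F_M$ onto the poset $\F$: the map is inclusion preserving and inclusion reflecting because $P$ and $P^{-1}$ are inclusion preserving and $(P \circ P^{-1})(S) = S$ for $S \subseteq \PE$ while $P(P^{-1}(F)\cup\{0\}) = F$; it is surjective by definition of $\F$ and injective because $P(W') = F'$ recovers $F'$ from $W'$. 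An order isomorphism of posets transports the covering relation and the (unique) "flat covering $F$ above $F \cup \{\langle e\rangle\}$" property verbatim, giving (qF3) and simultaneously the lattice isomorphism $\F_M \cong \F_\mcM$ — noting that $\F_\mcM = \F$ is exactly the collection of flats of the $q$-matroid just constructed, by the last sentence of Proposition~\ref{flat $q$-matroid}.

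The one point requiring care — and the only place the subspace hypothesis is genuinely used beyond well-definedness — is that covers in $\F_M$ must correspond to covers in $\F$ as a sublattice of $\mcL(E)$, i.e. that no intermediate \emph{flat} of $\mcM$ sneaks in between $W$ and $W'$; but since $\F = \F_\mcM$ by construction and the poset map is an isomorphism, an intermediate $W'' \in \F$ would pull back to an intermediate $F'' \in \F_M$ between $F$ and $F'$, contradicting that $F'$ covers $F$. Thus the main obstacle is not a hard argument but the bookkeeping of checking that the poset isomorphism is honest in both directions (in particular that $P(P^{-1}(F) \cup \{0\}) = F$ for $F \in \F_M$, which uses that $F$, being a flat of a loopless-on-$F$ portion, is a union of points of $\PE$ — true for any subset of $\PE$). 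Once that bijection is in hand, (qF1)--(qF3) and the lattice isomorphism all drop out, and Proposition~\ref{flat $q$-matroid} certifies that $(E, \F)$ is a $q$-matroid with $\F_\mcM \cong \F_M$.
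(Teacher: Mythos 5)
Your proposal is correct and follows essentially the same route as the paper: verify (qF1)--(qF3) for $\F = P^{-1}(\F_M)$ using that $P$ is inclusion preserving, commutes with intersections, and satisfies $P(P^{-1}(F)\cup\{0\}) = F$, with the only substantive work in (qF3). The paper checks the covering property directly by applying $P$ to a hypothetical intermediate flat, while you package the same fact as an order isomorphism transporting covers; this is a cosmetic difference, and your version has the small merit of making the uniqueness of the cover in (qF3) explicit.
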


\begin{proof}
 We show $\F:= P^{-1}(\F_M)$ is a collection of flats of a $q$-matroid by showing it satisfies ($q$F1)-($q$F3) of Proposition \ref{flat $q$-matroid}. Throughout the proof we use the fact that $\F_M$ is the collection of flats of a matroid, and hence satisfies (F1)-(F3) of Proposition \ref{flats matroid}.
Since $\F_M$ satisfies (F1), $\PE \in \F_M$, and therefore $P^{-1}(\PE) \cup \{0\} = E \in \F$. This shows (qF1).
Let $V_1 := P^{-1}(F_1)\cup \{0\}, V_2 := P^{-1}(F_2) \cup \{0\}  \in \F$. Since $F_1, F_2 \in \F_M$ then $F_1 \cap F_2 \in \F_M$.
Furthermore, $ P(V_1 \cap V_2) = P(V_1) \cap P(V_2)  = F_1 \cap F_2 \in \F_M$. Hence $V_1 \cap V_2 = P^{-1}(F_1 \cap F_2) \cup \{0\} \in \F$, showing (qF2).

Finally for (qF3), let $V := P^{-1}(F)\cup \{0\} \in \F$ and $w \notin V$. Since $P$ is inclusion preserving $P(\langle w \rangle) \notin P(V) = F$. Hence there exists a unique flat $F' \in \F_M$ covering $F$ such that $F \cup P( \langle w \rangle) \subseteq F'$. Let $V' := P^{-1}(F') \cup \{0\}$. By definition $V' \in \F$ and since $V'$ is a subspace containing $V \cup w$ then $V \oplus \langle w \rangle \leq V'$. To show $V'$ covers $V$, assume there exists $W \in \F$ such that $V \lneq W \leq V'$. Applying $P$ and using the fact that $P$ is inclusion preserving, we get $F= P(V) \subsetneq P(W) \subseteq P(V') = F'$. However because $W \in \F$ then $P(W) \in \F_M$. But $F'$ covers $F$ hence we must have that $P(W) = F'$ and therefore $W = V'$. This implies $V'$ is a cover of $V$ and shows $\F$ is the collection of flats of a $q$-matroid. 

Finally to show $\F_M$ and $\F$ are isomorphic as lattices, note that $P(\F) = \F_M$ hence by Theorem \ref{flats properties} the isomorphism follows.
\end{proof}

We now show that the lattice of flats of the $q$-matroid $\mcM$  contracted by a flat $F$ is isomorphic to the lattice of flats of $P(\mcM)/P(F)$.

\begin{theorem}\label{contraction flat iso}
Let $\mcM$ be a $q$-matroid, $P(\mcM)$ its projectivization matroid and $\F_{\mcM}, \F_{P(\mcM)}$ their respective lattice of flats. Then $\F_{\mcM/F} \cong \F_{P(\mcM)/P(F)}$ (as lattices) for any $F \in \F_{\mcM}$. 
\end{theorem}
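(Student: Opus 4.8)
The plan is to establish the lattice isomorphism $\F_{\mcM/F} \cong \F_{P(\mcM)/P(F)}$ by combining the three tools already developed in this section: the contraction-of-flats characterization (Proposition~\ref{contraction loop}), the flat-preservation of the projectivization map (Lemma~\ref{flats properties}), and the recognition criterion for projectivization matroids (Theorem~\ref{flat iso}). The key observation is that since $F \in \F_{\mcM}$, Proposition~\ref{contraction loop} tells us $\mcM/F$ is a loopless $q$-matroid with groundspace $E/F$, and likewise $P(F) \in \F_{P(\mcM)}$ by Lemma~\ref{flats properties}, so $P(\mcM)/P(F)$ is a loopless matroid with groundset $\PE - P(F)$.

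First I would identify the groundset of $P(\mcM)/P(F)$ with the projective space $\PP(E/F)$. The natural map here is $\PE - P(F) \to \PP(E/F)$ sending $\langle v \rangle \mapsto \langle v + F \rangle$; this is well-defined (a line in $E$ not contained in $F$ maps to a line in $E/F$), surjective, but \emph{not injective} — the fibers are the projectivizations of the affine subsets. So strictly the groundset of $P(\mcM)/P(F)$ is larger than $\PP(E/F)$, and I should not expect $P(\mcM/F) = P(\mcM)/P(F)$ on the nose. Instead, the cleanest route is: apply Theorem~\ref{flat iso} to the matroid $M := P(\mcM)/P(F)$ with the understanding that its groundset $\PE - P(F)$ should be re-coordinatized. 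Concretely, I would show that the map $v + F \mapsto$ (the set of lines over $v+F$ in $E$) identifies $\PP(E/F)$ with a natural quotient of $\PE - P(F)$, under which $P(\mcM)/P(F)$ becomes (equivalent to) a matroid on $\PP(E/F)$ whose preimages-of-flats are subspaces of $E/F$; Theorem~\ref{flat iso} then produces a $q$-matroid on $E/F$ whose lattice of flats is isomorphic to $\F_{P(\mcM)/P(F)}$, and I would verify this $q$-matroid is precisely $\mcM/F$ by checking the flats agree via Proposition~\ref{contraction loop}(2).

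More explicitly, the chain of lattice isomorphisms I want to assemble is
\begin{align}
\F_{\mcM/F} &\cong \{ F' \in \F_{\mcM} \, : \, F \leq F' \} \\
&\cong \{ P(F') \in \F_{P(\mcM)} \, : \, P(F) \subseteq P(F') \} \\
&\cong \F_{P(\mcM)/P(F)}.
\end{align}
The first isomorphism is Proposition~\ref{contraction loop}(2): flats of $\mcM/F$ correspond bijectively (and order-isomorphically, since $\pi^{-1}$ is inclusion preserving) to flats of $\mcM$ containing $F$, i.e. to the interval $[F, E]$ in $\F_{\mcM}$. The second is the restriction of the lattice isomorphism $\F_{\mcM} \cong \F_{P(\mcM)}$ from Lemma~\ref{flats properties}, which carries the interval $[F, E]$ isomorphically onto the interval $[P(F), \PE]$ in $\F_{P(\mcM)}$ (using that $P$ is inclusion preserving in both directions on flats). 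The third is the matroid analogue, Proposition~\ref{contraction loop}(1): flats of $P(\mcM)/P(F)$ correspond order-isomorphically to flats of $P(\mcM)$ containing $P(F)$. Composing these three order isomorphisms (and checking each preserves meets and joins, which follows since each is a bijection between sublattices that preserves the order relation and the ambient lattices are the same) gives the desired $\F_{\mcM/F} \cong \F_{P(\mcM)/P(F)}$.

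The main obstacle I anticipate is verifying that $P(\mcM)/P(F)$ really is a \emph{matroid of the form arising as a projectivization matroid} well enough to invoke the interval description cleanly — or, if one takes the more elementary route above, checking the compatibility of the correspondence in Proposition~\ref{contraction loop}(1) with set-theoretic flats when $P(F)$ need not equal $P(E') - $ (some subspace) in an obvious way. The subtlety is that $\cl$ and union interact with $P$ and $P^{-1}$ correctly: one needs $P(F') \supseteq P(F)$ iff $F' \supseteq F$ for flats, and that the contracted flat $P(F') - P(F) \subseteq \PE - P(F)$ is indeed a flat of $P(\mcM)/P(F)$ corresponding under $\pi^{-1}$ to $P(F') \in \F_{P(\mcM)}$ — this is exactly Proposition~\ref{contraction loop}(1) applied with $A = P(F)$, so it goes through, but one must be careful that the correspondence $F' \mapsto P(F') - P(F)$ is the right bookkeeping. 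Everything else is routine once the three interval descriptions are lined up; the real content is packaged in Lemma~\ref{flats properties} and Proposition~\ref{contraction loop}, so the proof should be short.
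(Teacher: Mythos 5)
Your proposal is correct and is essentially the paper's own argument: the composite map $F' \mapsto P(\pi^{-1}(F')) - P(F)$ built from Proposition~\ref{contraction loop} and Lemma~\ref{flats properties} is exactly the bijection $\psi$ the paper uses, and your observation that $P(\mcM)/P(F)$ is \emph{not} $P(\mcM/F)$ on the nose is the right caution. The only cosmetic difference is that the paper checks meet-preservation directly via $P(V_1 \cap V_2) = P(V_1) \cap P(V_2)$, whereas you note that each of the three bijections is an order isomorphism of (finite) lattices, which gives the same conclusion.
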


\begin{proof}
Throughout let $F'_1, F'_2 \in \F_{\mcM/F}$ and $V_i = \pi^{-1}(F_i')$, where $\pi : E \rightarrow E/F$. By Proposition \ref{contraction loop} and Lemma \ref{flats properties},
$F'_i \in \F_{\mcM /F} \Leftrightarrow V_i \in \F_{\mcM} \Leftrightarrow P(V_i) \in \F_{P(\mcM)} \Leftrightarrow P(V_i) - P(F) \in \F_{P(\mcM) / P(F)}$. Furthermore, $F_1' = F_2' \Leftrightarrow V_1 = V_2 \Leftrightarrow P(V_1) - P(F) = P(V_2) - P(F)$. Hence there is a one-to-one correspondence between $\F_{\mcM/F}$ and $\F_{P(\mcM)/P(F)}$ described by the map $\psi:\F_{\mcM/F} \rightarrow \F_{P(\mcM)/P(F)}$, where $\psi(F'_i) = P(V_i) - P(F)$. Since the lattices of flats are finite, to show $\psi$ is a lattice isomorphism, we need only to show $\psi$ preserves meets. Recall that the meet of flats in either lattice is the intersection of the flats.
\begin{align*} 
\psi(F_1' \cap F_2')   &= P(V_1 \cap V_2) - P(F) \\
                        &= (P(V_1) \cap P(V_2)) - P(F) \\
                        &= (P(V_1) - P(F)) \cap (P(V_2) - P(F))\\
                        &= \psi(F_1') \cap \psi(F_2'),
\end{align*}
which completes the proof
\end{proof}

The next few properties about projectivization matroids, although not difficult to prove, will be useful in following sections.

\begin{proposition}\label{loop/qloop}
Let $\mcM = (E, \rho)$ be a $q$-matroid and $P(\mcM) = (\PE, r)$ its projectivization matroid. Then $\mcM$ contains a loop if and only if $P(\mcM)$ contains a loop.
\end{proposition}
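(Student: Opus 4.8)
The plan is to unwind the definitions on both sides and exhibit a direct correspondence between loops of $\mcM$ and loops of $P(\mcM)$. Recall that a loop of $\mcM$ is a one-dimensional subspace $\langle e \rangle \leq E$ with $\rho(\langle e \rangle) = 0$, while a loop of $P(\mcM)$ is a point $p \in \PE$ with $r(p) = 0$. The key observation is that every point of $\PE$ has the form $p = \hat{P}(e) = P(\langle e \rangle)$ for some $e \in E - \{0\}$, and by the definition of the rank function of the projectivization matroid (Theorem \ref{$q$-matroid/matroid}), $r(p) = r(\{P(\langle e \rangle)\}) = \rho(\langle \{P(\langle e \rangle)\} \rangle) = \rho(\langle e \rangle)$, since $\langle P^{-1}(\{P(\langle e\rangle)\}) \rangle_{\FF_q} = \langle e \rangle$.

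With that identity in hand the proof is essentially immediate in both directions. First I would prove the forward implication: if $\mcM$ has a loop $\langle e \rangle$ with $\rho(\langle e \rangle) = 0$, then the point $p := \hat{P}(e) \in \PE$ satisfies $r(p) = \rho(\langle e \rangle) = 0$, so $p$ is a loop of $P(\mcM)$. Conversely, if $P(\mcM)$ has a loop $p \in \PE$, pick any $e \in E - \{0\}$ with $\hat{P}(e) = p$ (such $e$ exists by surjectivity of $\hat{P}$ onto $\PE$); then $\rho(\langle e \rangle) = r(p) = 0$, so $\langle e \rangle$ is a loop of $\mcM$.

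There is no real obstacle here: the only thing to be careful about is correctly computing $\langle \{p\} \rangle$ for a single point $p = P(\langle e \rangle)$, namely that $P^{-1}(\{p\}) \cup \{0\} = \langle e \rangle$ and hence $\langle \{p\} \rangle = \langle e \rangle$, which follows directly from the remark in Section 3 that $(P^{-1} \circ P)(V) = V$ for all $V \leq E$, applied to $V = \langle e \rangle$. One should also note that $P(\mcM)$ being loopless is the statement that $r(p) > 0$ for every $p \in \PE$, which by the identity $r(p) = \rho(\langle e\rangle)$ (ranging over all $e \neq 0$) is exactly the statement that $\mcM$ is loopless; so the equivalence of "contains a loop" is the contrapositive of the equivalence of "loopless," and either phrasing works.

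\begin{proof}
By Theorem \ref{$q$-matroid/matroid}, for any $e \in E - \{0\}$ the point $p := \hat{P}(e) = P(\langle e \rangle) \in \PE$ satisfies
$$r(\{p\}) = \rho(\langle \{p\} \rangle) = \rho\big(\langle P^{-1}(\{p\}) \rangle_{\FF_q}\big) = \rho(\langle e \rangle),$$
where the last equality uses $(P^{-1} \circ P)(\langle e \rangle) = \langle e \rangle$.

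Suppose $\mcM$ contains a loop, say $\langle e \rangle$ with $e \in E - \{0\}$ and $\rho(\langle e \rangle) = 0$. Then $p := \hat{P}(e) \in \PE$ satisfies $r(\{p\}) = \rho(\langle e \rangle) = 0$, so $p$ is a loop of $P(\mcM)$.

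Conversely, suppose $P(\mcM)$ contains a loop $p \in \PE$, so $r(\{p\}) = 0$. Since $\hat{P} : E - \{0\} \rightarrow \PE$ is surjective, choose $e \in E - \{0\}$ with $\hat{P}(e) = p$. Then $\rho(\langle e \rangle) = r(\{p\}) = 0$, so $\langle e \rangle$ is a loop of $\mcM$.
\end{proof}
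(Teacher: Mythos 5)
Your proof is correct and takes essentially the same approach as the paper: both rest on the single identity $r(P(\langle e\rangle)) = \rho(\langle e\rangle)$ coming from the definition of the projectivization rank function, from which the equivalence is immediate. Your version just spells out the computation of $\langle\{p\}\rangle$ and the two directions more explicitly than the paper does.
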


\begin{proof}
Let  $\langle e \rangle \leq E$ be a 1-dimensional subspace. By definition, $r(P(\langle e \rangle)) = \rho (\langle e \rangle)$. Hence $\langle e \rangle$ is a loop in $\mcM$ if and only if $P(\langle e \rangle)$ is a loop in $P(\mcM)$.
\end{proof}

\begin{proposition}\label{spanning set project}
Let $\mcM = (E, \rho)$ be a $q$-matroid and $P(\mcM)= (\PE, r)$ its projectivization matroid. Let $A \subseteq \PE$ be such that $A$ contains a basis of $E$. Then $r(A) = r(\PE)$. 
\end{proposition}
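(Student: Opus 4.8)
The plan is to unwind the definition of the rank function $r$ of the projectivization matroid $P(\mcM)$ as given in Theorem \ref{$q$-matroid/matroid}, namely $r(S) = \rho(\langle S \rangle)$ for every $S \subseteq \PE$, where $\langle S \rangle = \langle P^{-1}(S)\rangle_{\FF_q}$. First I would invoke the hypothesis: saying that $A$ contains a basis of $E$ means precisely $\langle A \rangle = E$, so $r(A) = \rho(\langle A \rangle) = \rho(E)$. Next I would compute $r(\PE)$ directly. Since $P^{-1}(\PE) = E - \{0\}$ and this set spans $E$ (as $\dim E \geq 1$, every vector of $E$ lies in the span of the nonzero ones), we get $\langle \PE \rangle = \langle E - \{0\}\rangle_{\FF_q} = E$, hence $r(\PE) = \rho(\langle \PE \rangle) = \rho(E)$.

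Combining the two equalities yields $r(A) = \rho(E) = r(\PE)$, which is the claim. There is no substantial obstacle: the statement is essentially immediate from the formula defining $r$ in terms of $\rho$ and $\langle \cdot \rangle$, and the only point meriting a word of justification is that $\PE$ itself spans $E$, i.e.\ that $\langle \PE \rangle = E$, which follows at once from $P^{-1}(\PE) = E - \{0\}$. (One could also phrase this more slickly via monotonicity of $r$: $A \subseteq \PE$ gives $r(A) \leq r(\PE)$, while $\langle A \rangle = E = \langle \PE \rangle$ gives the reverse inequality $r(A) = \rho(E) \geq r(\PE)$; but the direct computation above already gives equality outright.)
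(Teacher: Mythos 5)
Your proof is correct and follows exactly the paper's argument: both unwind the definition $r(S) = \rho(\langle S\rangle)$ to get $r(A) = \rho(\langle A\rangle) = \rho(E) = r(\PE)$. Your extra remark justifying $\langle \PE\rangle = E$ is a fine (if routine) bit of added care that the paper leaves implicit.
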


\begin{proof}
Since $A$ contains a basis of $E$ then $\langle A \rangle = E$. Hence $r(A) = \rho(\langle A \rangle) = \rho(E) = r(\PE)$.
\end{proof}

We conclude the section by studying the relation between minors of a $q$-matroid and minors of its projectivization matroid. To do so we introduce the following notation. 

\begin{notation}\label{notation}
Let $V \leq E$. 
\begin{itemize}
    \item $\mcQ_V := \{ \langle w \rangle \in \PE \, : \, \langle w \rangle \not \leq V \} = \PE - P(V)$. 
    \item $\mcQ_{V}^{*A} := \mcQ_{V}- A$  for $A \subseteq \mcQ_{V}$.
\end{itemize}
\end{notation}

Note that $\PE - \mcQ_V = \PP V$. 
Furthermore for spaces $W,V \leq E$ such that $W \oplus V = E$, every element in $E/W$ can be written as $v + W$ for a unique $v \in V$. Thus the map $\psi: E/W \rightarrow V$, $v + W \mapsto v$ is a well-defined vector space isomorphism and induces a bijection on projective spaces. By slight abuse of notation we use $\psi$ as both the vector space isomorphism and the projective space bijection.
 It can then easily be shown that  $\langle \psi(A) \rangle = \psi(\langle A \rangle)$ for all $A \subseteq \PP(E/W)$.

\begin{theorem}\label{contraction proj}
Let $\mcM = (E, \rho)$ be a $q$-matroid and let $W,V \leq E$ such that $W \oplus V = E$. Let $V^{\perp}$ be the orthogonal space of $V$ w.r.t a fix NSBF. Furthermore let $S = \{\langle w_1 \rangle , \cdots , \langle w_t \rangle \} \subseteq \PE$ such that $\{w_1, \cdots, w_t\}$ is a basis of $W$.Then 
$$P(\mcM/W) \cong (P(\mcM) / S )\setminus \mcQ_{V}^{*S}  $$
$$P(\mcM \setminus V^{\perp}) \cong P(\mcM) \setminus \mcQ_{V}.$$
\end{theorem}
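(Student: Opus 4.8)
The plan is to prove each of the two equivalences by exhibiting an explicit bijection between the groundsets of the two matroids and checking that it intertwines the rank functions; since $P(\mcM/W)$, $P(\mcM\setminus V^{\perp})$ and every minor of $P(\mcM)$ are already known to be matroids (the projectivization of a $q$-matroid is a matroid, and deletions and contractions of matroids are matroids), no axiom verification will be needed.

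For the second equivalence I would first note that, by non-degeneracy of the NSBF, $\mcM\setminus V^{\perp}$ has groundspace $(V^{\perp})^{\perp}=V$ and rank function the restriction of $\rho$ to $\mcL(V)$; hence $P(\mcM\setminus V^{\perp})$ has groundset $\PP V$ and rank function $A\mapsto\rho(\langle A\rangle)$, where for $A\subseteq\PP V$ one has $\langle A\rangle\leq V$ and this span is the same whether computed in $V$ or in $E$. On the other hand $\mcQ_V=\PE-P(V)$, so $P(\mcM)\setminus\mcQ_V$ has groundset $\PE-\mcQ_V=\PP V$ and rank function $A\mapsto r(A)=\rho(\langle A\rangle)$. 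The two matroids thus literally coincide and the identity on $\PP V$ is the required equivalence.

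For the first equivalence the bijection is the one induced by the vector space isomorphism $\psi\colon E/W\to V$, $v+W\mapsto v$ (well defined because $W\oplus V=E$), giving a bijection $\PP(E/W)\to\PP V$. I would then proceed in steps. First, unwind definitions: $P(\mcM/W)$ has groundset $\PP(E/W)$ and rank $A\mapsto\rho(\pi^{-1}(\langle A\rangle))-\rho(W)$, where $\pi\colon E\to E/W$ and $\langle A\rangle$ is the span of $A$ in $E/W$. Second, observe that $S\subseteq P(W)\subseteq\mcQ_V$ (since $W\cap V=0$), so $\mcQ_V^{*S}=\mcQ_V-S$ is defined and the groundset of $(P(\mcM)/S)\setminus\mcQ_V^{*S}$ equals $(\PE-S)-(\mcQ_V-S)=(\PE-\mcQ_V)-S=\PP V-S=\PP V$, using that $\PP W$ and $\PP V$ are disjoint. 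Third, compute $r(S)=\rho(\langle S\rangle)=\rho(W)$ and, for $A\subseteq\PP V$, $r_{(P(\mcM)/S)\setminus\mcQ_V^{*S}}(A)=r(A\cup S)-r(S)=\rho(\langle A\rangle+W)-\rho(W)$, using $\langle A\cup S\rangle=\langle A\rangle+\langle S\rangle=\langle A\rangle+W$. Fourth, prove the key identity $\pi^{-1}(U)=\psi(U)+W$ for every $U\leq E/W$, which follows from $\pi\circ\psi=\id$ and $\ker\pi=W$; combined with $\langle\psi(A)\rangle=\psi(\langle A\rangle)$ (recorded just before the statement) this yields $\pi^{-1}(\langle A\rangle)=\psi(\langle A\rangle)+W=\langle\psi(A)\rangle+W$. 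Finally, conclude that for all $A\subseteq\PP(E/W)$ one has $r_{P(\mcM/W)}(A)=\rho(\langle\psi(A)\rangle+W)-\rho(W)=r_{(P(\mcM)/S)\setminus\mcQ_V^{*S}}(\psi(A))$, so $\psi$ is a matroid equivalence.

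The argument is essentially bookkeeping, so I do not expect a genuine obstacle; the step requiring the most care is the interplay in the first equivalence between the composite operation ``contract the spanning set $S$, then delete $\mcQ_V^{*S}$'' and the operation ``contract the subspace $W$'', i.e. verifying that the deletion removes exactly the projective points that meet $W$ and that $\pi^{-1}$ of a span in $E/W$ corresponds under $\psi$ to $\langle\psi(A)\rangle+W$, so that the two rank functions agree. It is also worth being explicit that $S$ is a chosen set of lines spanning $W$ (not all of $P(W)$), which is exactly what makes $r(S)=\rho(W)$ and hence makes $P(\mcM)/S$ the correct object to delete from.
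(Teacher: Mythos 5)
Your proposal is correct and follows essentially the same route as the paper's proof: the identity map on $\PP V$ for the second equivalence, and for the first the bijection $\psi\colon \PP(E/W)\to\PP V$ together with the identities $\pi^{-1}(\langle A\rangle)=\langle\psi(A)\cup S\rangle$ and $r(S)=\rho(W)$ to match the rank functions. The only difference is that you spell out the groundset bookkeeping $(\PE-S)-\mcQ_V^{*S}=\PP V$ a bit more explicitly than the paper does.
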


\begin{proof}
Let $N := P(\mcM)/S \setminus \mcQ_{V}^{*S}$. Note $N$ has groundset $\PE - \mcQ_{V} = \PP V$ whereas $P(\mcM/W)$ has groundset $\PP(E/W)$. Let $\psi : \PP(E/W) \rightarrow \PP V$ be the bijection described previously.
To show $N \cong P(M/W)$, we must show $r_{P(\mcM/W)}(A) = r_N(\psi(A))$ for all $A \subseteq \PP(E/W)$. 
Let $\pi : E \rightarrow E/W$ be the canonical projection. Since $S$ is a basis of $W$ then $\pi^{-1}(\langle A \rangle) = \langle \psi(A) \rangle + W = \langle \psi(A) \cup S \rangle.$ Furthermore by Theorem \ref{$q$-matroid/matroid}, $r_{P(\mcM)}(S) = \rho(W)$. Hence we get:
\begin{align*}
r_{P(\mcM/W)}(A) &= \rho_{\mcM/W}(\langle A \rangle)\\
                                    &= \rho_{\mcM}(\pi^{-1}(\langle A\rangle)) - \rho_{\mcM}(W)\\
                                    &= \rho_{\mcM}(\langle \psi(A) \cup S \rangle) - \rho_{\mcM}(\langle S \rangle)\\
                                    &= r_{P(\mcM)}( \psi(A) \cup S) - r_{P(\mcM)}(S) \\
                                    &= r_{P(\mcM)/ S}(\psi(A))\\
                                    &= r_{N}(\psi(A)),
\end{align*}

where the last equality holds because $\psi(A) \subseteq \PP V$ which is the groundset of $N$.

Moving on to the second equivalence. Both matroid $P(\mcM \setminus V^{\perp})$ and $P(\mcM) \setminus \mcQ_{V}$ have groundset $\PP V$. Hence we need only to show that the rank functions of both matroids are equal. 
Let $A \subseteq \PP V$. 
\begin{align*}
    r_{P(\mcM \setminus V^{\perp})}(A) &= \rho_{\mcM \setminus V^{\perp}}(\langle A \rangle)\\
    &= \rho_{\mcM}(\langle A \rangle) \\
    &= r_{P(\mcM)}(A) \\
    &= r_{P(\mcM) \setminus \mcQ_{V}}(A),
\end{align*}
where the last equality follows because $A \subseteq \PP V$ which is  the groundset of $P(\mcM)\setminus\mcQ_V$.

\end{proof}

\section{Maps of Matroids and $q$-Matroids}

Maps between matroids were first introduced to study matroids from a category theory approach. The reader may refer to \cite{Heunen2018TheCO,RotaMatroid} for more details. Maps between $q$-matroids were defined for the same purposes, but were only recently introduced in \cite{GLJ21}. In this section we focus on the relation between maps of $q$-matroids and maps of matroids, and show that the projectivization map is a functor from categories of $q$-matroids to categories of matroids. 
This, in turn, provides a new approach to study maps between $q$-matroids.
For both matroids and $q$-matroids, there exists two main types of maps that preserve the matroid structure: weak and strong maps. To avoid confusion between maps of matroids and maps of $q$-matroids, we use the terms weak and strong maps for the former, and, $q$-weak and $q$-strong maps for the latter. We recall the definitions of weak and strong maps between matroids and some of their properties. 

The reader should note that the loop extension (Definition \ref{loop extension}) is needed to define maps between matroids. As the next definition will show a map between matroids is a map defined on the groundset of the loop extension matroids.
By remark \ref{loop/emptyset}, the added loop can be seen as an element representing the empty set of the matroid. Hence, mapping an element to the added loop of the codomain can be seen as mapping an element to the empty set.

\begin{definition}\label{Strong Map 1}
Let $M = (S, r_M)$ and $N = (T , r_N)$ be matroids and $M_o$, $N_o$ be their respective loop extension matroids. A map $\sigma : M \rightarrow N$ is a map between the groundset of the loop extension matroids, i.e. $\sigma : S_o \rightarrow T_o$, such that $\sigma(o_M) = o_N$. Furthermore $\sigma$ is said to be:

\begin{itemize}
    \item \emph{weak} if  $r_{N_o}(\sigma (A)) \leq r_{M_o}(A)$ for all $A \subseteq S_o$.
    
    \item \emph{strong} if $\sigma^{-1}(F) \in \F_{M_o}$ for all $F \in \F_{N_o}$.
\end{itemize}
\end{definition}

It is well known (see \cite[Chap.~8, Lem.~8.1.7]{RotaMatroid}) that strong maps are weak maps. Furthermore a map $\sigma: M \rightarrow N$ induces a map $\sigma^{\#} : \F_{M_o} \rightarrow \F_{N_o}$, where $\sigma^{\#}(F) = \cl_{N_o}(\sigma(F))$ for all $F \in \F_{M_o}$. Using Proposition \ref{flat loop extension}, one can alternatively define $\sigma^{\#} : \F_M \rightarrow \F_N$. As the following theorem shows, the induced map $\sigma^{\#}$ provides an alternative definition for strong maps.

\begin{theorem}\label{Strong Map 2}(\cite[Prop 8.1.3]{RotaMatroid})

A map $\sigma: M \rightarrow N$ is a strong map  if and only if the following hold:

\begin{itemize}
    \item[(1)] for all $F_1, F_2 \in \F_M$,  
    $$\sigma^{\#}(F_1 \vee F_2) = \sigma^{\#}(F_1) \vee \sigma^{\#}(F_2)$$
    
    \item[(2)] $\sigma^{\#}$ sends atoms of $\F_{M}$ to atoms or to the zero of  $\F_N$. 
\end{itemize}
\end{theorem}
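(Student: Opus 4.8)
The plan is to transport everything to the lattices of flats $\F_{M_o}$, $\F_{N_o}$ of the loop extensions, which is legitimate by Proposition~\ref{flat loop extension}, and then to read the equivalence off the adjunction between $\sigma^{\#}$ and the preimage map $\sigma^{-1}$. The one fact both directions rest on is that for \emph{any} map $\sigma$ and all $F\in\F_{M_o}$, $G\in\F_{N_o}$,
\[
\sigma^{\#}(F)\le G \iff \sigma(F)\subseteq G \iff F\subseteq\sigma^{-1}(G),
\]
the first equivalence because $G$ is a flat and $\sigma^{\#}(F)=\cl_{N_o}(\sigma(F))$. In particular $\sigma$ is strong exactly when $\sigma^{-1}$ carries $\F_{N_o}$ into $\F_{M_o}$, and then the displayed chain says that $(\sigma^{\#},\sigma^{-1})$ is a monotone Galois connection between two finite lattices.

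For the forward implication, suppose $\sigma$ is strong. Then $\sigma^{\#}$ is the lower adjoint of $\sigma^{-1}\colon\F_{N_o}\to\F_{M_o}$, and lower adjoints preserve all joins; in particular $\sigma^{\#}(F_1\vee F_2)=\sigma^{\#}(F_1)\vee\sigma^{\#}(F_2)$, which is (1). For (2) I would invoke that strong maps are weak (\cite[Chap.~8, Lem.~8.1.7]{RotaMatroid}): if $a$ is an atom of $\F_{M_o}$ then $r_{M_o}(a)=h(a)=1$ by Proposition~\ref{flats matroid} applied to $M_o$, so weakness forces $r_{N_o}(\sigma^{\#}(a))=r_{N_o}(\cl_{N_o}(\sigma(a)))\le 1$; a rank-$0$ flat of $N_o$ is the bottom $\hat{0}_{N_o}$ and a rank-$1$ flat is an atom, so (2) follows.

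For the converse, assume (1) and (2). Fixing $G\in\F_{N_o}$, it suffices to show $\sigma^{-1}(G)$ is closed, i.e. that $\sigma(x)\in G$ whenever $x\in\cl_{M_o}(\sigma^{-1}(G))$, since $\sigma^{-1}(G)\subseteq\cl_{M_o}(\sigma^{-1}(G))$ is automatic. By (1), $\sigma^{\#}$ is order preserving. Write $\cl_{M_o}(\sigma^{-1}(G))$ as the join of the atoms $\cl_{M_o}(y)$ taken over the finitely many non-loop elements $y\in\sigma^{-1}(G)$, and push $\sigma^{\#}$ through this finite join using (1); the claim then reduces to showing (i) $\sigma^{\#}(\cl_{M_o}(y))\le G$ for each such $y$, and (ii) all loops of $M_o$ lie in $\sigma^{-1}(G)$, equivalently $\sigma^{\#}(\hat{0}_{M_o})=\hat{0}_{N_o}$, which holds because $\sigma(o_M)=o_N$ (and is trivial when $M$ is loopless). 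For (i), condition (2) forces $\sigma^{\#}(\cl_{M_o}(y))$ to be an atom or $\hat{0}_{N_o}$, and using that $\sigma(y)\in G$ lies in this flat one concludes $\sigma^{\#}(\cl_{M_o}(y))\le G$. Assembling, $\sigma^{\#}(\cl_{M_o}(\sigma^{-1}(G)))\le G$, hence $\sigma(\cl_{M_o}(\sigma^{-1}(G)))\subseteq G$, as needed.

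The step I expect to be the genuine obstacle is this converse, and within it the bookkeeping in (i)--(ii) around the loops and parallel classes of $M_o$: one must ensure that elements of $M_o$ sitting in a common atom are not separated by $\sigma$ relative to $G$, which is exactly what condition (2) is meant to guarantee and is the point where the argument goes beyond formal manipulation of the Galois connection. The forward direction, by contrast, is essentially automatic once the adjunction is in place and ``strong $\Rightarrow$ weak'' is cited.
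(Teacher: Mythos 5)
The paper offers no proof of this statement—it is quoted verbatim from the cited source—so there is nothing in-paper to compare against and your argument has to stand on its own. Your forward direction does: once $\sigma^{-1}$ is known to land in $\F_{M_o}$, the chain $\sigma^{\#}(F)\le G \Leftrightarrow \sigma(F)\subseteq G \Leftrightarrow F\subseteq\sigma^{-1}(G)$ really is an adjunction between the finite lattices $\F_{M_o}$ and $\F_{N_o}$, lower adjoints preserve joins, and deducing (2) from ``strong $\Rightarrow$ weak'' together with $r_{N_o}(\cl_{N_o}(A))=r_{N_o}(A)$ is correct and creates no circularity.

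The converse, however, has a genuine gap, and it sits exactly where you predicted. In (ii) you assert that every loop of $M_o$ lies in $\sigma^{-1}(G)$ ``because $\sigma(o_M)=o_N$''; this controls only the adjoined loop, and nothing in (1)--(2) forces a loop of $M$ itself to be sent to a loop of $N_o$. In (i), the inference ``$\sigma^{\#}(\cl_{M_o}(y))$ is an atom containing $\sigma(y)\in G$, hence contained in $G$'' is valid only when $\sigma(y)$ is not a loop of $N_o$: a loop lies in every flat and pins down nothing, while an element parallel to $y$ may be sent outside $G$. This is not closable bookkeeping, because the biconditional as literally transcribed in the paper fails there. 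Take $S=\{y,z\}$ with $y,z$ parallel non-loops (so $\F_{M_o}$ is the chain $\{o_M\}\subsetneq\{y,z,o_M\}$ with unique atom $\{y,z,o_M\}$), $T=\{w\}$ with $r_N(w)=1$, and $\sigma(o_M)=\sigma(y)=o_N$, $\sigma(z)=w$. Then $\sigma^{\#}(\{o_M\})=\{o_N\}$ and $\sigma^{\#}(\{y,z,o_M\})=\{w,o_N\}$, so (1) and (2) hold, yet $\sigma^{-1}(\{o_N\})=\{o_M,y\}$ is not a flat of $M_o$, so $\sigma$ is not strong. Your proof of the converse therefore cannot be completed as written; it goes through only under an additional hypothesis (for instance $M$ simple, or more generally that $\sigma$ carries loops of $M_o$ to loops of $N_o$ and does not separate parallel classes), which is implicitly how the cited proposition should be read and how it is applied later in the paper.
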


The main result of this section is the analogue of Theorem \ref{Strong Map 2} for $q$-matroids.
We turn to the definitions of maps between $q$-matroids, as introduced in \cite{GLJ21}. Similarly to matroids, maps between $q$-matroids are maps between groundspaces that send subspaces to subspaces. 

\begin{definition}\label{L-maps}
A map $\sigma : E_1 \rightarrow E_2$ is an \emph{$\mcL$-map} if $\sigma(V) \in \mcL(E_2)$ for all $V \in \mcL(E_1)$. An $\mcL$-map $\sigma$ induces a map $\sigma_{\mcL} : \mcL(E_1) \rightarrow \mcL(E_2)$. Two $\mcL$-maps $\sigma, \psi$ from $E_1$ to $E_2$ are \emph{$\mcL$-equivalent}, denoted by $\sigma \sim_{\mcL} \psi$, if $\sigma_{\mcL} = \psi_{\mcL}$.
\end{definition}

We consider the following two types of maps between $q$-matroids.

\begin{definition}
Let $\mcM = (E_1, \rho_{\mcM})$  and $\mcN = (E_2, \rho_{\mcN})$ be $q$-matroids. A map $\sigma: \mcM \rightarrow \mcN$ is an $\mcL$-map between the groundspaces of $\mcM$ and $\mcN$, i.e $\sigma: E_1 \rightarrow E_2$. Furthermore $\sigma$ is said to be
\begin{itemize}
    \item[(a)]  \emph{$q$-weak} if $\rho_{\mcN}(\sigma(V)) \leq \rho_{\mcM}(V)$ for all $V \leq E_1$.
    \item[(b)] \emph{$q$-strong}  if $\sigma^{-1}(F) \in \F_{\mcM}$ for all $F \in \F_{\mcN}$.
\end{itemize}
\end{definition}

To study the relation between maps of matroids and maps of $q$-matroids we need the following notation. 
Given a vector space $E$, define the \emph{extended projective space of $E$} as $\PoE = \PE \cup \{o\}$, where $\{o\}$ is an arbitrary element disjoint from $\PE$.  Let $P_o : E \rightarrow \PoE$, where $P_o(0) = o$ and $P_o(v) = \hat{P}(v)$ for $v \neq 0$ and $\hat{P}: E - \{0\} \rightarrow \PE$ is as introduced in the previous section. We call $P_o$ the \emph{extended projectivization map}. 
Note, unlike the projectivization map, we do not consider $P_o$ as lattice map but as a map between a vector space to its extended projective space.
Given a $q$-matroid $\mcM = (E, \rho)$ and the loop extension of its projectivization matroid $P(\mcM)_o = (\PoE, r_o)$, the map $P_o$ can be viewed as a map between the groundspace $E$ to the groundset $\PoE$ such that $\rho(V) = r_o(P_o(V))$ for all $V \leq E$. Furthermore for any $A \subseteq \PoE$ let $\langle A \rangle := \langle P_o^{-1}(A) \rangle_{\FF_q}$. It can easily be shown that $r_o(A) = \rho(\langle A \rangle )$.

Recall from Definition \ref{L-maps} that an $\mcL$-map $\sigma : E_1 \rightarrow E_2$ induces a map on the lattices of subspaces $\sigma_{\mcL} : \mcL(E_1) \rightarrow \mcL(E_2)$. By restricting $\sigma_{\mcL}$ to the $1$-dimensional spaces and the $0$ of $E_1$, $\sigma_{\mcL}$ can be viewed as map between the extended projective spaces $\PoE_1$ and $\PoE_2$, i.e $\sigma_{\mcL} : \PoE_1 \rightarrow \PoE_2$. As the next proposition shows, $\sigma$ and $\sigma_{\mcL} $ commute with the extended projectivization map.  

\begin{proposition}\label{maps commuting}
Let $\sigma : E_1 \rightarrow E_2$ be an $\mcL$-map, $\sigma_{\mcL} : \PoE_1 \rightarrow \PoE_2$ its induced map on the extended projective spaces, and $P_o : E_i \rightarrow \PoE_i$ be the extended projectivization map. Then 
$$P_o \circ \sigma = \sigma_{\mcL} \circ P_o.$$
\end{proposition}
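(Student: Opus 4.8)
The plan is to verify the identity $P_o \circ \sigma = \sigma_{\mcL} \circ P_o$ pointwise on $E_1$, splitting into the cases $v = 0$ and $v \neq 0$. Both sides are maps from $E_1$ to $\PoE_2$, so it suffices to check that they agree on every vector $v \in E_1$.

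First I would handle the zero vector. By definition of the extended projectivization map, $P_o(0) = o$ on the $E_1$ side, and then $\sigma_{\mcL}(P_o(0)) = \sigma_{\mcL}(o)$. Here I need to recall how $\sigma_{\mcL}$ is extended to act on the symbol $o$: as described in the paragraph preceding the proposition, $\sigma_{\mcL}$ restricted to $1$-dimensional spaces and the zero space of $E_1$ is viewed as a map $\PoE_1 \to \PoE_2$, so $o$ (which represents $\{0\} \in \mcL(E_1)$) is sent to the element of $\PoE_2$ representing $\sigma_{\mcL}(\{0\}) = \sigma(\{0\})$. Since $\sigma$ is a linear-type $\mcL$-map sending $\{0\}$ to $\{0\}$ (or at least $\sigma_{\mcL}(\{0\}) = \{0\}$ as the induced lattice map must preserve the bottom element — this is where I'd want to be slightly careful and cite the convention from \cite{GLJ21} that $\mcL$-maps send $0$ to $0$), we get $\sigma_{\mcL}(o) = o$. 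On the other side, $P_o(\sigma(0)) = P_o(0) = o$ as well. So both sides equal $o$.

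Next, for $v \neq 0$: the left-hand side gives $P_o(\sigma(v))$. If $\sigma(v) = 0$ then $P_o(\sigma(v)) = o$; if $\sigma(v) \neq 0$ then $P_o(\sigma(v)) = \hat P(\sigma(v)) = \langle \sigma(v)\rangle_{\FF_q}$. On the right-hand side, $P_o(v) = \hat P(v) = \langle v \rangle_{\FF_q}$, and then $\sigma_{\mcL}$ applied to this $1$-dimensional space returns $\sigma_{\mcL}(\langle v\rangle) = \sigma(\langle v \rangle) = \langle \sigma(v) \rangle$ as a subspace of $E_2$; viewed inside $\PoE_2$, this is $o$ precisely when $\sigma(v) = 0$ (i.e. when the image subspace is $\{0\}$), and otherwise it is the point $\langle \sigma(v)\rangle_{\FF_q} \in \PE_2$. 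In either subcase the two sides match.

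The only genuinely delicate point is bookkeeping about the identification $\sigma_{\mcL} : \PoE_1 \to \PoE_2$ — namely that a $1$-dimensional $\langle v\rangle$ whose image $\sigma(\langle v\rangle)$ is either $1$-dimensional or zero gets correctly read off as a point of $\PE_2$ or the symbol $o$ respectively, and that $\sigma$ and $\sigma_{\mcL}$ agree on $1$-spaces by construction (since $\sigma_{\mcL}(V) = \sigma(V)$ for all $V \in \mcL(E_1)$, and $\sigma(\langle v \rangle) = \langle \sigma(v)\rangle$ because $\sigma$ respects the span). Everything else is routine unwinding of definitions; there is no real obstacle, just a need to be explicit about the convention that makes $\sigma_{\mcL}$ a map of extended projective spaces.
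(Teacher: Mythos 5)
Your proof is correct and follows essentially the same route as the paper's: a pointwise check that reduces to the identity $\langle\sigma(v)\rangle=\sigma(\langle v\rangle)=\sigma_{\mcL}(\langle v\rangle)$ together with the identifications $P_o(v)=\langle v\rangle$ and $P_o(\sigma(v))=\langle\sigma(v)\rangle$. The paper simply writes this in one line without your explicit case split on $v=0$ and $\sigma(v)=0$ (those cases are absorbed by the convention identifying the zero subspace with the symbol $o$), and your observation that $\sigma(\{0\})=\{0\}$ follows automatically since an $\mcL$-map must send $\{0\}$ to a subspace.
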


\begin{proof}
Let $v \in E_1$. Since $\sigma$ is an $\mcL$-map then $\langle \sigma(v) \rangle = \sigma(\langle v \rangle) =  \sigma_{\mcL}(\langle v \rangle)$.
But note $\langle \sigma( v ) \rangle = P_o(\sigma(v))$ and $\langle v \rangle = P_o( v )$. Hence the wanted equality follows.
\end{proof}

We now consider the case when an $\mcL$-map $\sigma$ is a map between $q$-matroids. The induced map $\sigma_{\mcL}$ between the extended projective spaces turns out to be a map between projectivization matroids. Furthermore $\sigma$ being $q$-weak or $q$-strong is fully determined by whether $\sigma_{\mcL}$ is weak or strong, and vice versa.

\begin{theorem}\label{maps/$q$-maps}
Let $\mcM = (E_1, \rho_{\mcM})$, $\mcN = (E_2, \rho_{\mcN})$ be $q$-matroids and $P(\mcM) = (\PE_1, r_{P(\mcM)})$, $P(\mcN) = (\PE_2, r_{P(\mcN)})$ be their projectivization matroids. Let $\sigma : \mcM \rightarrow \mcN$ be an $\mcL$-map. Then $\sigma_{\mcL}: P(\mcM) \rightarrow P(\mcN)$ is a map between the projectivization matroids and the following holds:
\begin{itemize}
    \item $\sigma$  is $q$-weak if and only if $\sigma_{\mcL}$ is weak
    
    \item $\sigma$ is $q$-strong if and only if $\sigma_{\mcL}$ is strong. 
\end{itemize}
\end{theorem}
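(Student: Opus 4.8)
The plan is to verify the three claims in order: first that $\sigma_{\mcL}$ is a well-defined map of matroids $P(\mcM)\to P(\mcN)$, then the $q$-weak $\Leftrightarrow$ weak equivalence, then the $q$-strong $\Leftrightarrow$ strong equivalence. For the first claim, recall that a map of matroids in the sense of Definition \ref{Strong Map 1} must be a map between the groundsets of the loop extensions sending $o_{P(\mcM)}$ to $o_{P(\mcN)}$. The induced map $\sigma_{\mcL}$, restricted to $1$-dimensional subspaces and $\langle 0\rangle$, is exactly such a map $\PoE_1\to\PoE_2$: it sends $o$ (i.e.\ $\langle 0\rangle$) to $o$ because $\sigma$ is linear, hence $\sigma(0)=0$. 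So $\sigma_{\mcL}: P(\mcM)\to P(\mcN)$ is a legitimate map of matroids, and nothing more needs to be checked here; the weak/strong conditions are what we verify next.

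For the $q$-weak $\Leftrightarrow$ weak equivalence, the key identity is Proposition \ref{maps commuting}: $P_o\circ\sigma=\sigma_{\mcL}\circ P_o$. Combined with the facts recorded just before the theorem, namely $\rho(V)=r_o(P_o(V))$ and $r_o(A)=\rho(\langle A\rangle)$, I would argue as follows. Weakness of $\sigma_{\mcL}$ means $r_{P(\mcN)_o}(\sigma_{\mcL}(A))\le r_{P(\mcM)_o}(A)$ for all $A\subseteq\PoE_1$. Suppose $\sigma$ is $q$-weak. Given $A\subseteq\PoE_1$, set $V=\langle A\rangle\le E_1$; then $\langle\sigma_{\mcL}(A)\rangle=\langle\sigma(A)\rangle=\sigma(\langle A\rangle)=\sigma(V)$ using that $\sigma$ is an $\mcL$-map, so $r_{P(\mcN)_o}(\sigma_{\mcL}(A))=\rho_{\mcN}(\sigma(V))\le\rho_{\mcM}(V)=r_{P(\mcM)_o}(A)$. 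Conversely, if $\sigma_{\mcL}$ is weak, then for $V\le E_1$ apply the inequality to $A=P_o(V)$ and use $P_o\circ\sigma=\sigma_{\mcL}\circ P_o$ to get $\rho_{\mcN}(\sigma(V))=r_o(P_o(\sigma(V)))=r_o(\sigma_{\mcL}(P_o(V)))\le r_o(P_o(V))=\rho_{\mcM}(V)$. (One should double-check the bookkeeping around the added loop $o$, but since $r_o$ ignores $o$ and $\sigma_{\mcL}(o)=o$, it contributes nothing.)

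For the $q$-strong $\Leftrightarrow$ strong equivalence, I would use the flat correspondences established in Section 3. Strongness of $\sigma_{\mcL}$ means $\sigma_{\mcL}^{-1}(G)\in\F_{P(\mcN)_o}$ for all $G\in\F_{P(\mcN)_o}$; by Proposition \ref{flat loop extension} the flats of $P(\mcN)_o$ are exactly the sets $P(F')\cup\{o\}$ with $F'\in\F_{\mcN}$ (via Lemma \ref{flats properties}, $\F_{P(\mcN)}=P(\F_{\mcN})$), and similarly for the domain. The claim then reduces to the statement that $\sigma^{-1}(F')\in\F_{\mcM}$ for all $F'\in\F_{\mcN}$ if and only if $\sigma_{\mcL}^{-1}(P(F')\cup\{o\})\in\F_{P(\mcM)_o}$ for all such $F'$. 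Using $P_o\circ\sigma=\sigma_{\mcL}\circ P_o$ one shows that, as a subset of $E_1$, $P_o^{-1}\bigl(\sigma_{\mcL}^{-1}(P(F')\cup\{o\})\bigr)=\sigma^{-1}(P_o^{-1}(P(F')\cup\{o\}))=\sigma^{-1}(F')$; that is, $\sigma_{\mcL}^{-1}(P(F')\cup\{o\})=P_o(\sigma^{-1}(F'))=P(\sigma^{-1}(F'))\cup\{o\}$. So $\sigma_{\mcL}^{-1}(P(F')\cup\{o\})$ is of the form $P(U)\cup\{o\}$ for the subspace $U=\sigma^{-1}(F')\le E_1$ (note $\sigma^{-1}(F')$ is automatically a subspace since $\sigma$ is an $\mcL$-map, being the preimage of a subspace), and $P(U)\cup\{o\}\in\F_{P(\mcM)_o}$ iff $U\in\F_{\mcM}$ by Proposition \ref{flat loop extension} and Lemma \ref{flats properties}. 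Running this through all $F'\in\F_{\mcN}$ gives both implications.

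I expect the main obstacle to be purely organizational rather than deep: making sure the added loop $o$ is handled consistently on both the $q$-matroid and matroid sides (where on the $q$-matroid side it corresponds to $\langle 0\rangle$ and the "trivial" flat $\{0\}$, cf.\ Remark \ref{loop/emptyset}), and verifying that preimages under $\sigma_{\mcL}$ of sets of the form $P(F')\cup\{o\}$ are again of that shape — which is exactly where the commutation $P_o\circ\sigma=\sigma_{\mcL}\circ P_o$ and the fact that $\sigma$ is an $\mcL$-map (so preimages of subspaces are subspaces) do the real work. No submodularity or lattice-theoretic subtlety beyond Lemma \ref{flats properties} should be needed.
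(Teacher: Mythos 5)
Your proposal follows the same route as the paper: well-definedness via $\sigma_{\mcL}(o)=o$, the weak equivalence via Proposition \ref{maps commuting} together with $\rho(V)=r_o(P_o(V))$ and $r_o(A)=\rho(\langle A\rangle)$, and the strong equivalence via the description of $\F_{P(\mcN)_o}$ as $\{P(F)\cup\{o\}: F\in\F_{\mcN}\}$ and the identity $P_o^{-1}\circ\sigma_{\mcL}^{-1}=\sigma^{-1}\circ P_o^{-1}$. The overall structure is sound and matches the paper's proof.

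However, two of your intermediate assertions are stronger than what is justified (or needed), because a general $\mcL$-map is only assumed to send subspaces to subspaces; it need not be linear. First, in the weak direction you claim $\langle\sigma_{\mcL}(A)\rangle=\sigma(\langle A\rangle)$, i.e.\ that the span of the image equals the image of the span, and hence an \emph{equality} $r_{P(\mcN)_o}(\sigma_{\mcL}(A))=\rho_{\mcN}(\sigma(V))$. For an arbitrary $\mcL$-map one only gets the inclusion $\langle\sigma_{\mcL}(A)\rangle\subseteq\sigma(\langle A\rangle)$ (the right-hand side is a subspace containing $\sigma(P_o^{-1}(A))$), which by monotonicity gives $r_{P(\mcN)_o}(\sigma_{\mcL}(A))\le\rho_{\mcN}(\sigma(V))$ --- and that inequality is all the argument needs; this is exactly how the paper phrases it. Second, in the strong direction you assert that $\sigma^{-1}(F')$ is ``automatically a subspace since $\sigma$ is an $\mcL$-map, being the preimage of a subspace.'' $\mcL$-maps preserve subspaces forward, not backward, and nothing in the paper establishes this claim; one only gets that $\sigma^{-1}(F')$ is closed under scalar multiplication. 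The fix is to argue, as the paper does, with the biconditional $X\in\F_{P(\mcM)_o}\Leftrightarrow P_o^{-1}(X)\in\F_{\mcM}$ applied to $X=\sigma_{\mcL}^{-1}(P_o(F'))$: when $\sigma^{-1}(F')=P_o^{-1}(X)$ fails to be a subspace, both sides of the equivalence are false, so the chain of iffs still goes through. Neither issue is fatal --- the weaker true statements suffice at both spots --- but as written those two steps are not proved.
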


\begin{proof}
To start note that $\sigma_{\mcL}$ is a map between the groundsets of the loop extension matroid $P(\mcM)_o$ and $P(\mcN)_o$ with $\sigma_{\mcL}(o_{P(\mcM)}) = o_{P(\mcN)}$. Thus $\sigma_{\mcL} : P(\mcM) \rightarrow P(\mcN)$ is well defined.

We first prove $\sigma$ is $q$-weak if and only if $\sigma_{\mcL}$ is weak. Assume $\sigma$ is weak. Let $A \subseteq \PoE_1$ and $V := \langle A \rangle$. Both $P_o$ and $\sigma$ are inclusion preserving, hence, $(P_o \circ \sigma) (P_o^{-1}(A)) \subseteq (P_o \circ \sigma)(V)$.
Using Proposition \ref{maps commuting} on the first term of the previous inclusion gives us $(\sigma_{\mcL} \circ P_o)(P_o^{-1}(A)) = \sigma_{\mcL}(A) \subseteq (P_o \circ \sigma)(V)$. Furthermore, by the monotonicity property of the rank functions and because $\sigma$ is weak, we get
$$r_{P(\mcN)_o}(\sigma_{\mcL}(A)) \leq r_{P(\mcN)_o}(( P_o \circ \sigma)(V))) = \rho_{\mcN}(\sigma(V)) \leq \rho_{\mcM}(V) = r_{P(\mcM)_o}(A).$$
Because $A \subseteq \PoE_1$ was arbitrarily chosen, then $\sigma_{\mcL}$ is weak. 

Now assume $\sigma_{\mcL}$ is weak. Let $V \leq E_1$ and recall $\rho_{\mcM}(V) = r_{P(\mcM)_o}(P_o(V))$. Since $\sigma_{\mcL}$ is weak  $r_{P(\mcN)_o}( (\sigma_{\mcL} \circ P_o)(V)) \leq r_{P(\mcM)_o}(P_o(V))$. Hence by Proposition \ref{maps commuting}, $r_{P(\mcN)_o}((P_o \circ \sigma)(V)) \leq r_{P(\mcM)_o}(P_o(V))$. This implies  $\rho_{\mcN}(\sigma(V)) \leq \rho_{\mcM}(V)$ and shows $\sigma$ is $q$-weak. 

\vspace{.2cm}

We now show that $\sigma$ is $q$-strong if and only if $\sigma_{\mcL}$ is strong. 
From Proposition \ref{flat loop extension} and Lemma \ref{flats properties}, $F \in \F_{\mcM} \Leftrightarrow P(F) \in \F_{P(\mcM)} \Leftrightarrow P(F) \cup \{o\} \in \F_{P(\mcM)_o}$. A similar chain of equivalence holds for $\F_{\mcN}$ and $\F_{P(\mcN)_o}$. Furthermore all flats of $P(\mcN)_o$ are of the form  $P_o(F) = P(F) \cup \{o\} $ for some flat in $\mcN$. 
Therefore $\sigma_{\mcL}$ is strong iff $\sigma_{\mcL}^{-1} (P_o(F)) \in \F_{P(\mcM)_o}$ for all $P_o(F) \in \F_{P_o(\mcN)}$ iff $ (\sigma_{\mcL} \circ P_o)^{-1}(P_o(F)) \in \F_{\mcM}$ for all $P_o(F) \in \F_{P_o(\mcN)}$ iff $(P_o \circ \sigma)^{-1}(P_o(F)) = \sigma^{-1}(F) \in \F_{\mcM}$ for all $F \in \F_{\mcN}$ iff $\sigma$ is $q$-strong.

\end{proof}

From the above theorem it can easily be seen that the projectivization map is a functor from the category of $q$-matroids with $q$-weak (resp.\ $q$-strong) map to the category of matroids with weak (resp.\ strong) maps. 
We now turn towards a proof of the analogue of Theorem \ref{Strong Map 2}. We first define the analogue of the map $\sigma^{\#}$. 

\begin{definition}\label{$q$-flat map}
Let $\mcM$ and $\mcN$ be $q$-matroids with respective groundspaces $E_1, E_2$ and  $\sigma : \mcM \rightarrow \mcN$ be an $\mcL$-map. Define $\sigma^{\#}: \F_{\mcM} \rightarrow \F_{\mcN}$ such that
 $$\sigma^{\#}(F) = \cl_{\mcN}(\sigma(F)).$$
\end{definition}

The next useful Lemma shows that the induced maps $\sigma^{\#}$ and $\sigma_{\mcL}^{\#}$ commute with the extended projectivization map. 

\begin{lemma}\label{commute flats}
Let $\mcM, \mcN$ be $q$-matroids, $\F_{\mcM} , \F_{\mcN}$ their lattices of flats and $P(\mcM), P(\mcN)$ their projectivization matroids. Furthermore let $\sigma:\mcM \rightarrow \mcN$ be an $\mcL$-map, $\sigma_{\mcL} : P(\mcM) \rightarrow P(\mcN)$ its induced map and $P_o: E_i \rightarrow \PoE_i$  the extended projectivization map.
Then 
$$P_o \circ \sigma^{\#} = \sigma_{\mcL}^{\#} \circ P_o$$
\end{lemma}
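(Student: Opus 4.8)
The plan is to verify the identity $P_o \circ \sigma^{\#} = \sigma_{\mcL}^{\#} \circ P_o$ by evaluating both sides on an arbitrary flat $F \in \F_{\mcM}$ and tracing through the definitions. On the left, $(P_o \circ \sigma^{\#})(F) = P_o(\cl_{\mcN}(\sigma(F)))$; on the right, $(\sigma_{\mcL}^{\#} \circ P_o)(F) = \cl_{P(\mcN)_o}(\sigma_{\mcL}(P_o(F)))$. Since $\sigma^{\#}(F)$ is a flat of $\mcN$ and $\sigma_{\mcL}^{\#}(P_o(F))$ is a flat of $P(\mcN)_o$, both outputs are flats, and by Proposition \ref{flat loop extension} together with Lemma \ref{flats properties} the flats of $P(\mcN)_o$ are exactly the sets $P_o(G) = P(G) \cup \{o\}$ for $G \in \F_{\mcN}$. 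So it suffices to show $P_o(\cl_{\mcN}(\sigma(F))) = \cl_{P(\mcN)_o}(\sigma_{\mcL}(P_o(F)))$, i.e.\ that applying $P_o$ to the $q$-matroid closure of $\sigma(F)$ gives the matroid closure (in $P(\mcN)_o$) of the image $\sigma_{\mcL}(P_o(F))$.

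The key reduction is Proposition \ref{maps commuting}, which gives $P_o \circ \sigma = \sigma_{\mcL} \circ P_o$, so $\sigma_{\mcL}(P_o(F)) = P_o(\sigma(F))$. Thus the target identity becomes
\[
P_o(\cl_{\mcN}(\sigma(F))) = \cl_{P(\mcN)_o}(P_o(\sigma(F))).
\]
Writing $V := \sigma(F) \leq E_2$ (which is a subspace since $\sigma$ is an $\mcL$-map), this is the statement that $P_o$ intertwines the $q$-matroid closure operator with the matroid closure operator of the loop-extended projectivization. I would prove the two inclusions separately. For "$\subseteq$": if $w \in \cl_{\mcN}(V)$ with $w \neq 0$, then $\rho_{\mcN}(V + \langle w \rangle) = \rho_{\mcN}(V)$, and since $r_{P(\mcN)_o}(P_o(U)) = \rho_{\mcN}(U)$ for all subspaces $U$ and $\langle P_o(V) \cup \{P_o(w)\}\rangle = V + \langle w\rangle$, we get $r_{P(\mcN)_o}(P_o(V) \cup \{P_o(w)\}) = r_{P(\mcN)_o}(P_o(V))$, so $P_o(w) \in \cl_{P(\mcN)_o}(P_o(V))$; the case $w = 0$ maps to the loop $o$, which lies in every flat of $P(\mcN)_o$ and hence in the closure. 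For "$\supseteq$": if $x \in \cl_{P(\mcN)_o}(P_o(V))$ then either $x = o = P_o(0)$ (and $0 \in \cl_{\mcN}(V)$ trivially), or $x = \langle w \rangle$ for some $w \neq 0$ with $r_{P(\mcN)_o}(P_o(V) \cup \{x\}) = r_{P(\mcN)_o}(P_o(V))$, which by the same rank identity forces $\rho_{\mcN}(V + \langle w\rangle) = \rho_{\mcN}(V)$, i.e.\ $w \in \cl_{\mcN}(V)$ and $x = P_o(w) \in P_o(\cl_{\mcN}(V))$.

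The only mild subtlety — and the step I'd expect to need the most care — is bookkeeping with the extra loop $\{o\}$: one must check that $o$ is consistently on both sides (it always is, since $\cl_{\mcN}(\cdot)$ as a subspace contains $0$, and $o$ as a loop of $P(\mcN)_o$ lies in every flat hence every closure), and that the identities $r_{P(\mcN)_o}(A) = \rho_{\mcN}(\langle A \rangle)$ and $\langle P_o(U) \rangle = U$ — both recorded in the paragraph preceding Proposition \ref{maps commuting} — are applied correctly to sets containing $o$. Once the loop is handled, the argument is a direct translation between the two closure operators via the rank-preserving property of $P_o$. I would close by noting that since the identity holds on every $F \in \F_{\mcM}$, the maps $P_o \circ \sigma^{\#}$ and $\sigma_{\mcL}^{\#} \circ P_o$ agree, completing the proof.
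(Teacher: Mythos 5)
Your argument is correct, and it reaches the identity by a somewhat different route than the paper. Both proofs use Proposition \ref{maps commuting} to rewrite $\sigma_{\mcL}(P_o(F))$ as $P_o(\sigma(F))$, so the real content in each case is the comparison of $P_o(\cl_{\mcN}(\sigma(F)))$ with $\cl_{P(\mcN)_o}(P_o(\sigma(F)))$. The paper handles this lattice-theoretically: it gets the containment $(\sigma_{\mcL}^{\#}\circ P_o)(F)\subseteq (P_o\circ\sigma^{\#})(F)$ by applying the closure operator to an inclusion and noting that $P_o$ of a flat of $\mcN$ is already a flat of $P(\mcN)_o$, and then rules out strict containment by contradiction, using that the $P_o$-preimage of a flat of $P(\mcN)_o$ is a flat of $\mcN$ and therefore must contain $\cl_{\mcN}(\sigma(F))$. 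You instead prove the stronger, standalone fact that $P_o$ intertwines the two closure operators on an arbitrary subspace $V$, i.e.\ $P_o(\cl_{\mcN}(V)) = \cl_{P(\mcN)_o}(P_o(V))$, by an element-wise double inclusion driven by the rank identities $r_{P(\mcN)_o}(A)=\rho_{\mcN}(\langle A\rangle)$ and $\langle P_o(V)\cup\{P_o(w)\}\rangle = V+\langle w\rangle$, with the added loop $o$ handled separately. Your version is more computational but also more self-contained (it needs only the rank characterization of closure, not the flat correspondence of Lemma \ref{flats properties}), and the intertwining statement you isolate is reusable beyond the specific subspace $\sigma(F)$; the paper's version is shorter given that the flat correspondence is already on record. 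Your bookkeeping of the loop $o$ is handled correctly on both sides.
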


\begin{proof}
First recall, $F \in \F_{\mcM} \Leftrightarrow P_o(F) \in \F_{P(\mcM)_o}$. Let $F \in \F_M$, then $\sigma (F) \subseteq \sigma^{\#}(F)$ and since $P_o$ is inclusion preserving  $(P_o \circ \sigma)(F) \subseteq (P_o \circ \sigma^{\#})(F)$.
By Proposition \ref{maps commuting}, the above containment implies $(\sigma_{\mcL} \circ P_o)(F) \subseteq (P_o \circ \sigma^{\#})(F)$. 
Applying the closure operator of $P(\mcN)_o$, we get
$$(\sigma_{\mcL}^{\#} \circ P_o)(F) = \cl_{P(\mcN)_o}((\sigma_{\mcL} \circ P_o)(F)) \subseteq \cl_{P(\mcN)_o}((P_o \circ \sigma^{\#})(F)) = (P_o \circ \sigma^{\#})(F),$$
where the final equality holds because $\sigma^{\#}(F) \in \F_{\mcN}$ and therefore $(P_o \circ \sigma^{\#})(F) \in \F_{P(\mcN)_o}$. 
 Assume, for sake of contradiction, that $(\sigma_{\mcL}^{\#} \circ P_o)(F) \subsetneq (P_o \circ \sigma^{\#}) (F)$. Let $F' := (\sigma_{\mcL}^{\#} \circ P_o)(F)$. Then
$$(\sigma_{\mcL} \circ P_o)(F) \subseteq F' \subsetneq (P_o \circ \sigma^{\#}) (F).$$

By considering their preimage under $P_o$ and because $\sigma_{\mcL} \circ P_o = P_o \circ \sigma$,  we get
$$\sigma(F) \subseteq P_o^{-1}(F') \subsetneq \sigma^{\#}(F).$$

However since $F' \in \F_{P(\mcN)_o}$ then $P_o^{-1}(F') \in \F_{\mcN}$. Therefore $P_o^{-1}(F')$ must contain $\cl_{\mcN}(\sigma(F)) = \sigma^{\#}(F)$, a contradiction. Hence $$(\sigma_{\mcL}^{\#} \circ P_o)(F) = (P_o \circ \sigma^{\#}) (F)$$
\end{proof}

In the statement of the previous Lemma, one can replace the extended projectivization map $P_o$ by the projectivization map $P$ introduced in the previous section. In fact, as previously remarked, the map $\sigma_{\mcL}^{\#}$ can be considered as map between the lattices of flats  $\F_{P(\mcM)}$ to $\F_{P(\mcN)}$. Furthermore the projectivization map can also be restricted to a map between the lattice of flats of a $q$-matroid and its projectivization matroid. In the following Lemma, $P$ refers to the projectivization map restricted to the lattice of flats $\F_{\mcM}$ and $\F_{\mcN}$.

\begin{lemma}\label{commute flats 2}
Let the data be as in Lemma \ref{commute flats}, and let $P$ be the projectivization map on the lattices of flats $\F_{\mcM}$ and $\F_{\mcN}$. Then 
$$P \circ \sigma^{\#} = \sigma_{\mcL}^{\#} \circ P.$$
\end{lemma}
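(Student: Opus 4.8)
The plan is to deduce Lemma \ref{commute flats 2} directly from Lemma \ref{commute flats} by ``forgetting'' the extra loop element $o$. First I would recall the precise relationship between the two flavors of projectivization map: on the level of subspaces, $P(V) = P_o(V) - \{o\}$ for every $V \le E$, since $P_o(V) = P(V) \cup \{o\}$ whenever $0 \in V$ (which is always the case). In particular, restricting to $F \in \F_{\mcM}$, we have $P(F) = P_o(F) - \{o\}$, and by Proposition \ref{flat loop extension} together with Lemma \ref{flats properties}, $F \in \F_{\mcM} \Leftrightarrow P(F) \in \F_{P(\mcM)} \Leftrightarrow P(F) \cup \{o\} = P_o(F) \in \F_{P(\mcM)_o}$, so the map $F \mapsto P(F)$ is a well-defined bijection $\F_{\mcM} \to \F_{P(\mcM)}$, and likewise for $\mcN$.

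Next I would observe that $\sigma_{\mcL}^{\#}$, viewed as a map $\F_{P(\mcM)} \to \F_{P(\mcN)}$ as explained in the paragraph preceding the statement, is related to the map $\sigma_{\mcL}^{\#}: \F_{P(\mcM)_o} \to \F_{P(\mcN)_o}$ of Lemma \ref{commute flats} by the correspondence $F' \mapsto F' \cup \{o\}$; concretely, for $F' \in \F_{P(\mcM)}$ we have $\cl_{P(\mcN)}(\sigma_{\mcL}(F')) \cup \{o\} = \cl_{P(\mcN)_o}(\sigma_{\mcL}(F' \cup \{o\}))$, since adjoining the loop $o$ to a set does not change its closure in the matroid-theoretic sense except for including $o$ itself. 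Granting this, the identity $P_o \circ \sigma^{\#} = \sigma_{\mcL}^{\#} \circ P_o$ from Lemma \ref{commute flats}, evaluated at $F \in \F_{\mcM}$, reads $P_o(\sigma^{\#}(F)) = \sigma_{\mcL}^{\#}(P_o(F))$, i.e. $P(\sigma^{\#}(F)) \cup \{o\} = \sigma_{\mcL}^{\#}(P(F) \cup \{o\})$. Removing $\{o\}$ from both sides — which is legitimate because $o$ lies in every flat of the loop extension and in neither $P(\sigma^{\#}(F))$ nor $\sigma_{\mcL}^{\#}(P(F))$ — yields exactly $P(\sigma^{\#}(F)) = \sigma_{\mcL}^{\#}(P(F))$, which is the claimed identity $P \circ \sigma^{\#} = \sigma_{\mcL}^{\#} \circ P$.

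The main obstacle I anticipate is bookkeeping the two uses of the symbol $\sigma_{\mcL}^{\#}$ (one on $\F_{P(\mcM)_o}$, one on $\F_{P(\mcM)}$) and the two uses of $P$ versus $P_o$ cleanly enough that removing $o$ from both sides of the equation in Lemma \ref{commute flats} is manifestly valid; in particular one must check that the closure operators $\cl_{P(\mcN)}$ and $\cl_{P(\mcN)_o}$ agree after deleting $o$, which follows from Remark \ref{loop/emptyset} (that $P(\mcN)_o \setminus \{o\} = P(\mcN)$) and the fact that a loop belongs to the closure of every set. Alternatively — and this may be cleaner to write — one could bypass Lemma \ref{commute flats} entirely and repeat its short argument verbatim with $P$ in place of $P_o$: since $P$ is inclusion preserving and $\sigma_{\mcL}\circ P = P \circ \sigma$ (Proposition \ref{maps commuting} restricted to one-dimensional spaces, noting $P$ discards the zero space), one gets $\sigma_{\mcL}^{\#}(P(F)) = \cl_{P(\mcN)}(\sigma_{\mcL}(P(F))) = \cl_{P(\mcN)}(P(\sigma(F))) \subseteq \cl_{P(\mcN)}(P(\sigma^{\#}(F))) = P(\sigma^{\#}(F))$, the last equality because $\sigma^{\#}(F) \in \F_{\mcN}$ so $P(\sigma^{\#}(F)) \in \F_{P(\mcN)}$ by Lemma \ref{flats properties}; and for the reverse inclusion one argues as in Lemma \ref{commute flats} that if $P(\sigma^{\#}(F))$ strictly contained $\sigma_{\mcL}^{\#}(P(F)) =: G$, then $P^{-1}(G) \in \F_{\mcN}$ would be a flat with $\sigma(F) \subseteq P^{-1}(G) \subsetneq \sigma^{\#}(F) = \cl_{\mcN}(\sigma(F))$, a contradiction. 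I would present whichever of these two routes is shorter in the final writeup, most likely the deduction from Lemma \ref{commute flats}.
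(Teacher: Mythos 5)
Your first route is exactly the paper's proof: the paper simply records the chain of equalities $\F_{P(\mcM)} = \{P_o(F) - \{o\} : F \in \F_{\mcM}\} = \{P(F) : F \in \F_{\mcM}\}$ (and likewise for $\mcN$) and then deduces the identity from Lemma \ref{commute flats} by stripping off $\{o\}$, which is precisely your argument, just with less bookkeeping spelled out. Your proposal is correct and matches the paper's approach.
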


\begin{proof}
Recall $\F_{P(\mcM)} = \{ F' - \{o\} \, : \, F' \in P(\mcM)_o\} = \{P_o(F) - \{o\} \, : \, F \in \F_{\mcM}\} = \{P(F) \, : \, F \in \F_{\mcM}\}$ and that the same holds for $P(\mcN)$. From the above chain of equality and Lemma \ref{commute flats}, equality follows straightforwardly.
\end{proof}
We now state and show the analogue of Theorem \ref{Strong Map 2} for $q$-strong maps. 
\begin{theorem}\label{$q$-strong map 2}

Let $\mcM$, $\mcN$ be $q$-matroids. An $\mcL$-map $\sigma: \mcM \rightarrow \mcN$ is a $q$-strong map  if and only if the following holds:

\begin{itemize}
    \item[(1)] for all $F_1, F_2 \in \F_{\mcM}$,  
    $$\sigma^{\#}(F_1 \vee F_2) = \sigma^{\#}(F_1) \vee \sigma^{\#}(F_2)$$
    
    \item[(2)] $\sigma^{\#}$ sends of $\F_{\mcM}$ atoms to atoms or to the zero of $\F_{\mcN}$. 
\end{itemize}
\end{theorem}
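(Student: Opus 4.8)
The plan is to deduce the statement from the matroid version, Theorem \ref{Strong Map 2}, by pushing everything through the projectivization. By Theorem \ref{maps/$q$-maps}, $\sigma$ is $q$-strong if and only if $\sigma_{\mcL} : P(\mcM) \to P(\mcN)$ is strong, and by Theorem \ref{Strong Map 2} the latter holds if and only if $\sigma_{\mcL}^{\#}$ preserves joins of flats and sends each atom of $\F_{P(\mcM)}$ to an atom or to the zero of $\F_{P(\mcN)}$. So it suffices to check that these two conditions on $\sigma_{\mcL}^{\#}$ are equivalent, respectively, to (1) and (2) for $\sigma^{\#}$. The two tools for the transfer are Lemma \ref{commute flats 2}, which gives $P \circ \sigma^{\#} = \sigma_{\mcL}^{\#} \circ P$ on lattices of flats, and Lemma \ref{flats properties}, by which $P : \F_{\mcM} \to \F_{P(\mcM)}$ is a lattice isomorphism commuting with meet and join (and likewise for $\mcN$); in particular $P$ is a bijection on flats carrying the bottom element to the bottom element and atoms bijectively to atoms.

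For condition (1), I would evaluate join-preservation of $\sigma_{\mcL}^{\#}$ on the pair $P(F_1), P(F_2)$ with $F_1, F_2 \in \F_{\mcM}$. Using $P(F_1)\vee P(F_2) = P(F_1 \vee F_2)$ from Lemma \ref{flats properties} and then Lemma \ref{commute flats 2} on each side turns the identity $\sigma_{\mcL}^{\#}(P(F_1) \vee P(F_2)) = \sigma_{\mcL}^{\#}(P(F_1)) \vee \sigma_{\mcL}^{\#}(P(F_2))$ into $P(\sigma^{\#}(F_1 \vee F_2)) = P(\sigma^{\#}(F_1) \vee \sigma^{\#}(F_2))$, and injectivity of $P$ on flats yields $\sigma^{\#}(F_1 \vee F_2) = \sigma^{\#}(F_1) \vee \sigma^{\#}(F_2)$. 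Since every pair of flats of $P(\mcM)$ is of the form $P(F_1), P(F_2)$ (again by Lemma \ref{flats properties}), the chain of equalities reverses, giving the equivalence. For condition (2), let $F$ be an atom of $\F_{\mcM}$, so $P(F)$ is an atom of $\F_{P(\mcM)}$; by Lemma \ref{commute flats 2}, $\sigma_{\mcL}^{\#}(P(F)) = P(\sigma^{\#}(F))$, and since $P$ is a lattice isomorphism $\F_{\mcN} \to \F_{P(\mcN)}$, this element is an atom (resp.\ the bottom) of $\F_{P(\mcN)}$ if and only if $\sigma^{\#}(F)$ is an atom (resp.\ the bottom) of $\F_{\mcN}$. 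As atoms of $\F_{P(\mcM)}$ are exactly the $P(F)$ for $F$ an atom of $\F_{\mcM}$, this gives the equivalence of the atom conditions. Combining the two equivalences with the two reductions above completes the argument.

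I do not expect a genuine obstacle here; the proof is a routine transfer along the functor. The one point that needs care — and the reason the full lattice-isomorphism statement of Lemma \ref{flats properties} is used rather than just the set equality $\F_{P(\mcM)} = P(\F_{\mcM})$ — is keeping the zero flat of $\mcN$ aligned with the zero flat of $P(\mcN)$ (equivalently, that $P(\langle 0 \rangle) = \emptyset$) and the atoms aligned with the atoms, so that condition (2) transfers verbatim rather than up to a relabelling.
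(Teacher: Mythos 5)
Your proposal is correct and follows essentially the same route as the paper's proof: reduce via Theorem \ref{maps/$q$-maps} to the statement that $\sigma_{\mcL}$ is strong, invoke Theorem \ref{Strong Map 2}, and transfer both conditions back and forth using Lemma \ref{commute flats 2} together with the lattice isomorphism $P$ from Lemma \ref{flats properties}. The paper writes the two implications out separately while you phrase the transfer as a reversible chain of equivalences, but the content and the key lemmas are identical.
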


\begin{proof}

($\Rightarrow$) Let $\sigma : \mcM \rightarrow \mcN$ be a $q$-strong map, which implies by Theorem \ref{maps/$q$-maps} that $\sigma_{\mcL}: P(\mcM) \rightarrow P(\mcN)$ is a strong map. 
By Lemma \ref{flats properties} (1), $F \in \F_{\mcM} \Leftrightarrow P(F) \in \F_{P(\mcM)}$. Furthermore, from Theorem \ref{Strong Map 2} we obtain  $\sigma_{\mcL}^{\#}(P(F_1) \vee P(F_2)) = \sigma_{\mcL}^{\#}(P(F_1)) \vee \sigma_{\mcL}^{\#}(P(F_2))$ for all $F_1, F_2 \in \F_{P(\mcM)}$. 
 By Lemma \ref{flats properties} (2), $P(F_1) \vee P(F_2) = P(F_1 \vee F_2)$, hence $\sigma_{\mcL}^{\#}(P(F_1 \vee F_2)) = \sigma_{\mcL}^{\#}(P(F_1)) \vee \sigma_{\mcL}^{\#}(P(F_2))$.
Applying Lemma \ref{commute flats 2} on the above equalities gives us 
\begin{align*}
    (P \circ \sigma^{\#})(F_1 \vee F_2) &= (P \circ \sigma^{\#})(F_1) \vee (P \circ \sigma^{\#})(F_2)\\
    &= P ( \sigma^{\#}(F_1) \vee \sigma^{\#}(F_2)).
\end{align*}
Finally since $P$ is an isomorphism on the lattice of flat, the above equality implies
$$\sigma^{\#}(F_1 \vee F_2) = \sigma^{\#}(F_1) \vee \sigma^{\#}(F_2),$$
which shows $\sigma$ satisfies property (1) for all $F_1, F_2 \in \F_{\mcM}$.

To show $\sigma$ satisfies property (2), let $F \in \F_{\mcM}$ be an atom. Since $P$ is a lattice isomorphism then $P(F)$ is an atom of $\F_{P(\mcM)}$. Moreover $\sigma_{\mcL}$ is a strong map, hence by Theorem \ref{Strong Map 2}, $(\sigma_{\mcL}^{\#} \circ P)(F)$ must be an atom or the zero of $\F_{P(\mcN)}$. But by Lemma \ref{commute flats 2}, $(\sigma_{\mcL}^{\#} \circ P)(F) = (P \circ \sigma^{\#})(F)$, which implies $\sigma^{\#}(F)$ must be an atom or the zero of $\F_{\mcN}$ because, once again, $P$ is a lattice isomorphism. This concludes  that $\sigma^{\#}$ satisfies the wanting properties.

($\Leftarrow$) Let $\sigma^{\#}$ satisfy properties (1) and (2). 
We show that $\sigma$ is a $q$-strong map by showing that $\sigma_{\mcL}$ is a strong map. To do so we show that $\sigma_{\mcL}^{\#}$ satisfies Proposition \ref{Strong Map 2}.

Let $P(F_1), P(F_2) \in \F_{P(\mcM)}$. By Lemma \ref{flats properties} (2) $\sigma_{\mcL}^{\#}(P(F_1) \vee P(F_2)) = \sigma_{\mcL}^{\#}(P(F_1 \vee F_2)).$ Using Lemma \ref{commute flats 2} and the fact that $\sigma^{\#}$ satisfies property (1), we get
\begin{align*}
    (\sigma_{\mcL}^{\#} \circ P)(F_1 \vee F_2) &= (P \circ \sigma^{\#})(F_1 \vee F_2)\\
                                        &= P ( \sigma^{\#}(F_1) \vee \sigma^{\#}(F_2))\\
                                        &= (P \circ \sigma^{\#})(F_1) \vee (P \circ \sigma^{\#})(F_2)\\
                                        &= (\sigma_{\mcL}^{\#} \circ P)(F_1) \vee (\sigma_{\mcL}^{\#} \circ P)(F_2),
\end{align*}
where the second to last equality follows from Lemma \ref{flats properties} (2). Hence $\sigma_{\mcL}^{\#}$ satisfies property (1) of Prop \ref{Strong Map 2}.

Let  $P(F) \in \F_{P(\mcM)}$ be an atom which, since $P$ is a lattice isomorphism, implies $F$ is an atom of $\F_{\mcM}$. Once again  we use $(\sigma_{\mcL}^{\#} \circ P)(F) = (P \circ \sigma^{\#})(F).$
Because $\sigma^{\#}$ satisfies property (2) then $\sigma^{\#}(F)$ is an atom or the zero of $\F_{\mcN}$. Finally since $P$ is a lattice isomorphism between $\F_{\mcN}$ and $\F_{P(\mcN)}$ then $P(\sigma^{\#}(F)) = \sigma_{\mcL}^{\#}(P(F))$ must be an atom or the zero of $\F_{P(\mcN)}$ which show $\sigma_{\mcL}^{\#}$ satisfies property (2) of Proposition \ref{Strong Map 2}.
 Therefore $\sigma_{\mcL}$ is a strong map and by Theorem \ref{maps/$q$-maps} we get that $\sigma$ is $q$-strong, concluding the proof. 
\end{proof}

We conclude the section by showing that $q$-strong maps are $q$-weak maps. 

\begin{corollary}
Let $\mcM = (E_1, \rho_{\mcM})$, $\mcN = (E_2, \rho_{\mcN})$ be $q$-matroids and $\sigma : \mcM \rightarrow \mcN$ be a $q$-strong map. Then $\sigma$ is a $q$-weak map. 
\end{corollary}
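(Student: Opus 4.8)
The plan is to reduce the statement to the corresponding fact for matroids via the functoriality established in Theorem~\ref{maps/$q$-maps}. Specifically, suppose $\sigma : \mcM \rightarrow \mcN$ is a $q$-strong map. By Theorem~\ref{maps/$q$-maps}, the induced map $\sigma_{\mcL} : P(\mcM) \rightarrow P(\mcN)$ between the projectivization matroids is a strong map. It is a well-known fact about matroids (cited just after Definition~\ref{Strong Map 1}, from \cite[Chap.~8, Lem.~8.1.7]{RotaMatroid}) that strong maps of matroids are weak maps, so $\sigma_{\mcL}$ is a weak map. Applying the other direction of Theorem~\ref{maps/$q$-maps} — namely that $\sigma$ is $q$-weak if and only if $\sigma_{\mcL}$ is weak — we conclude that $\sigma$ is $q$-weak, which is exactly what we want.

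The proof is therefore essentially a one-line chain of implications:
\[
\sigma \text{ is $q$-strong} \ \Longrightarrow\ \sigma_{\mcL} \text{ is strong} \ \Longrightarrow\ \sigma_{\mcL} \text{ is weak} \ \Longrightarrow\ \sigma \text{ is $q$-weak}.
\]
The first and third implications are the two equivalences packaged in Theorem~\ref{maps/$q$-maps}; the middle one is the classical matroid result. There is essentially no obstacle here, since all the real work has already been done in establishing that the projectivization map is a functor respecting both the weak and the strong structure.

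If one preferred a self-contained argument not routing through matroids, an alternative would be to use Theorem~\ref{$q$-strong map 2}: a $q$-strong $\sigma$ has $\sigma^{\#}$ preserving joins and sending atoms to atoms or zero, and one could try to deduce $\rho_{\mcN}(\sigma(V)) \le \rho_{\mcM}(V)$ directly from this by writing $V$ as a join of atoms and controlling heights in the lattice of flats. But this merely re-derives the matroid lemma in the $q$-setting and is strictly more work, so I would not take that route; the functorial proof above is cleaner and is the natural payoff of Section~4. The only thing to be careful about is to invoke Theorem~\ref{maps/$q$-maps} in both directions and to cite the matroid fact explicitly.

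\begin{proof}
By Theorem~\ref{maps/$q$-maps}, since $\sigma$ is $q$-strong, the induced map $\sigma_{\mcL} : P(\mcM) \rightarrow P(\mcN)$ is a strong map of matroids. As noted after Definition~\ref{Strong Map 1} (see \cite[Chap.~8, Lem.~8.1.7]{RotaMatroid}), every strong map of matroids is a weak map, so $\sigma_{\mcL}$ is weak. Applying Theorem~\ref{maps/$q$-maps} once more, the fact that $\sigma_{\mcL}$ is weak implies that $\sigma$ is $q$-weak.
\end{proof}
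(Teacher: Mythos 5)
Your proof is correct, but it takes a different route from the paper's. You reduce everything to matroids: $\sigma$ $q$-strong $\Rightarrow$ $\sigma_{\mcL}$ strong (Theorem \ref{maps/$q$-maps}) $\Rightarrow$ $\sigma_{\mcL}$ weak (the classical fact cited from \cite[Chap.~8, Lem.~8.1.7]{RotaMatroid}) $\Rightarrow$ $\sigma$ $q$-weak (Theorem \ref{maps/$q$-maps} again). The paper instead gives exactly the ``self-contained'' argument you mention and decline: it takes $V \leq E_1$ with $\rho_{\mcM}(V) = s$, sets $F = \cl_{\mcM}(V)$, writes $F$ as a join of $s$ atoms in the geometric lattice $\F_{\mcM}$, and uses Theorem \ref{$q$-strong map 2} to see that $\sigma^{\#}(F)$ is a join of at most $s$ atoms, hence has height at most $s$; monotonicity of $\rho_{\mcN}$ then gives $\rho_{\mcN}(\sigma(V)) \leq \rho_{\mcN}(\sigma^{\#}(F)) \leq s$. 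Your version is shorter and is arguably the natural payoff of the functoriality, at the cost of leaning on the external matroid lemma; the paper's version keeps the argument internal to $q$-matroids (modulo Theorem \ref{$q$-strong map 2}, which itself was proved via projectivization) and illustrates how the atom decomposition controls rank. Both are valid, and your invocation of Theorem \ref{maps/$q$-maps} in both directions is exactly the point that needs care and you handle it correctly.
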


\begin{proof}
Let $V \leq E_1$, such that $\rho_{\mcM}(V) = s$ and let $F := \cl_{\mcM}(V)$. Note that $V \subseteq F$ and $\rho_{\mcM}(F) = \rho_{\mcM}(V) =h(F)$, where $h$ is the height function of the lattice of flats as in Proposition \ref{flat $q$-matroid}.
Because $\F_{\mcM}$ is a geometric lattice, and $F$ has height $s$, then  $F = \bigvee_{i=1}^s a_i$ where $a_i$ are atoms of $\F_{\mcM}$.
Since $\sigma$ is a $q$-strong map, by Theorem \ref{$q$-strong map 2} we get, $\sigma^{\#}(F) = \sigma^{\#}(\bigvee_{i=1}^s a_i) = \bigvee_{i=1}^s \sigma^{\#}(a_i)$. 
Because $a_i$ are atoms of $\F_M$,by Theorem \ref{$q$-strong map 2}, $\sigma^{\#}(a_i)$ must be an atom or the zero of $\F_{\mcN}$ and that for all $1 \leq i \leq s$. Hence $\sigma^{\#}(F)$ is be the join of at most $s$-atoms, which implies $ \rho_{\mcN}(\sigma^{\#}(F))= h(\sigma^{\#}(F))  \leq s$. 
Finally, since $V \leq F$ and $\sigma$ is an $\mcL$-map, then $\sigma(V) \leq \sigma(F) \leq \sigma^{\#}(F)$, where the last containment follows from the definition of $\sigma^{\#}$. 
Therefore by the monotonicity property of $\rho_{\mcN}$ we get  $\rho_{\mcN}(\sigma(V)) \leq \rho_{\mcN}(\sigma^{\#}(F)) \leq s = \rho_{\mcM}(V)$. Since this holds true for all $V \leq E_1$, $\sigma$ must be $q$-weak.
\end{proof}

\section{The Characteristic Polynomial}
The characteristic polynomial is a useful invariant for both matroids and $q$-matroids. For the former it was intensively studied over the years, see for example \cite{welsh2010matroid, zaslavsky1987}. The latter was more recently introduced for $q$-polymatroids \cite{BCIJ21}, and was used to establish a weaker version of the Assmus-Mattson Theorem. However, in this paper, we are only interested in the characteristic polynomial of $q$-matroids.  

Before defining the characteristic polynomial, we recall the definition of the M\"obius function which will often be used throughout the section. 

\begin{definition}\label{Mobius function}
Let $(P , \leq )$ be a finite partially ordered set. The M\"obius function for $P$ is defined via the recursive formula
$$\mu_P(x,y) := \begin{cases} 1 & \textit{if } x = y, \\ - \sum_{x \leq z \lneq y} \mu_P(x,z) & \textit{if } x < y, \\
0 & \textup{otherwise.} \end{cases}$$
\end{definition}

We use the subscript of $\mu$ to distinguish between the M\"obius functions of different posets.
If the underlying poset is clear, the subscript may be omitted. 
We now define the characteristic polynomial of a matroid. 

\begin{definition}
Let $M = (S, r)$ be a matroid. The characteristic polynomial of $M$ is defined as follow:
$$\chi_M(x) = \sum_{A \subseteq S} (-1)^{|A|}x^{r(S) - r(A)}.$$
\end{definition}

It is well known that if a matroid $M$ contains a loop, its characteristic polynomial is identically 0. On the other hand if $M$ is loopless, then the characteristic polynomial of $M$ is fully determined by the lattice of flats. 
Furthermore, one can recursively define the characteristic polynomial of a matroid in terms of the characteristic polynomial of its minors. We summarize this in the following theorem. Proofs can be found in \cite[Sec.3]{welsh2010matroid} and \cite[Sec.7.1]{zaslavsky1987}.

\begin{theorem}\label{char matroid}
Let $M = (S, r)$ be a matroid and $\F$ be its lattice of flats. If $M$ contains a loop then $\chi_M(x) = 0$. If $M$ has no loops, then 
$$\chi_M(x) = \sum_{F \in \F} \mu_{\F}(0,F) x^{r(S) - r(F)}.$$ 

Furthermore for $e \in S$, 
$$\chi_M(x) = \begin{cases} \chi_{M \setminus e}(x) \chi_{M/e}(x) & \textup{if } e \textup{ is a coloop,}\\
\chi_{M\setminus e}(x) - \chi_{M/e}(x)  & \textup{otherwise.}
\end{cases}$$

\end{theorem}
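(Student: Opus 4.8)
The plan is to prove the three assertions of Theorem \ref{char matroid} in sequence, since each uses the previous one. First, suppose $M$ contains a loop $e$. Then for every $A \subseteq S-e$ the sets $A$ and $A \cup e$ have the same rank (adding a loop never increases rank, by submodularity and boundedness), so the terms indexed by $A$ and $A \cup e$ in $\sum_{A \subseteq S}(-1)^{|A|}x^{r(S)-r(A)}$ cancel in pairs. Hence $\chi_M(x)=0$. This is the easy warm-up.

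Next, assume $M$ is loopless, and rewrite the defining sum by grouping subsets according to their closure. For a fixed flat $F \in \F$, the subsets $A$ with $\cl_M(A)=F$ all satisfy $r(A)=r(F)=h(F)$, so
\begin{align*}
\chi_M(x) = \sum_{F \in \F}\left(\sum_{\substack{A \subseteq S\\ \cl_M(A)=F}} (-1)^{|A|}\right) x^{r(S)-r(F)}.
\end{align*}
The inner coefficient should be shown to equal $\mu_{\F}(0,F)$. The standard route is Möbius inversion on the lattice of flats: set $g(F) := \sum_{A : \cl_M(A) \subseteq F}(-1)^{|A|} = \sum_{A \subseteq F}(-1)^{|A|}$, which is $0$ unless $F$ is the zero flat (the set of loops, here empty, so $F=\hat 0$), in which case it is $1$; equivalently $g(F)=\delta_{F,\hat 0}$ since $M$ is loopless. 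Writing $f(F)$ for the inner sum (subsets with closure exactly $F$), we have $g(F)=\sum_{F' \le F} f(F')$, so Möbius inversion over $\F$ gives $f(F)=\sum_{F'\le F}\mu_{\F}(F',F)g(F')=\mu_{\F}(\hat 0,F)$. Substituting back yields the flat-sum formula. The one point to be careful about: $A=\emptyset$ has closure the zero flat and $r(\emptyset)=0$, consistent with $\mu_{\F}(\hat 0,\hat 0)=1$; and looplessness is exactly what makes the zero flat equal to $\emptyset$'s closure and makes $g$ supported only there.

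Finally, for the deletion--contraction recursion, fix $e \in S$ and split $\sum_{A\subseteq S}(-1)^{|A|}x^{r(S)-r(A)}$ into the sum over $A \not\ni e$ and the sum over $A \ni e$, writing $A = B \cup e$ with $B \subseteq S-e$ in the second. The first sum is almost $\chi_{M\setminus e}$ up to the exponent normalization $r(S)$ versus $r_{M\setminus e}(S-e)$. The second, using $r(B\cup e) = r_{M/e}(B) + r(e)$, is almost $\pm\chi_{M/e}$ up to normalization. The two cases arise from whether $r(S-e)=r(S)$ (then $e$ is not a coloop, $r(e)=1$ if $e$ is not a loop, and the exponents line up so that $\chi_M = \chi_{M\setminus e} - \chi_{M/e}$; the loop case is already covered since both sides are $0$) or $r(S-e)=r(S)-1$ (then $e$ is a coloop, $r(e)=1$, $r(S)-r(B) = (r_{M\setminus e}(S-e)-r_{M\setminus e}(B)) + 1$ and $r(S)-r(B\cup e) = (r_{M\setminus e}(S-e)-r_{M\setminus e}(B))$... here one must instead match against $\chi_{M/e}$, whose ground-set rank is $r(S)-1$, obtaining the product formula). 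I expect the bookkeeping of the exponents — tracking $r(S)$, $r(S-e)$, $r_{M\setminus e}(S-e)$, $r_{M/e}(S-e)$ and the value $r(e)$ across the coloop/non-coloop/loop trichotomy — to be the only real obstacle; everything else is formal. Since the statement cites \cite{welsh2010matroid} and \cite{zaslavsky1987}, it would also be legitimate simply to invoke those references, but the argument above is short enough to include.
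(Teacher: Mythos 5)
Your treatment of the first two assertions is correct and matches the standard arguments (the paper itself does not prove this theorem but cites Welsh and Zaslavsky; its proofs of the $q$-analogues, Theorems \ref{loop char 0} and \ref{char flats}, run an induction on the rank of flats that is equivalent to your M\"obius inversion over $\F$). The non-coloop case of the recursion is also fine: splitting the defining sum over $e\notin A$ and $e\in A$ gives $\chi_M(x)=x^{\,r(S)-r(S-e)}\chi_{M\setminus e}(x)-\chi_{M/e}(x)$, and the exponent vanishes exactly when $e$ is not a coloop.

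The gap is the coloop case. Carrying out the bookkeeping you describe gives $\chi_M(x)=x\,\chi_{M\setminus e}(x)-\chi_{M/e}(x)$; since $r(B\cup e)=r(B)+1$ for every $B\subseteq S-e$ when $e$ is a coloop, one has $M\setminus e=M/e$, so this equals $(x-1)\chi_{M/e}(x)$. That is the correct classical formula, but it is \emph{not} the product $\chi_{M\setminus e}(x)\chi_{M/e}(x)$ asserted in the statement, and no amount of exponent bookkeeping will produce that product: $\chi_M$ has degree $r(S)$ while $\chi_{M\setminus e}\chi_{M/e}$ has degree $2(r(S)-1)$, so the two differ whenever $r(S)\neq 2$ (for $M=U_{3,3}$ one gets $(x-1)^3$ versus $(x-1)^4$). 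The phrase ``obtaining the product formula'' is therefore a step that fails; the theorem as printed misquotes the standard result, which reads $\chi_M(x)=(x-1)\chi_{M/e}(x)$ (equivalently $(x-1)\chi_{M\setminus e}(x)$) for a coloop, and your computation, pushed to the end, proves that corrected version rather than the printed one. You should flag the discrepancy explicitly instead of asserting the match; note also that the error is not harmless downstream, since Proposition \ref{intermediate form} and the theorem following it invoke the coloop case in exactly this product form.
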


Similarly to matroids, the characteristic polynomial of a $q$-matroid $\mcM$ is identically 0 if $\mcM$ contains a loop, and is fully determined by the lattice of flats otherwise. This was in fact shown by Whittle in \cite{whittle1992characteristic}, where the author generalized the result to any weighted lattice endowed with a closure operator. However for self containment purposes proofs of those facts will be included in this paper.
Furthermore, we use the projectivization matroid to find a recursive formula for the characteristic polynomial of $q$-matroids. The characteristic polynomial of a $q$-matroid is defined in the following way.

\begin{definition}\cite[Def. 22]{BCIJ21}
Let $\mcM = (E, \rho)$ be a $q$-matroid and $\mcL(E)$ be the subspace lattice of $E$. The \emph{characteristic polynomial} is defined as 
$$\chi_{\mcM}(x):= \sum_{V \leq E} \mu_{\mcL(E)}(0, V)x^{\rho(E) - \rho(V)}.$$
\end{definition}

We state a few straightforward lemmas that will be useful later on. 

\begin{lemma}\label{space of loops} 
Let $\mcM = (E, \rho)$ be a $q$-matroid, $\F_{\mcM}$ its lattice of flats, and $L := \{ e \in E \, | \, \rho(\langle e \rangle ) = 0\}$. Then $L$ is a subspace and $\rho(L) = 0$. It is called the \emph{the subspace of loops} of $\mcM$. Furthermore $L \leq F$ for all $F \in \F_{\mcM}$.
\end{lemma}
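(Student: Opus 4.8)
The plan is to verify each of the three assertions --- that $L$ is a subspace, that $\rho(L)=0$, and that $L\leq F$ for every flat $F$ --- directly from the rank axioms (qR1)--(qR3) and the definition of a flat. These are exactly the analogues of the standard facts about the set of loops of an ordinary matroid, and the proofs transfer essentially verbatim once one replaces sums of sets by sums of subspaces.

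First I would show $L$ is a subspace. Clearly $0\in L$ since $\langle 0\rangle = \{0\}$ and $\rho(\{0\})=0$ by (qR1). It remains to show $L$ is closed under addition (closure under scalars is immediate, as $\langle \lambda e\rangle = \langle e\rangle$ for $\lambda\neq 0$). Take $e_1,e_2\in L$. The key step is to apply submodularity (qR3) to $V=\langle e_1\rangle$ and $W=\langle e_2\rangle$: this gives $\rho(\langle e_1\rangle + \langle e_2\rangle) + \rho(\langle e_1\rangle\cap\langle e_2\rangle) \leq \rho(\langle e_1\rangle)+\rho(\langle e_2\rangle)=0$, and since each term on the left is nonnegative by (qR1), we conclude $\rho(\langle e_1,e_2\rangle)=0$. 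Then for any scalars $\lambda_1,\lambda_2$ we have $\langle \lambda_1 e_1+\lambda_2 e_2\rangle \leq \langle e_1,e_2\rangle$, so by monotonicity (qR2) $\rho(\langle \lambda_1 e_1 + \lambda_2 e_2\rangle)\leq \rho(\langle e_1,e_2\rangle)=0$, whence $\lambda_1 e_1+\lambda_2 e_2\in L$. Thus $L$ is a subspace.

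Next, $\rho(L)=0$. This follows by an easy induction on $\dim L$ using the same submodularity trick: pick a basis $e_1,\dots,e_k$ of $L$, and show $\rho(\langle e_1,\dots,e_j\rangle)=0$ for all $j$ by applying (qR3) to $\langle e_1,\dots,e_{j-1}\rangle$ and $\langle e_j\rangle$ at each step, exactly as above; alternatively, note that the argument in the previous paragraph generalizes immediately to any finite set of elements of $L$. Finally, for the last claim, let $F\in\F_{\mcM}$ and suppose for contradiction that some $e\in L$ is not in $F$. By the definition of a flat, $\rho(F+\langle e\rangle) > \rho(F)$. But submodularity applied to $F$ and $\langle e\rangle$ gives $\rho(F+\langle e\rangle)+\rho(F\cap\langle e\rangle)\leq \rho(F)+\rho(\langle e\rangle) = \rho(F)+0$, so $\rho(F+\langle e\rangle)\leq\rho(F)$, a contradiction. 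Hence $L\leq F$.

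I do not anticipate any genuine obstacle here; the statement is a routine consequence of submodularity, and the only mild subtlety is being careful that $L$ is genuinely closed under addition (not just spanned by loops), which is precisely what the submodularity computation in the first step delivers. The induction for $\rho(L)=0$ could alternatively be phrased as a single application of submodularity to a chain of subspaces, but either presentation is short.
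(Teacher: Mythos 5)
Your proof is correct and follows essentially the same route as the paper: the key step for $L\leq F$ is the identical submodularity inequality $\rho(F+\langle e\rangle)\leq \rho(F)+\rho(\langle e\rangle)-\rho(F\cap\langle e\rangle)$, which the paper phrases via the closure operator ($L\leq \cl_{\mcM}(V)$ for all $V$) and you phrase as a direct contradiction with the flat definition. The only difference is that the paper cites \cite[Lemma 11]{JP18} for the first two claims, whereas you prove them from scratch with the standard submodularity-plus-monotonicity argument; both versions are fine.
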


\begin{proof}
The first statement was proven in \cite[Lemma 11]{JP18}. For the second statement let $V \leq E$. By the monotonicity and submodularity properties of the rank function, $\rho(V) \leq \rho(V + L) \leq \rho(V) + \rho(L) - \rho(V \cap L) \leq \rho(V)$. Hence equality holds throughout and $L \leq \cl_{\mcM}(V)$. Since this is true for all $V \leq E$ then $L \leq F$ for all flats $F$ of $\mcM$.
\end{proof}

\begin{lemma}\label{Mobius flat decomp}
Let $\mcM= (E, \rho)$ be a $q$-matroid, $\F$ its lattice of flats and $\mcL$ the subspace lattice of $E$. Then
$$\chi_{\mcM}(x) = \sum_{F \in \F} \hspace{.1cm} \sum_{V \, : \, \cl_{\mcM}(V) = F} \mu_{\mcL}(0, V)  x^{\rho(E) - \rho(F)}.$$
\end{lemma}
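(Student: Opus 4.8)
The plan is to split the defining sum of $\chi_{\mcM}(x)$ according to the closure of each subspace. Since $\rho$ is uniquely determined by $\F_{\mcM}$ via $\rho(V)=\rho(\cl_{\mcM}(V))=h(\cl_{\mcM}(V))$ (Proposition \ref{flat $q$-matroid}), every $V\leq E$ satisfies $\rho(V)=\rho(F)$ where $F:=\cl_{\mcM}(V)$. The key observation is that the closure operator partitions $\mcL(E)$: each $V\leq E$ lies in exactly one ``fiber'' $\{V\,:\,\cl_{\mcM}(V)=F\}$ for a unique flat $F\in\F_{\mcM}$, because $\cl_{\mcM}(V)$ is always a flat and $\cl_{\mcM}(V)=V$ precisely when $V$ is a flat.

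So first I would recall from Proposition \ref{flat $q$-matroid} that $\rho(V)=\rho(\cl_{\mcM}(V))$ for all $V\leq E$, and that $\cl_{\mcM}:\mcL(E)\to\F_{\mcM}$ is a well-defined surjection. Then I would write
\begin{align*}
\chi_{\mcM}(x) &= \sum_{V\leq E}\mu_{\mcL}(0,V)x^{\rho(E)-\rho(V)}\\
&= \sum_{F\in\F}\ \sum_{\substack{V\leq E\\ \cl_{\mcM}(V)=F}}\mu_{\mcL}(0,V)x^{\rho(E)-\rho(V)}\\
&= \sum_{F\in\F}\ \sum_{\substack{V\leq E\\ \cl_{\mcM}(V)=F}}\mu_{\mcL}(0,V)x^{\rho(E)-\rho(F)},
\end{align*}
where the middle equality is just reindexing the sum over the fibers of $\cl_{\mcM}$ (every $V$ is counted exactly once since $\cl_{\mcM}(V)$ is a uniquely determined flat), and the last equality substitutes $\rho(V)=\rho(F)$, which is legitimate because the exponent no longer depends on the particular $V$ in the fiber.

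There is essentially no obstacle here — the result is a bookkeeping reorganization of a finite sum combined with the already-established fact $\rho(V)=\rho(\cl_{\mcM}(V))$. The only point requiring a sentence of care is that the fibers of $\cl_{\mcM}$ genuinely cover $\mcL(E)$ without overlap: $\cl_{\mcM}(V)\in\F_{\mcM}$ always (a basic property of the closure operator, since $\cl_{\mcM}(V)=\bigcap\{F\in\F_{\mcM}\,:\,V\leq F\}$ is an intersection of flats, hence a flat by (qF2)), and $\cl_{\mcM}$ is a function so each $V$ belongs to exactly one fiber. That suffices to split and recombine the sum as above.
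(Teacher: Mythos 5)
Your proof is correct and follows exactly the same route as the paper's: partition the defining sum over the fibers of the closure operator and substitute $\rho(V)=\rho(\cl_{\mcM}(V))$ in the exponent. Your version is just slightly more explicit about why the fibers partition $\mcL(E)$, which the paper leaves implicit.
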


\begin{proof}
Let $V \leq E$, then $\cl_{\mcM}(V) \in \F$  and $\rho(V) = \rho(\cl_{\mcM}(V))$. Hence we get
\begin{align*}
    \chi_M(x)   &= \sum_{V \leq E} \mu_{\mcL}(0, V)x^{\rho(E) - \rho(V)}\\
                &= \sum_{F \in \F}\hspace{.1cm}  \sum_{V \, : \, \cl_{\mcM}(V) = F} \mu_{\mcL}(0, V) x^{\rho(E) - \rho(F)}. \quad \qedhere
\end{align*}
\end{proof}

We can now show that if a $q$-matroid contains a loop its characteristic polynomial is identically 0, whereas if it is loopless then the characteristic polynomial is determined by the lattice of flats. 

\begin{theorem}\label{loop char 0}[see also \cite[Prop. 3.4]{whittle1992characteristic}]
Let $\mcM$ be a $q$-matroid such that $\mcM$ contains a loop then $\chi_{\mcM}(x) = 0$.
\end{theorem}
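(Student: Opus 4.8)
The plan is to exploit Lemma \ref{space of loops} together with the decomposition in Lemma \ref{Mobius flat decomp}. Since $\mcM$ contains a loop, the subspace of loops $L$ is nonzero, and by Lemma \ref{space of loops} we have $\rho(L) = 0$ and $L \leq F$ for every flat $F \in \F_{\mcM}$. In particular $L \leq \cl_{\mcM}(\{0\})$, so the closure of the zero subspace is exactly $L$ and is not $\{0\}$. The key observation is that the coefficient of $x^{\rho(E)}$ (the top-degree-contributing term, coming from the flat $F = L = \cl_{\mcM}(\{0\})$) and, more generally, every coefficient in the flat-indexed sum of Lemma \ref{Mobius flat decomp} vanishes, because of the following M\"obius-function identity: for any $F \in \F_{\mcM}$,
$$\sum_{V \,:\, \cl_{\mcM}(V) = F} \mu_{\mcL}(0,V) = 0.$$

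To prove this identity, I would group the subspaces by closure and use the defining recursion of the M\"obius function. Fix $F \in \F_{\mcM}$. The subspaces $V$ with $\cl_{\mcM}(V) = F$ are precisely the $V \leq F$ with $V \not\leq F'$ for any flat $F' \lneq F$; equivalently, they are the $V$ in the interval $[L, F]$ of $\mcL(E)$ (since $L \leq V$ is forced by $\cl_{\mcM}(V) = F \supseteq L$... wait, one must check $L \leq V$: actually $\cl_{\mcM}(V) = F$ does not force $L \leq V$, but $\cl_{\mcM}(V) = \cl_{\mcM}(V + L)$ by the argument in Lemma \ref{space of loops}). The cleaner route: since $L \neq \{0\}$, pick $0 \neq e \in L$. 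Then for every $V \leq E$, $\cl_{\mcM}(V) = \cl_{\mcM}(V + \langle e\rangle)$ because $e$ is a loop. Consider the sum $\sum_{V \leq E}\mu_{\mcL}(0,V) x^{\rho(E)-\rho(V)}$ and pair up each $V$ with $V + \langle e \rangle$; however the M\"obius values do not cancel pairwise. Instead, the robust argument is: by the standard identity $\sum_{\{0\} \leq V \leq W} \mu_{\mcL}(0,V) = 0$ for any $W \gneq \{0\}$, applied after reorganizing. Concretely, I would show
$$\chi_{\mcM}(x) = \sum_{F \in \F_{\mcM}} x^{\rho(E) - \rho(F)} \sum_{V \,:\, \cl_{\mcM}(V)=F} \mu_{\mcL}(0,V),$$
and that the inner sum over all $V$ with $\cl_{\mcM}(V) = F$ equals $\sum_{L \leq V \leq F, \, V \text{ contains } \ldots}$. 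The honest simplest approach: apply Weisner's theorem / the crosscut-type identity. Since $e \in L \leq F$ for every flat, and $\{0\} \lneq \langle e \rangle$, Weisner's theorem in the lattice $\mcL(E)$ gives, for each fixed flat $F$, a vanishing of the relevant partial M\"obius sum.

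Here is the approach I will actually commit to. Start from Lemma \ref{Mobius flat decomp}:
$$\chi_{\mcM}(x) = \sum_{F \in \F_{\mcM}} x^{\rho(E) - \rho(F)} \Bigl( \sum_{V \,:\, \cl_{\mcM}(V) = F} \mu_{\mcL(E)}(0, V)\Bigr).$$
It suffices to prove each inner sum is zero. Fix $F \in \F_{\mcM}$ and a nonzero loop $e$, so $\langle e\rangle \leq L \leq F$. For $V \leq F$ one has $\cl_{\mcM}(V) = F$ iff $V$ is not contained in any flat strictly below $F$; denote the set of such $V$ by $\mcS_F$. Note $\mcS_F$ is an up-set inside the interval $[\{0\}, F]$ closed under adding $\langle e\rangle$, and $\langle e\rangle$ itself is a minimal nonzero element with $\langle e \rangle \leq V$ for... no. The decisive point: every $V \in \mcS_F$ together with $V + \langle e\rangle$ both lie in $\mcS_F$, but rather than pair, I invoke Weisner's theorem: for a finite lattice $\mcL$ with $\hat 0 \lneq a$ and any $b$, $\sum_{x \,:\, x \vee a = b} \mu_{\mcL}(\hat 0, x) = 0$. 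Apply this in $\mcL(E)$ with $a = \langle e \rangle$ and, summing over the finitely many flats, reorganize $\sum_{V} \mu(0,V) x^{\rho(E)-\rho(V)}$ by the value of $V + \langle e\rangle$'s closure. The main obstacle is getting the bookkeeping of Weisner's theorem to line up exactly with the flat-decomposition so that each inner sum cancels; once the indexing is set up correctly the vanishing is immediate, giving $\chi_{\mcM}(x) = 0$.
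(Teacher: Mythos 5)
Your reduction is the same as the paper's: both arguments start from Lemma \ref{Mobius flat decomp} and reduce the theorem to the claim that $\sum_{V:\,\cl_{\mcM}(V)=F}\mu_{\mcL}(0,V)=0$ for every flat $F$. The gap is that you never actually prove this claim. Your write-up cycles through three candidate arguments (identifying the index set with an interval $[L,F]$, pairing $V$ with $V+\langle e\rangle$, and Weisner's theorem), correctly discards the first two, and then ends by conceding that ``the main obstacle is getting the bookkeeping of Weisner's theorem to line up'' --- that is, the decisive step is left undone. The missing observation that makes the Weisner route work is this: since $e$ is a loop, $\cl_{\mcM}(V)=\cl_{\mcM}(V+\langle e\rangle)$ for every $V$, so the set $\{V:\cl_{\mcM}(V)=F\}$ is the disjoint union, over those $W$ with $\langle e\rangle\leq W$ and $\cl_{\mcM}(W)=F$, of the \emph{complete} Weisner fibers $\{V: V\vee\langle e\rangle = W\}$; each fiber sums to zero by Weisner's theorem applied in the interval $[\{0\},W]$ with $a=\langle e\rangle\gneq\{0\}$, hence so does the whole inner sum. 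Without that sentence the argument does not close.

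For comparison, the paper avoids Weisner entirely and proves the same identity by induction on $\rho(F)$: the base case $F=\cl_{\mcM}(0)$ uses that $\{0\}\subsetneq F$ (this is exactly where the loop hypothesis enters), so $\sum_{V\leq F}\mu_{\mcL}(0,V)=0$ by the defining recursion of the M\"obius function; the inductive step writes $0=\sum_{V\leq F}\mu_{\mcL}(0,V)$, splits the sum according to $\cl_{\mcM}(V)$, and kills every term with $\cl_{\mcM}(V)\lneq F$ by the inductive hypothesis. That argument uses nothing beyond Definition \ref{Mobius function} and is the more elementary route; your Weisner approach is also valid once the fiber decomposition above is supplied, and has the mild advantage of being uniform in $F$ (no induction), but as submitted it is an outline with the central verification missing.
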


\begin{proof}
From Lemma \ref{Mobius flat decomp}, we know 
$$\chi_{\mcM}(x) = \sum_{F \in \F}  \sum_{V \, : \, \cl(V) = F} \mu_{\mcL}(0, V)  x^{\rho(E) - \rho(F)}.$$

We show that for all flat $F \in \F$, 
\begin{equation}\label{fl}
\sum_{V \, : \, \cl(V) = F} \mu_{\mcL}(0, V) = 0.
\end{equation}

We proceed by induction on the rank value of flats.
Let $F \in \F_{\mcM}$ such that $\rho(F) = 0$, i.e $F = \cl_{\mcM}(0)$. Since $\mcM$ contains a loop, $\{0\} \subsetneq F$. Hence by Definition \ref{Mobius function},
$$\sum_{V \, : \, \cl(V) = F} \mu_{\mcL}(0, V) = \sum_{0 \leq V \leq F}\mu_{\mcL}(0, V) = 0.$$

Assume (\ref{fl}) holds for all $F \in \F$ such that $\rho(F) \leq k-1$. 

Fix $F \in \F$ such that $\rho(F) = k$. Then 
\begin{align*}
   0 &=  \sum_{V \leq F} \mu_{\mcL} (0, V) \\
   &= \sum_{V \, : \, \cl(V) = F} \mu_{\mcL} (0,V) + \sum_{F' \lneq F, F' \in \F} \hspace{.1cm} \sum_{V \, : \, \cl(V) = F'} \mu_{\mcL}(0,V)\\
   &= \sum_{V \, : \, \cl(V) = F} \mu_{\mcL} (0,V),
\end{align*}
where the last equality follows by induction hypothesis. Therefore (\ref{fl}) holds, and $\chi_{\mcM}(x) = 0.$
\end{proof}

\begin{theorem}\label{char flats}[see also \cite[Thm. 3.2]{whittle1992characteristic}]
Let $\mcM = (E, \rho)$ be a loopless $q$-matroid and $\F$ its lattice of flats. Then 
$$\chi_{\mcM}(x) = \sum_{F \in \F} \mu_{\F}(0, F) x^{\rho(E) - \rho(F)}.$$
\end{theorem}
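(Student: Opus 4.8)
The plan is to deduce this from Lemma~\ref{Mobius flat decomp}, which already writes $\chi_{\mcM}(x) = \sum_{F \in \F} g(F)\, x^{\rho(E)-\rho(F)}$, where I set $g(F) := \sum_{V \,:\, \cl_{\mcM}(V) = F} \mu_{\mcL}(0, V)$ (the sum is finite since $\mcL(E)$ is). Comparing with the claimed formula, it suffices to prove the combinatorial identity $g(F) = \mu_{\F}(0, F)$ for every $F \in \F$. I would prove this by induction on the height $h(F)$ of $F$ in the lattice of flats $\F$.

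For the base case $h(F) = 0$: since $\mcM$ is loopless, Lemma~\ref{space of loops} gives that the subspace of loops is $\{0\}$, so $\cl_{\mcM}(\{0\}) = \{0\}$ and $\{0\}$ is the bottom element of $\F$; hence $F = \{0\}$, and the only subspace $V$ with $\cl_{\mcM}(V) = \{0\}$ is $V = \{0\}$ itself (as $V \le \cl_{\mcM}(V)$). Thus $g(\{0\}) = \mu_{\mcL}(0,0) = 1 = \mu_{\F}(0,0)$. For the inductive step, fix $F \in \F$ with $h(F) \ge 1$, and assume $g(F') = \mu_{\F}(0, F')$ for all flats $F' \lneq F$. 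The crucial point is that, $F$ being a flat, for a subspace $V \le E$ one has $V \le F \iff \cl_{\mcM}(V) \le F$, and $\cl_{\mcM}(V) \in \F$; so partitioning the subspaces of $F$ by the value of their closure yields
$$\sum_{V \le F} \mu_{\mcL}(0, V) \;=\; \sum_{F' \in \F,\ F' \le F}\ \sum_{V \,:\, \cl_{\mcM}(V) = F'} \mu_{\mcL}(0, V) \;=\; \sum_{F' \in \F,\ F' \le F} g(F').$$
By the defining summation property of the M\"obius function of $\mcL(E)$ (Definition~\ref{Mobius function}), the left-hand side is $0$ because $F \ne \{0\}$. Hence $g(F) = -\sum_{F' \lneq F} g(F') = -\sum_{F' \lneq F} \mu_{\F}(0, F') = \mu_{\F}(0, F)$, the last step being exactly the recursion defining $\mu_{\F}(0, F)$. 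This completes the induction, and substituting $g(F) = \mu_{\F}(0,F)$ back into Lemma~\ref{Mobius flat decomp} gives the formula.

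I do not expect a real obstacle here: this is the $q$-analogue of the classical matroid argument, and the only delicate points are bookkeeping ones. Specifically, one must be careful that the closure map $\cl_{\mcM}$ really does set up a clean partition of $\{V \le E : V \le F\}$ indexed by $\{F' \in \F : F' \le F\}$ --- this uses both that $F$ is a flat and that $\cl_{\mcM}$ is monotone and extensive --- and that looplessness is precisely what makes $\{0\}$ the bottom flat, so that the base case and the dichotomy ``$F = \{0\}$ versus $F \ne \{0\}$'' line up with the M\"obius recursion. (Dropping looplessness, the same computation instead reproduces Theorem~\ref{loop char 0}.)
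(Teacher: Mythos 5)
Your proposal is correct and follows essentially the same route as the paper: reduce via Lemma~\ref{Mobius flat decomp} to the identity $\sum_{V:\,\cl(V)=F}\mu_{\mcL}(0,V)=\mu_{\F}(0,F)$, then induct on the rank (equivalently, height) of the flat, using looplessness for the base case $F=\{0\}$ and the vanishing of $\sum_{V\le F}\mu_{\mcL}(0,V)$ together with the partition of $\{V\le F\}$ by closure for the inductive step. Your write-up is, if anything, slightly more explicit about why $\{V : V\le F\}=\{V:\cl_{\mcM}(V)\le F\}$ requires $F$ to be a flat, but there is no substantive difference.
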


\begin{proof}
Let $\mcL = \mcL(E)$.
By Lemma \ref{Mobius flat decomp} we know 
$$\chi_{\mcM}(x) = \sum_{F \in \F} \hspace{.1cm} \sum_{V \, : \, \cl(V) = F} \mu_{\mcL}(0, V)x^{\rho(E) - \rho(F)}.$$

Hence, for all $F \in \F$ we must show 
\begin{equation}\label{fl2}
\sum_{V \, : \, \cl(V) = F} \mu_{\mcL}(0, V) = \mu_{\F}(0,F).
\end{equation}

We once again proceed by induction on the rank of the flats of $\mcM$. 
Since $\mcM$ is loopless, $\{0\} \in \F$. Let $F = \{0\}$, then  (\ref{fl2}) follows trivially from Definition \ref{Mobius function}. Now assume (\ref{fl2}) holds true for all $F \in \F$ such that $\rho(F) \leq k-1$.
Fix a flat $F \in \F$ with $\rho(F) = k$. Then

\begin{align*}
    \mu_{\F}(0,F) &= - \sum_{F' \lneq F, F' \in \F} \mu_{\F}(0, F')\\
    &= -\sum_{F' \lneq F, F' \in \F} \sum_{V \, : \, \cl(V) = F'} \mu_{\mcL}(0,V) \\
    &= - \sum_{V \, : \,  \cl(V) \lneq F} \mu_{\mcL}(0,V) \\
    &= \sum_{V \, : \, \cl(V) = F}\mu_{\mcL}(0,V),
\end{align*}
where the second equality follows from the induction hypothesis, and the last equality follows from Definition \ref{Mobius function}. This completes the proof. 
\end{proof}

As the next theorem shows, defining the characteristic polynomial in terms of the lattice of flats of the $q$-matroid allows us to link the characteristic polynomial of a $q$-matroid with that of its projectivization matroid.

\begin{theorem}\label{char $q$-matroid/matroid}
Let $\mcM$ be a $q$-matroid and $P(\mcM)$ be its projectivization matroid. Then 
$$\chi_{\mcM}(x) = \chi_{P(\mcM)}(x).$$
\end{theorem}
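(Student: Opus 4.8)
The plan is to split into two cases according to whether $\mcM$ has a loop, using the structural results already established for the characteristic polynomial.

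First I would dispatch the loop case. If $\mcM$ contains a loop, then by Theorem \ref{loop char 0} we have $\chi_{\mcM}(x) = 0$; and by Proposition \ref{loop/qloop}, $P(\mcM)$ also contains a loop, so by Theorem \ref{char matroid} we get $\chi_{P(\mcM)}(x) = 0$. Hence both sides vanish and agree. This case is immediate once the cited results are in hand.

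The substance is the loopless case. Suppose $\mcM$ is loopless; by Proposition \ref{loop/qloop} the projectivization matroid $P(\mcM)$ is loopless as well. Then Theorem \ref{char flats} gives
$$\chi_{\mcM}(x) = \sum_{F \in \F_{\mcM}} \mu_{\F_{\mcM}}(0,F)\, x^{\rho(E)-\rho(F)},$$
and Theorem \ref{char matroid} gives
$$\chi_{P(\mcM)}(x) = \sum_{G \in \F_{P(\mcM)}} \mu_{\F_{P(\mcM)}}(0,G)\, x^{r(\PE) - r(G)}.$$
I would then compare these two sums term by term via the projectivization map. By Lemma \ref{flats properties}, $F \mapsto P(F)$ is a lattice isomorphism $\F_{\mcM} \to \F_{P(\mcM)}$; since the M\"obius function of a poset depends only on the order structure, it is preserved under this isomorphism, so $\mu_{\F_{\mcM}}(0,F) = \mu_{\F_{P(\mcM)}}(0, P(F))$ for every $F \in \F_{\mcM}$. (Here one uses that a lattice isomorphism sends the bottom element to the bottom element.) For the exponents, recall $r(S) = \rho(\langle S \rangle)$ from Theorem \ref{$q$-matroid/matroid}; since $\langle P(F)\rangle = F$ for a subspace $F$ and $\langle \PE \rangle = E$, we have $r(P(F)) = \rho(F)$ and $r(\PE) = \rho(E)$, so $r(\PE) - r(P(F)) = \rho(E) - \rho(F)$. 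Therefore the substitution $G = P(F)$ identifies the two sums summand by summand, and $\chi_{\mcM}(x) = \chi_{P(\mcM)}(x)$.

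The only point requiring any care — and the mild "obstacle" — is the invariance of the M\"obius function under the lattice isomorphism $P$, i.e.\ spelling out that $\mu$ is determined by the recursion in Definition \ref{Mobius function} and that any poset isomorphism commutes with that recursion; everything else is bookkeeping with the identities already proved in Sections 3 and 5. One should also make sure to invoke Theorems \ref{loop char 0}, \ref{char flats}, and \ref{char matroid} explicitly so that the loop/loopless dichotomy is handled cleanly on both sides.
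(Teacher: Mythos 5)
Your proof is correct and follows essentially the same route as the paper: split on the loop/loopless dichotomy, invoke Theorems \ref{loop char 0}, \ref{char flats} and \ref{char matroid}, and transport the flat-indexed sum through the lattice isomorphism of Lemma \ref{flats properties}. The only cosmetic difference is that you justify $r(P(F)) = \rho(F)$ directly from $r(S) = \rho(\langle S\rangle)$, while the paper routes this through the height function of the lattice of flats; both are valid.
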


\begin{proof}
 By Proposition \ref{loop/qloop} $\mcM$ contains a loop if and only if $P(\mcM)$ contains a loop. Furthermore, by Theorem \ref{flats properties}, we know $\F_{\mcM} \cong \F_{P(\mcM)}$ as lattices. Due to Propositions \ref{flats matroid} and \ref{flat $q$-matroid}, $\rho_{\mcM}(F) = h_{\F_{\mcM}}(F) = h_{\F_{P(\mcM)}}(P(F)) = r_{P(\mcM)}(P(F))$ and this for all flats $F \in \F_{\mcM}$ and $P(F) \in \F_{P(\mcM)}$. Therefore, the result follows directly from Theorems \ref{char matroid}, \ref{loop char 0} and \ref{char flats}. 
\end{proof}

We furthermore get the following result when considering the contraction of $\mcM$ by a subspace $V \leq E$.

\begin{proposition}\label{char project equiv}
Let $\mcM = (E, \rho)$ be a $q$-matroid and $P(\mcM) = (\PE, r)$ its projectivization matroid. Then for all $V \leq E$, 
$$\chi_{\mcM / V}(x) = \chi_{P(\mcM) / P(V)}(x).$$
\end{proposition}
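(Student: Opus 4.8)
The plan is to reduce the statement for the contracted $q$-matroid to the already-established Theorem \ref{char $q$-matroid/matroid} applied to the right ambient $q$-matroid. The key observation is that both sides only depend on flats (via Theorems \ref{loop char 0}, \ref{char flats}, and \ref{char matroid}), so it suffices to match lattices of flats together with their height functions. First I would observe that, since the characteristic polynomial of any ($q$-)matroid with a loop vanishes, we may split into two cases according to whether $\mcM/V$ is loopless. By Proposition \ref{contraction loop} (second part), $\mcM/V$ is loopless iff $V$ is a flat of $\mcM$, and by Proposition \ref{loop/qloop} applied inside the relevant matroid, $P(\mcM)/P(V)$ is loopless iff $P(\mcM)/P(V)$ has no loop; I would want to check that the loop-status of $\mcM/V$ and of $P(\mcM)/P(V)$ coincide. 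This should follow because a loop of $\mcM/V$ is a $1$-dimensional $\langle \pi(x)\rangle$ with $\rho_{\mcM/V}(\langle\pi(x)\rangle)=0$, and under the projectivization of $\mcM/V$ this is exactly a loop of $P(\mcM/V)$ by Proposition \ref{loop/qloop}; so if I can identify $P(\mcM/V)$ with a deletion/contraction of $P(\mcM)$ related to $P(\mcM)/P(V)$, the loop cases align.

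The central step is therefore to relate $P(\mcM/V)$ with $P(\mcM)/P(V)$. Write $E = W \oplus V'$ is not what we want; rather, pick a complement $U$ of $V$ in $E$, so $E = V \oplus U$, and set $S$ to be a projective basis of $V$ (a set of $\dim V$ points of $\PE$ spanning $V$). Then Theorem \ref{contraction proj}, with the roles of $W$ and $V$ there played by $V$ and $U$ here, gives $P(\mcM/V) \cong (P(\mcM)/S)\setminus \mcQ_{U}^{*S}$, i.e.\ $P(\mcM/V)$ is obtained from $P(\mcM)/S$ by deleting all points outside $\PP U$ except those in $S$. On the other hand $P(\mcM)/P(V)$ is the contraction of $P(\mcM)$ by the full point set $P(V)$, which contains $S$. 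Using Proposition \ref{del cont commute} (contraction/deletion commute and compose), contracting by $P(V)$ equals contracting by $S$ and then contracting by $P(V)-S$; and contracting by a set of loops is the same as deleting it. Since every point of $P(V)-S$ lies in $\langle S\rangle$, it is a loop of $P(\mcM)/S$, so $P(\mcM)/P(V) = (P(\mcM)/S)\setminus(P(V)-S)$, which is precisely $(P(\mcM)/S)$ restricted to the complement of $P(V)-S$, i.e.\ to $\mcQ_{V}\cup S$. Thus $P(\mcM)/P(V)$ and $P(\mcM/V) \cong (P(\mcM)/S)\setminus\mcQ_U^{*S}$ differ only by which "useless" points (loops, or points projectively dependent on $S$) are retained. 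Deleting loops from a matroid changes neither the lattice of flats up to isomorphism nor the characteristic polynomial: if $M$ has loop set $Z$, then $\chi_M = \chi_{M\setminus Z}$ and $\F_{M\setminus Z}\cong \F_M$ (this is the $M_o\setminus\{o\}=M$ phenomenon of Remark \ref{loop/emptyset}, iterated). Hence $\chi_{P(\mcM/V)}(x) = \chi_{P(\mcM)/P(V)}(x)$ once one checks the two matroids have isomorphic flat lattices with matching heights — which already follows from Theorem \ref{contraction flat iso} in the case $V\in\F_{\mcM}$, and from the loop-deletion remark in general.

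Finally I would assemble the chain: $\chi_{\mcM/V}(x) = \chi_{P(\mcM/V)}(x)$ by Theorem \ref{char $q$-matroid/matroid} applied to the $q$-matroid $\mcM/V$, and $\chi_{P(\mcM/V)}(x) = \chi_{P(\mcM)/P(V)}(x)$ by the flat/loop analysis of the previous paragraph; combining gives the claim. The step I expect to be the main obstacle is cleanly justifying that passing between $P(\mcM)/P(V)$, $P(\mcM)/S$, and $(P(\mcM)/S)\setminus\mcQ_U^{*S}$ only removes loops (and hence preserves $\chi$ and the flat lattice): one must verify that every point in $P(V)-S$ is genuinely a loop of $P(\mcM)/S$, and that deleting a point already outside the span of a spanning set of a flat does not disturb the rank of any remaining set. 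Both are routine given Theorem \ref{$q$-matroid/matroid} (the rank of $T\subseteq\PE$ in $P(\mcM)$ is $\rho(\langle T\rangle)$) together with Proposition \ref{del cont commute}, but they require care to state precisely. An alternative, perhaps shorter, route that avoids the explicit point-set bookkeeping: use Theorem \ref{contraction flat iso} to get $\F_{\mcM/F}\cong\F_{P(\mcM)/P(F)}$ when $V=F$ is a flat, reduce the general case to this by replacing $V$ with $\cl_{\mcM}(V)$ (noting $\rho_{\mcM/V}$ and $\rho_{\mcM/\cl_{\mcM}(V)}$ agree up to the loop subspace and using Theorem \ref{loop char 0} to kill the non-flat case on both sides), and then invoke Theorem \ref{char flats} and Theorem \ref{char matroid} directly on the matched flat lattices.
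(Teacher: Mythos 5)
Your main route, as written, has two concrete errors; the ``alternative, perhaps shorter, route'' you mention at the end is in fact exactly the paper's proof and is the one to use. The first error is the claim that if $Z$ is the loop set of a matroid $M$ then $\chi_M = \chi_{M\setminus Z}$. This is false: by Theorem \ref{char matroid} a matroid with a loop has identically zero characteristic polynomial, whereas $\chi_{M\setminus Z}$ is computed from the (isomorphic) loopless flat lattice and is in general nonzero (for $M$ a single loop, $\chi_M=0$ but $\chi_{M\setminus Z}=1$). What is actually true, and what the argument needs, is: when both matroids being compared are loopless, their characteristic polynomials are determined by their flat lattices; and when one has a loop, so does the other, so both polynomials vanish. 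The second error is in the bookkeeping of groundsets. Your reduction $P(\mcM)/P(V) = (P(\mcM)/S)\setminus(P(V)-S)$ is correct (and its groundset is $\mcQ_V = \PE - P(V)$, not $\mcQ_V\cup S$), but the difference between this groundset and the groundset $\PP U$ of $(P(\mcM)/S)\setminus\mcQ_U^{*S}\cong P(\mcM/V)$ consists of the points $\langle v+u\rangle$ with $v\in V$, $u\in U$ both nonzero. These are \emph{not} loops of $P(\mcM)/S$ (their rank there is $\rho(\langle u\rangle + V)-\rho(V)$, which equals $1$ whenever $V$ is a flat), and they are not in the span of $S$; they are parallel to the retained point $\langle u\rangle$. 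So the two matroids differ by parallel elements, not by loops, and the justification that their flat lattices agree must go through simplification rather than loop deletion. This can be repaired, but it is exactly the ``point-set bookkeeping'' you flagged as delicate, and your stated justification for it does not hold.

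The clean argument — and the paper's — is the one in your last sentences: split on whether $V\in\F_{\mcM}$. If $V\notin\F_{\mcM}$, then by Proposition \ref{contraction loop} both $\mcM/V$ and $P(\mcM)/P(V)$ contain a loop, so both characteristic polynomials are identically zero by Theorems \ref{char matroid} and \ref{loop char 0}. If $V\in\F_{\mcM}$, Theorem \ref{contraction flat iso} gives $\F_{\mcM/V}\cong\F_{P(\mcM)/P(V)}$ as lattices, both contractions are loopless, and Theorems \ref{char matroid} and \ref{char flats} express both polynomials purely in terms of these isomorphic lattices (with ranks given by heights), so they coincide. Note that this route never needs Theorem \ref{contraction proj} or Theorem \ref{char $q$-matroid/matroid} at all.
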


\begin{proof}
Let $V \leq E$, then $V \in \F_{\mcM} \Leftrightarrow P(V) \in \F_{P(\mcM)}$. If $V \notin \F_{\mcM}$ then by Proposition \ref{contraction loop} $\mcM/V$ and $P(\mcM)/P(V)$ contain loops, therefore $\chi_{\mcM / V}(x) = 0 = \chi_{P(\mcM) / P(V)}(x)$.
If $V \in \F_{\mcM}$, by Theorem \ref{contraction flat iso}, $\F_{\mcM/F} \cong \F_{P(\mcM)/P(V)}$ as lattices and, by Proposition \ref{contraction loop}, both matroids are loopless. Hence, Theorems \ref{char matroid} and \ref{char flats} imply, $\chi_{\mcM /V}(x) = \chi_{P(\mcM) / P(V)}(x)$.
\end{proof}

The close connection between the characteristic polynomial of a $q$-matroid $\mcM$ and that of its projectivization matroid gives a new approach to study the former. In fact, we use this approach to find a recursive formula for the characteristic polynomial of a $q$-matroid in terms of the characteristic polynomial of its minors.
Because we will be using the recursive formula defined in Theorem \ref{char matroid} on the projectivization matroid $P(\mcM)$ and its minors, we need to pay a particular attention on whether $P(\mcM)$ and its minors contain coloops. We thus need the following few lemmas. 

\begin{lemma}\label{coloop prop}
Let $M = (S, r)$ be a matroid and $\mcM = (E, \rho)$ be a $q$-matroid. Then 

\begin{itemize}
    \item[(a)]  $w \in S$ is a coloop of $M \Longleftrightarrow  r^*(w) = 0 \Longleftrightarrow r(S - w) = r(S) - 1$
    
    \item[(b)]  $\langle w \rangle \leq E$ is a coloop of $\mcM \Longleftrightarrow \rho^*(\langle w \rangle) = 0 \Longleftrightarrow \rho(\langle w \rangle^{\perp}) = \rho(E) - 1$

\end{itemize}
\end{lemma}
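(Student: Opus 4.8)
The plan is to prove both (a) and (b) directly from the definitions of the dual rank functions given in Definitions \ref{dual matroid} and \ref{dual $q$-matroid}, together with the defining property of a coloop (an element is a coloop iff it is a loop in the dual). Both statements have the same shape, so I would prove (a) first and then observe that (b) follows by the identical manipulation once the set-complement $S - w$ is replaced by the orthogonal complement $\langle w\rangle^\perp$ and $|A|$ by $\dim$.

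For (a): by definition $w$ is a coloop of $M$ iff $w$ is a loop of $M^*$, i.e.\ iff $r^*(w) = 0$; this is the first equivalence and it is immediate. For the second equivalence, I would simply evaluate the dual rank function at the singleton $A = \{w\}$ using Definition \ref{dual matroid}:
\begin{equation*}
r^*(w) = |\{w\}| - r(S) + r(S - w) = 1 - r(S) + r(S - w).
\end{equation*}
Hence $r^*(w) = 0$ iff $r(S - w) = r(S) - 1$. One small sanity remark worth including: by monotonicity $r(S-w) \le r(S)$, and by submodularity (or directly) $r(S) \le r(S-w) + r(w) \le r(S-w)+1$, so $r(S-w)$ is always either $r(S)$ or $r(S)-1$; thus "$r(S-w) = r(S)-1$" is genuinely the condition that adjoining $w$ raises the rank, which is the intuitive meaning of $w$ not being contained in the closure of the rest — i.e.\ a coloop.

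For (b): $\langle w\rangle$ is a coloop of $\mcM$ iff it is a loop of $\mcM^*$, i.e.\ iff $\rho^*(\langle w\rangle) = 0$, which is the first equivalence by definition of coloop for $q$-matroids. For the second, evaluate Definition \ref{dual $q$-matroid} at $V = \langle w\rangle$:
\begin{equation*}
\rho^*(\langle w\rangle) = \dim(\langle w\rangle) - \rho(E) + \rho(\langle w\rangle^\perp) = 1 - \rho(E) + \rho(\langle w\rangle^\perp),
\end{equation*}
so $\rho^*(\langle w\rangle) = 0$ iff $\rho(\langle w\rangle^\perp) = \rho(E) - 1$. As in the matroid case, monotonicity and submodularity of $\rho$ force $\rho(\langle w\rangle^\perp)$ to be either $\rho(E)$ or $\rho(E)-1$, so the stated condition is exactly the non-degeneracy-of-extension condition. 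I do not expect any real obstacle here: the statement is a one-line unwinding of the two duality definitions evaluated on a one-element (resp.\ one-dimensional) object, and the only thing to be slightly careful about is noting that the definition of coloop used in the excerpt ("$v$ is a coloop if it is a loop in the dual") is precisely the first equivalence, so that the content of the lemma is entirely the second equivalence in each case.
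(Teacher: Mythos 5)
Your proof is correct and takes essentially the same route as the paper, which simply cites Oxley for (a) and notes that (b) follows from Definition \ref{dual $q$-matroid} --- precisely the one-line unwinding of the dual rank function at a singleton (resp.\ one-dimensional subspace) that you carry out explicitly. The only caveat concerns your optional sanity remark in (b): to get $\rho(\langle w\rangle^{\perp})\geq\rho(E)-1$ one should adjoin some $v\notin\langle w\rangle^{\perp}$ rather than $w$ itself, since $w$ may be isotropic for the chosen NSBF; this does not affect the actual equivalences, which stand as written.
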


\begin{proof}
Statement (a) can be found in \cite[Section 1.6, Exercise 6]{oxley}. Statement (b) follows from Definition \ref{dual $q$-matroid}.
\end{proof}

 Recall the notation $\mcQ_V = \{\langle w \rangle \, : \, \langle w \rangle \not \leq V\}$, and  $\mcQ_V^{*e} = \mcQ_V - \{\langle e \rangle\}$ for  $\langle e \rangle \in \mcQ_V$ introduced in Notation \ref{notation}.
To make the results and proofs easier to read, we may omit the brackets to denote 1-dimensional spaces and we let $v^{\perp} := \langle v \rangle^{\perp}$ for  $v \in E$.

\begin{lemma}\label{coloop}
Let $\mcM = (E, \rho)$ be a $q$-matroid, $P(\mcM) = (\PE, r)$ its projectivization matroid and $e, v \in \PE$ such that $\langle e \rangle \oplus \langle  v \rangle^{\perp} = E$. Let $A \subsetneq \mcQ_{v^{\perp}}^{*e}$, then:
\begin{itemize}
    \item[(a)] for all  $w \in \mcQ_{v^{\perp}}^{*e} - A$, the element $w$ is not a coloop of the matroid $P(\mcM)\setminus A$. 
    
    \item[(b)] for all $w \in \mcQ_{v^{\perp}}^{*e}$ and $z \in \mcQ_{v^{\perp}} - (A \cup w)$, the element $z$ is not a coloop of $P(\mcM) \setminus A /w$.
    
    \item[(c)] for all $w_1, w_2 \in \mcQ_{v^{\perp}} - A$, the matroid $P(\mcM) \setminus A/\{w_1, w_2\}$ contains a loop.
    
    \item[(d)] $e$ is a coloop of the matroid $P(\mcM) \setminus \mcQ_{v^{\perp}}^{*e}$ if and only if $\langle v \rangle$ is a coloop of $\mcM$.
\end{itemize}
\end{lemma}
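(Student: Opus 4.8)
The statement to prove is Lemma~\ref{coloop}, whose four parts all concern non-coloopness (or existence of loops) in various minors of the projectivization matroid $P(\mcM)$ built from a hyperplane complement. The unifying observation is that all of these minors have groundset sitting inside $\mcQ_{v^{\perp}} = \PE - \PP(v^{\perp})$, whose points span all of $E$; and deleting or contracting the ``extra'' points $\mcQ_{v^{\perp}}^{*e}$ one at a time never decreases the closure of the remaining set. I would structure the argument around the criterion of Lemma~\ref{coloop prop}(a): a point $w$ of a matroid $M' = (S', r')$ is a coloop iff $r'(S' - w) = r'(S') - 1$, i.e.\ iff removing $w$ drops the rank. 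So for (a), (b), (d) I must compute $r_{P(\mcM)}$ of the relevant spanning-type sets via Theorem~\ref{$q$-matroid/matroid} ($r(S) = \rho(\langle S\rangle)$) and Proposition~\ref{spanning set project} (a set containing a basis has full rank), and for (c) I must exhibit a loop, i.e.\ a point whose span is already in the span of the contracted set.

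For part~(a): the groundset of $P(\mcM)\setminus A$ is $\PE - A$, and for $w \in \mcQ_{v^{\perp}}^{*e} - A$ I want $r_{P(\mcM)\setminus A}((\PE - A) - w) = r_{P(\mcM)\setminus A}(\PE - A)$. Since $w \in \mcQ_{v^{\perp}}^{*e}$ means $w \neq e$ and $w \not\le v^{\perp}$, the set $(\PE - A) - w$ still contains $\langle e\rangle$ and all of $\PP(v^{\perp})$, hence contains $\langle e\rangle \oplus v^{\perp} = E$ — wait, more carefully: $\PP(v^{\perp})$ has $\dim v^{\perp} = \dim E - 1$ dimensions' worth of points, together with $e \notin v^{\perp}$ this spans $E$. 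So $\langle (\PE - A) - w\rangle = E = \langle \PE - A\rangle$, giving equal rank $\rho(E)$; hence $w$ is not a coloop. The key point is simply that one deleted point from outside $v^{\perp}$ cannot destroy a basis coming from $\langle e\rangle \oplus v^{\perp}$.

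For part~(b): in $P(\mcM)\setminus A/w$ the groundset is $\PE - A - w$, and by Proposition~\ref{del cont commute} (or directly from the contraction rank formula) $r_{P(\mcM)\setminus A/w}(B) = r_{P(\mcM)}(B \cup \{w\}) - r_{P(\mcM)}(\{w\})$. For $z \in \mcQ_{v^{\perp}} - (A\cup w)$ with $z \neq w$, I again check that $(\PE - A - w - z)\cup\{w\}$ still contains a basis of $E$: it contains all of $\PP(v^{\perp})$ together with $e$ (since $e \notin A$, $e \neq w$ unless $w = e$, but if $w = e$ then $w$ is in the contracted set and still contributes $\langle e\rangle$; if $w \neq e$ then $e$ is still in the groundset and not removed), so it spans $E$; likewise $(\PE - A - w)\cup\{w\}$ spans $E$. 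Equal rank, so $z$ not a coloop. For part~(c): after contracting two distinct points $w_1, w_2 \in \mcQ_{v^{\perp}} - A$, I claim the remaining groundset contains a loop. The span $\langle\{w_1,w_2\}\rangle$ is a $2$-dimensional space $W$ meeting $v^{\perp}$ in at least a line $\ell$ (since $\dim W + \dim v^{\perp} = 2 + (\dim E - 1) > \dim E$); any point $z \in \PP(\ell) \subseteq \PP(v^{\perp})$ lies in the groundset $\PE - A - \{w_1,w_2\}$ (it is in $v^{\perp}$, hence not in $\mcQ_{v^{\perp}}$, hence not in $A$, and it is $\neq w_1, w_2$) and satisfies $\langle z\rangle \le W = \langle\{w_1,w_2\}\rangle$, so $r_{P(\mcM)/\{w_1,w_2\}}(z) = \rho(W) - \rho(W) = 0$ (using $r(z \cup \{w_1,w_2\}) = \rho(\langle\{w_1,w_2\}\rangle) = r(\{w_1,w_2\})$). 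Hence $z$ is a loop. Finally part~(d): here $P(\mcM)\setminus\mcQ_{v^{\perp}}^{*e}$ has groundset $\PE - \mcQ_{v^{\perp}}^{*e} = \PP(v^{\perp}) \cup \{e\}$, with span $v^{\perp} \oplus \langle e\rangle = E$. By Lemma~\ref{coloop prop}(a), $e$ is a coloop iff $r_{P(\mcM)\setminus\mcQ_{v^{\perp}}^{*e}}(\PP(v^{\perp})) = r_{P(\mcM)\setminus\mcQ_{v^{\perp}}^{*e}}(\PP(v^{\perp})\cup\{e\}) - 1$, i.e.\ $\rho(v^{\perp}) = \rho(E) - 1$, which by Lemma~\ref{coloop prop}(b) is exactly the condition that $\langle v\rangle$ is a coloop of $\mcM$ (note $v^{\perp} = \langle v\rangle^{\perp}$ since the NSBF identifies $\langle v\rangle$ with the hyperplane $v^{\perp}$ appropriately — using $\langle e\rangle \oplus \langle v\rangle^\perp = E$).

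The main obstacle, and the thing to be careful about, is bookkeeping: keeping straight which points lie in which groundset after the deletions and contractions (using Proposition~\ref{del cont commute} freely), correctly translating ``coloop of a minor'' via the rank formulas of Definition of deletion/contraction and Lemma~\ref{coloop prop}(a), and making sure in each case that the set in question genuinely still contains a spanning set of $E$ — the one genuinely nontrivial dimension count is in part~(c), where the intersection $\langle\{w_1,w_2\}\rangle \cap v^{\perp}$ is forced to be at least a line purely by dimensions. I would write (a) in full, then note (b) is the same argument applied after one contraction, handle (c) via the dimension count above, and close with the short computation for (d).
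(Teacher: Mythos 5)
Your overall strategy is the same as the paper's: reduce each coloop claim to a rank computation via Lemma \ref{coloop prop}(a), use Proposition \ref{spanning set project} to see that the relevant sets have full rank because they contain a spanning set built from $\PP(v^{\perp})$ plus one point outside the hyperplane, and for (c) exhibit a loop by the dimension count $\dim(\langle w_1,w_2\rangle \cap v^{\perp}) \geq 1$. Parts (a), (c) and (d) are correct and essentially identical to the paper's proof.

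There is one slip in part (b). To show $z$ is not a coloop of $P(\mcM)\setminus A / w$ you must show that $(\PE - A - w - z)\cup\{w\} = \PE - A - z$ still spans $E$, and you justify this by claiming the set contains $\PP(v^{\perp})$ \emph{together with $e$}. But $z$ ranges over $\mcQ_{v^{\perp}} - (A\cup w)$, and since $e \in \mcQ_{v^{\perp}}$, $e \notin A$ and $e \neq w$ (recall $w \in \mcQ_{v^{\perp}}^{*e}$), the case $z = e$ is allowed — and it genuinely occurs in the later application of this lemma — in which case $\PE - A - z$ does \emph{not} contain $e$. Your parenthetical discusses whether $w = e$, which cannot happen, rather than whether $z = e$, which can. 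The fix is immediate and is what the paper implicitly does: the set $\PE - A - z$ always contains $w$, and since $w \in \mcQ_{v^{\perp}}^{*e}$ means $\langle w\rangle \not\leq v^{\perp}$, the set $\PP(v^{\perp}) \cup \{w\}$ already contains a basis of $E$ (the paper takes the basis $\{h_1,\dots,h_{n-1},w\}$ with the $h_i$ a basis of $v^{\perp}$). With that substitution your argument for (b) goes through; everything else is sound.
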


\begin{proof}
Throughout, let $\mcQ := \mcQ_{v^{\perp}}^{*e}$, $H:= v^{\perp}$ and  $\{h_1, \cdots h_{n-1} \}$ be a basis of $H$. Furthermore let $A \subsetneq \mcQ$, $w \in \mcQ - A$ and consider the matroid $N := P(\mcM) \setminus A = (\PE - A, r_N) $.  Note that $r(S) = r_N(S)$ for all $S \subseteq \PE - A$.

\vspace{.1cm}
Statement (a). Since $e \notin H$ and $\dim H = \dim E  -1 $, the set $B:= \{h_1, \cdots, h_{n-1} , e\}$ is a basis of $E$, and $B \subseteq \PE - \mcQ \subseteq  \PE - (A \cup w) \subseteq \PE - A$. 
Hence by Proposition \ref{spanning set project}, $r(\PE - \mcQ) = r(\PE - (A \cup w) ) = r(\PE - A) = r(\PE)$. Because $r_N(S) = r(S)$ for all $S \subseteq \PE - A$, we have $r_N(\PE - (A \cup w) ) = r_N(\PE - A)$ so, by Lemma \ref{coloop prop}, $w$ is not a coloop of $N$, proving statement (a).

\vspace{.1cm}

Statement (b). Let $B:= \{h_1, \cdots , h_{n-1} , w\}$ which is a basis of $E$. We need only to show $r_{N/w}^*(z) \neq 0$ for all $z \in \mcQ_{v^{\perp}} - (A \cup w)$.
\begin{align*}
    r_{N/w}^*(z) &= |z| + r_{N/w}(\PE - (A\cup w \cup z)) - r_{N/w}(\PE - (A\cup w)) \\
                  &= 1 + r_{N}((\PE - (A\cup w \cup z)) \cup w) - r_{N}(w) - r_{N}((\PE -  (A \cup w)) \cup w) + r_{N}(w)\\
                  &= 1+ r(\PE - (A \cup z)) - r(\PE -  A) \\
                  &= 1 + \rho(E) - \rho(E) = 1
\end{align*}
where the last equality follows from Proposition \ref{spanning set project} because $B$ is a subset $\PE - ( A \cup z)$ and $\PE - A$. 

\vspace{.1cm}

Statement (c). Let $w_1, w_2 \in \mcQ_{v^{\perp}}-A$ and $W = \langle w_1, w_2 \rangle$. Clearly $\dim W = 2$ and  $\dim (W \cap \langle v \rangle^{\perp}) = 1$. Hence there exists $z \in W$ such that $\langle z \rangle \notin \mcQ_{v^{\perp}}$. We show $z$ is a loop of $N / \{w_1, w_2\}$, i.e. $r_{N/\{w_1, w_2\}}(z) = 0$.  
\begin{align*}
    r_{N/\{w_1, w_2\}}(z) &= r(z \cup \{w_1,w_2\}) - r(\{w_1 , w_2\}) \\
                &= \rho(\langle z, w_1,w_2 \rangle) - \rho(\langle w_1, w_2 \rangle) \\
                &= \rho(W) - \rho(W) = 0.
\end{align*}

\vspace{.1cm}

Statement (d). Let $N' := P(\mcM) \setminus \mcQ = (\PE - \mcQ, r_{N'})$. By Lemma \ref{coloop prop}  $\langle v \rangle$ is a coloop of $\mcM$ if and only if $\rho(H) = \rho(E) -1$.
Moreover, $\rho(H) = r(\PP H) = r_{N'}(\PP H)$ and $\rho(E) = r(\PE) = r(\PE - \mcQ) = r_{N'}(\PE - \mcQ)$. Hence  $\langle v \rangle$ is a coloop of $\mcM$ if and only if  $r_{N'}(\PP H) = r_{N'}(\PP H \cup e) -1$ if and only if $e$ is a coloop of $N'$.
\end{proof}

With those results in place, we are now ready to consider the first step of our main theorem. For the next results we use the following notation. Given $\mcQ_{v^{\perp}}^{*e}$, fix an ordering of its elements. Define $\mcS_0 := \emptyset$ and $\mcS_i := \{w_1, \cdots , w_i\}$ where $w_j$ is the $j^{\textup{th}}$ element of $\mcQ_{v^{\perp}}^{*e}$. Note furthermore that $|\mcS_i| = i$ and $\mcS_{q^{n-1}-1} = \mcQ_{v^{\perp}}^{*e}$. Moreover, in the proofs of the remaining results in this section, Proposition \ref{del cont commute} and Theorem \ref{char matroid} may be used without mention.

\begin{proposition}\label{intermediate form}
Let $\mcM = (E, \rho)$ be a $q$-matroid, $P(\mcM) = (\PE, r)$ its projectivization matroid, $e,v \in \PE$ such that $\langle e \rangle \oplus \langle v\rangle^{\perp} = E$.
Then 

$$\chi_{\mcM}(x) = \begin{cases} \chi_{\mcM \setminus v}(x) \cdot \chi_{\mcM / e}(x) - \sum_{i=0}^{q^{n-1}-2} \chi_{P(\mcM) \setminus S_i / w_{i+1}} & \textup{ if } v \textup{ is a coloop of } \mcM \\
 \chi_{\mcM \setminus v}(x) - \chi_{\mcM / e}(x) - \sum_{i=0}^{q^{n-1}-2} \chi_{P(\mcM) \setminus S_i / w_{i+1}} & \textup{otherwise}
\end{cases}$$
\end{proposition}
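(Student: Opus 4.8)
The plan is to compute $\chi_{P(\mcM)}(x)$ by peeling off, one at a time, the elements of $\mcQ_{v^\perp}^{*e} = \PE - \PP(v^\perp) - \{e\}$ using the deletion–contraction recursion of Theorem~\ref{char matroid}, and then to identify the terms that survive with characteristic polynomials of minors of $\mcM$ via Theorem~\ref{contraction proj} and Theorem~\ref{char $q$-matroid/matroid}. Since $\chi_{\mcM}(x) = \chi_{P(\mcM)}(x)$ by Theorem~\ref{char $q$-matroid/matroid}, it suffices to work entirely on the matroid side.

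\textbf{Step 1: the peeling recursion.} Order the elements of $\mcQ_{v^\perp}^{*e}$ as $w_1, \dots, w_{q^{n-1}-1}$ and set $\mcS_0 = \emptyset$, $\mcS_i = \{w_1,\dots,w_i\}$. I will show by induction on $i$ that
$$\chi_{P(\mcM)}(x) = \chi_{P(\mcM)\setminus \mcS_i}(x) \;-\; \sum_{j=0}^{i-1}\chi_{P(\mcM)\setminus \mcS_j / w_{j+1}}(x).$$
The base case $i=0$ is trivial. For the inductive step, apply Theorem~\ref{char matroid} to the element $w_{i+1}$ in the matroid $P(\mcM)\setminus \mcS_i$: by Lemma~\ref{coloop} (a), $w_{i+1}$ is \emph{not} a coloop of $P(\mcM)\setminus \mcS_i$ (taking $A = \mcS_i \subsetneq \mcQ_{v^\perp}^{*e}$), so
$$\chi_{P(\mcM)\setminus \mcS_i}(x) = \chi_{P(\mcM)\setminus \mcS_i \setminus w_{i+1}}(x) - \chi_{P(\mcM)\setminus \mcS_i / w_{i+1}}(x) = \chi_{P(\mcM)\setminus \mcS_{i+1}}(x) - \chi_{P(\mcM)\setminus \mcS_i / w_{i+1}}(x),$$
and substituting into the inductive hypothesis finishes the step. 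Taking $i = q^{n-1}-1$ gives $\mcS_i = \mcQ_{v^\perp}^{*e}$ and
$$\chi_{P(\mcM)}(x) = \chi_{P(\mcM)\setminus \mcQ_{v^\perp}^{*e}}(x) \;-\; \sum_{j=0}^{q^{n-1}-2}\chi_{P(\mcM)\setminus \mcS_j / w_{j+1}}(x).$$

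\textbf{Step 2: identify the leftover term.} It remains to show that $\chi_{P(\mcM)\setminus \mcQ_{v^\perp}^{*e}}(x)$ equals $\chi_{\mcM\setminus v}(x)\cdot\chi_{\mcM/e}(x)$ when $\langle v\rangle$ is a coloop of $\mcM$, and $\chi_{\mcM\setminus v}(x) - \chi_{\mcM/e}(x)$ otherwise. Since $P(\mcM)\setminus \mcQ_{v^\perp}^{*e} = \bigl(P(\mcM)\setminus \mcQ_{v^\perp}\bigr)$ with the single element $e$ adjoined back, i.e. $P(\mcM)\setminus \mcQ_{v^\perp}^{*e} \setminus e = P(\mcM)\setminus \mcQ_{v^\perp}$ and $P(\mcM)\setminus \mcQ_{v^\perp}^{*e} / e = P(\mcM)\setminus \mcQ_{v^\perp}^{*e}/e$, I apply Theorem~\ref{char matroid} to the element $e$ of the matroid $P(\mcM)\setminus \mcQ_{v^\perp}^{*e}$. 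By Lemma~\ref{coloop} (d), $e$ is a coloop of $P(\mcM)\setminus\mcQ_{v^\perp}^{*e}$ if and only if $\langle v\rangle$ is a coloop of $\mcM$, which explains the case split. In both cases the recursion produces $\chi_{P(\mcM)\setminus\mcQ_{v^\perp}^{*e}\setminus e}(x)$ and $\chi_{P(\mcM)\setminus\mcQ_{v^\perp}^{*e}/e}(x)$. Now apply Theorem~\ref{contraction proj}: with $W = \langle e\rangle$ and (a complement) $V = \langle v\rangle^\perp$, so that $V^\perp = \langle v\rangle$ and the basis $S$ of $W$ is $\{e\}$, the theorem gives $P(\mcM/\langle e\rangle) \cong P(\mcM)/e \setminus \mcQ_{\langle v\rangle^\perp}^{*e}$, hence $P(\mcM)\setminus\mcQ_{v^\perp}^{*e}/e \cong P(\mcM/e)$ (using Proposition~\ref{del cont commute} to commute the deletion past the contraction); similarly the second part of Theorem~\ref{contraction proj}, applied with $V = \langle v\rangle$ so $V^\perp = v^\perp$, gives $P(\mcM\setminus v) \cong P(\mcM)\setminus\mcQ_{v^\perp} = P(\mcM)\setminus\mcQ_{v^\perp}^{*e}\setminus e$. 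Finally, Theorem~\ref{char $q$-matroid/matroid} converts both of these back to $\chi_{\mcM/e}(x)$ and $\chi_{\mcM\setminus v}(x)$, and combining with Step~1 yields exactly the claimed formula.

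\textbf{Main obstacle.} The bookkeeping in Step~2 — keeping track of which complement to feed into Theorem~\ref{contraction proj} in each of the two invocations, checking that $\langle e\rangle \oplus \langle v\rangle^\perp = E$ indeed makes $\langle v\rangle^\perp$ a valid complement of $\langle e\rangle$ and that the relevant $S$ is a singleton, and correctly commuting the deletions and contractions via Proposition~\ref{del cont commute} — is the delicate part; the peeling induction in Step~1 is routine once Lemma~\ref{coloop}(a) is in hand, and the identification of minors is a direct citation of earlier results. One should also double-check the edge case $n=1$ (where $\mcQ_{v^\perp}^{*e}$ is empty and the sum vanishes), though this should follow without incident.
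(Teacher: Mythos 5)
Your proposal follows essentially the same route as the paper's proof: the same peeling induction over $\mcQ_{v^{\perp}}^{*e}$ justified by Lemma \ref{coloop} (a), the same final deletion--contraction step at $e$ with the case split supplied by Lemma \ref{coloop} (d), and the same identification of the leftover minors via Theorems \ref{contraction proj} and \ref{char $q$-matroid/matroid}. The only quibble is that in your second invocation of Theorem \ref{contraction proj} the correct choice is $V=\langle v\rangle^{\perp}$ (so that $V^{\perp}=\langle v\rangle$ and $\mcQ_{V}=\mcQ_{v^{\perp}}$), not $V=\langle v\rangle$; the conclusion $P(\mcM\setminus v)\cong P(\mcM)\setminus\mcQ_{v^{\perp}}$ that you draw is nevertheless the correct one.
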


\begin{proof}
We first use an induction argument on $k := |\mcS_k|$ to show that for all $1 \leq k \leq q^{n-1} -1$,
\begin{equation}\label{int form induc}
    \chi_{P(\mcM)}(x) = \chi_{P(\mcM)\setminus \mcS_{k}}(x) - \sum_{i=0}^{k-1} \chi_{P(\mcM)\setminus \mcS_i / w_{i+1}}(x).
\end{equation}

We prove the base case when $k=1$. By Lemma \ref{coloop} (a), $w_1$ is not a coloop of $P(\mcM)$  hence, by Theorem \ref{char matroid}, $\chi_{P(\mcM)}(x) = \chi_{P(\mcM) \setminus \mcS_1}(x)  - \chi_{P(\mcM) \setminus \mcS_0 / w_1}(x)$, where recall $\mcS_0 = \emptyset$ and $\mcS_1 = \{w_1\}$.

Now assume (\ref{int form induc}) holds for $ k = q^{n-1} - 2$. Then 
\begin{align*}
    \chi_{P(\mcM)}(x) &= \chi_{P(\mcM) \setminus \mcS_k}(x) - \sum_{i=0}^{k-1} \chi_{P(\mcM)\setminus \mcS_i / w_{i+1}}(x)\\
                    &= \chi_{P(\mcM) \setminus (\mcS_k \cup w_{k+1})}(x) - \chi_{P(\mcM) \setminus \mcS_k / w_{k+1}}(x) - \sum_{i=0}^{k-1} \chi_{P(\mcM)\setminus \mcS_i / w_{i+1}}(x) \\
                    &= \chi_{P(\mcM) \setminus \mcS_{k+1}}(x) - \sum_{i=0}^{k} \chi_{P(\mcM)\setminus \mcS_i / w_{i+1}}(x).
\end{align*}
The second equality holds true by Theorem \ref{char matroid}, because $w_{k+1}$ is not a coloop of $P(\mcM)\setminus \mcS_{k}$ by \ref{coloop} (a). This establishes (\ref{int form induc}). 

Because $\mcS_{q^{n-1} -1} = \mcQ_{v^{\perp}}^{*e}$, we conclude the proof by using Theorem \ref{char matroid} on $\chi_{P(\mcM) \setminus \mcS_{q^{n-1} -1}}(x)$ with the element $e \in \mcQ_{v^{\perp}} - \mcS_{q^{n-1} -1}$.
We therefore consider two cases: when $e$ is a coloop of $P(\mcM)\setminus \mcQ_{v^{\perp}}^{*e}$ or not. By Lemma \ref{coloop} (d), those two cases correspond exactly to when $\langle v \rangle$ is a coloop of $\mcM$ or not.
First assume $e$ is a coloop of $P(\mcM)\setminus \mcQ_{v^{\perp}}^{*e}$, and therefore $\langle v \rangle$ is a coloop of $\mcM$.
Then by Theorem \ref{char matroid}
$$\chi_{P(\mcM) \setminus \mcQ_{v^{\perp}}^{*e}}(x) = \chi_{P(\mcM) \setminus \mcQ_{v^{\perp}}}(x) \chi_{P(\mcM) \setminus \mcQ_{v^{\perp}}^{*e}/e}(x).$$ 
By Theorem \ref{contraction proj}, $P(\mcM) \setminus \mcQ_{v^{\perp}} = P(\mcM \setminus v)$ and $P(\mcM) \setminus \mcQ_{v^{\perp}}^{*e}/e \cong P(\mcM / e)$ hence their respective characteristic polynomials are equal. Furthermore by Theorem \ref{char $q$-matroid/matroid}, $\chi_{P(\mcM \setminus v)}(x) = \chi_{\mcM \setminus v}(x)$ and $\chi_{P(\mcM/e)}(x) = \chi_{\mcM/e}(x)$. Therefore,
$\chi_{P(\mcM) \setminus \mcS_{q^{n-1}-1}}(x) = \chi_{P(\mcM) \setminus \mcQ_{v^{\perp}}}(x) \cdot \chi_{P(\mcM) \setminus \mcQ_{v^{\perp}}^{*e}/e}(x)  = \chi_{\mcM \setminus v}(x) \cdot \chi_{\mcM / e}(x)$, which, when substituted in (\ref{int form induc}) gives us the wanted equality.

If $e$ is a not coloop of $P(\mcM) \setminus \mcQ_{v^{\perp}}^{*e}$, and therefore $\langle v \rangle $ is not a coloop of $\mcM$. Then $$\chi_{P(\mcM) \setminus \mcQ_{v^{\perp}}^{*e}}(x) = \chi_{P(\mcM) \setminus \mcQ_{v^{\perp}}}(x) - \chi_{P(\mcM) \setminus \mcQ_{v^{\perp}}^{*e}/e}(x).$$
Once again using Theorems \ref{contraction proj} and \ref{char $q$-matroid/matroid}, the wanted equality follows. 
\end{proof}

At this point, note that the characteristic polynomial $\chi_{\mcM}(x)$ depends on both the characteristic polynomial of minors of the $q$-matroid $\mcM$ and the characteristic polynomial of minors of $P(\mcM)$. In the following Theorem, we rewrite all characteristic polynomials of minors of $P(\mcM)$ in terms of characteristic polynomials of minors of $\mcM$.

\begin{theorem}
Let $\mcM = (E, \rho)$ be a $q$-matroid and $e,v \in E$ such that $\langle e \rangle \oplus \langle v\rangle ^{\perp} = E$.
Then 

$$\chi_{\mcM}(x) = \begin{cases}
\chi_{\mcM \setminus v}(x) \cdot \chi_{\mcM/e}(x) - \sum_{w \in \mcQ_{v^{\perp}}^{*e}} \chi_{\mcM/w} (x)& \textup{ if } v \textup{ is a coloop of } \mcM, \\
\chi_{\mcM \setminus v}(x) - \sum_{w \in \mcQ_{v^{\perp}}} \chi_{\mcM/w} (x) & \textup{otherwise.}
\end{cases}$$
\end{theorem}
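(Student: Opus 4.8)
The plan is to start from Proposition \ref{intermediate form}, which already expresses $\chi_{\mcM}(x)$ in terms of $\chi_{\mcM \setminus v}(x)$, $\chi_{\mcM/e}(x)$, and the sum $\sum_{i=0}^{q^{n-1}-2} \chi_{P(\mcM)\setminus \mcS_i / w_{i+1}}(x)$. The whole task is to rewrite each summand $\chi_{P(\mcM)\setminus \mcS_i / w_{i+1}}(x)$ as $\chi_{\mcM/w_{i+1}}(x)$, possibly with some vanishing contributions accounting for the difference between the two index sets ($\mcQ_{v^{\perp}}^{*e}$ versus $\mcQ_{v^{\perp}}$) and the presence or absence of the $\chi_{\mcM/e}$ term. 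First I would fix $i$ and $w := w_{i+1} \in \mcQ_{v^{\perp}}^{*e}$, and analyze the matroid $P(\mcM)\setminus \mcS_i / w$. I would want to apply Theorem \ref{char matroid} recursively to strip off the remaining deleted/contracted points, i.e.\ continue deleting the points of $\mcQ_{v^{\perp}}^{*e} - \mcS_i$ one at a time and then handle $e$; the point is that $P(\mcM)/\{w\}$ together with all points outside $\mcQ_{v^{\perp}}$ should reconstruct $P(\mcM/w)$ via Theorem \ref{contraction proj} (with $W = \langle w\rangle$ and a suitable complement $V$).

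The key technical inputs are Lemma \ref{coloop}(b) and (c). Statement (c) says that once we have contracted $w$ (one point of $\mcQ_{v^{\perp}}$) and then contract a second point $w'$ of $\mcQ_{v^{\perp}}$, the resulting matroid has a loop, hence by Theorem \ref{char matroid} its characteristic polynomial is $0$. This means that in the recursive expansion of $\chi_{P(\mcM)\setminus \mcS_i / w}(x)$, every term that arises from further contracting a point of $\mcQ_{v^{\perp}}$ vanishes. So the recursion collapses: expanding $\chi_{P(\mcM)\setminus \mcS_i/w}(x)$ by peeling off the points of $(\mcQ_{v^{\perp}}^{*e} - \mcS_i)$ and then $e$, only the "deletion" branch survives at each step (statement (b) guarantees none of these points is a coloop, so Theorem \ref{char matroid} gives $\chi_{\text{del}} - \chi_{\text{contract}}$ and the contraction branch dies by (c)). Hence $\chi_{P(\mcM)\setminus \mcS_i / w}(x) = \chi_{P(\mcM)\setminus \mcQ_{v^{\perp}} / w}(x)$, and I would identify $P(\mcM)\setminus \mcQ_{v^{\perp}}/w$ with $P(\mcM/w)$ (choosing $V$ with $\langle w \rangle \oplus V^{\perp} = E$ appropriately, or more simply combining $P(\mcM)\setminus\mcQ_{v^\perp} = P(\mcM\setminus v)$ with the contraction formula, taking care that contracting $w$ in $P(\mcM\setminus v)$ corresponds to contracting $\langle w\rangle$ in $\mcM$ since $\langle w\rangle \le v^\perp$). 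Then Theorem \ref{char $q$-matroid/matroid} gives $\chi_{P(\mcM/w)}(x) = \chi_{\mcM/w}(x)$.

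Putting this together, $\sum_{i=0}^{q^{n-1}-2}\chi_{P(\mcM)\setminus\mcS_i/w_{i+1}}(x) = \sum_{w \in \mcQ_{v^{\perp}}^{*e}} \chi_{\mcM/w}(x)$, since the $\mcS_i$ enumerate all of $\mcQ_{v^{\perp}}^{*e}$ as $i$ ranges and $w_{i+1}$ is exactly the $(i+1)$-st element. In the coloop case this directly yields the first branch of the claimed formula. In the non-coloop case Proposition \ref{intermediate form} gives $\chi_{\mcM\setminus v}(x) - \chi_{\mcM/e}(x) - \sum_{w\in\mcQ_{v^\perp}^{*e}}\chi_{\mcM/w}(x)$; since $\langle e\rangle \in \mcQ_{v^\perp}$ (because $e \notin v^\perp$, as $\langle e\rangle \oplus v^\perp = E$) and $\mcQ_{v^\perp} = \mcQ_{v^\perp}^{*e} \sqcup \{\langle e\rangle\}$, the term $-\chi_{\mcM/e}(x)$ merges with the sum to give $-\sum_{w\in\mcQ_{v^\perp}}\chi_{\mcM/w}(x)$, which is the second branch.

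The main obstacle I anticipate is the bookkeeping in the recursive collapse: one must be careful that the chain of deletions/contractions used in Theorem \ref{char matroid} is legitimate (points remain in the groundset, Proposition \ref{del cont commute} lets one reorder), that Lemma \ref{coloop}(b) applies at every intermediate stage (it is stated for $P(\mcM)\setminus A/w$ with $A \subsetneq \mcQ_{v^\perp}^{*e}$, so one needs the intermediate sets of deleted points to be of this form), and that the identification $P(\mcM)\setminus\mcQ_{v^\perp}/w \cong P(\mcM/w)$ via Theorem \ref{contraction proj} is set up with a correct choice of complement. A secondary subtlety is confirming $\langle e\rangle \notin v^\perp$ and that the union $\mcQ_{v^\perp} = \mcQ_{v^\perp}^{*e}\cup\{\langle e\rangle\}$ is disjoint, which is immediate from $\langle e\rangle\oplus v^\perp = E$ implying $\langle e \rangle \not\le v^\perp$, i.e.\ $\langle e\rangle \in \mcQ_{v^\perp}$.
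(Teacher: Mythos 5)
Your proposal is correct and follows essentially the same route as the paper: starting from Proposition \ref{intermediate form}, collapsing each term $\chi_{P(\mcM)\setminus \mcS_i / w_{i+1}}(x)$ to $\chi_{P(\mcM)\setminus \mcQ_{v^{\perp}}^{*w_{i+1}} / w_{i+1}}(x)$ via Lemma \ref{coloop}(b),(c) (non-coloop deletion steps whose contraction branches vanish by the loop in part (c)), then identifying this with $\chi_{\mcM/w_{i+1}}(x)$ via Theorems \ref{contraction proj} and \ref{char $q$-matroid/matroid}, and finally absorbing $-\chi_{\mcM/e}(x)$ into the sum over $\mcQ_{v^{\perp}}$ in the non-coloop case. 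The paper organizes the collapse as an induction on the size of the additionally deleted set $A$, which is exactly the bookkeeping you anticipated needing.
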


\begin{proof}
Given the equation of Proposition \ref{intermediate form}, we show that

\begin{equation}\label{final form}
\sum_{i=0}^{q^{n-1}-2} \chi_{P(\mcM) \setminus \mcS_i / w_{i+1}}(x) = \sum_{w \in \mcQ_{v^{\perp}}^{*e}} \chi_{\mcM/ w } (x).
\end{equation}

Fix  $0 \leq i \leq q^{n-1}-2$ and consider $\chi_{P(\mcM)\setminus S_i /w_{i+1}}(x)$.  For all $A \subseteq \mcQ_{v^{\perp}} - (S_i \cup w_{i+1})$, we show by induction on $|A|$ that 
\begin{equation}\label{induc final form}
\chi_{P(\mcM)\setminus S_i /w_{i+1}}(x) = \chi_{P(\mcM)\setminus (S_i \cup A) /w_{i+1}}(x).
\end{equation} 
First let $|A| = 1$ and let $w \in A$.
By Lemma \ref{coloop} (b), $w$ is not a coloop of $P(\mcM)\setminus \mcS_i/w_{i+1}$. Therefore
$$\chi_{P(\mcM)\setminus S_i /w_{i+1}}(x) =  \chi_{P(\mcM)\setminus (S_i \cup w) /w_{i+1}}(x) - \chi_{P(\mcM)\setminus S_i /\{w_{i+1} , w\}}(x).$$
By Lemma \ref{coloop} (c), $P(\mcM) \setminus S_i \, / \{w_{i+1},w\}$ contains a loop which implies its characteristic polynomial is $0$. Hence $\chi_{P(\mcM)\setminus S_i /w_{i+1}}(x) =  \chi_{P(\mcM)\setminus (S_i \cup w) /w_{i+1}}(x)$. 
Now let $|A| = k$ and let $w \in A$. Since $|A - w| = k-1$ by induction hypothesis we get $\chi_{P(\mcM)\setminus S_i /w_{i+1}}(x) = \chi_{P(\mcM)\setminus (S_i \cup (A-w)) /w_{i+1}}(x)$. Once again by Lemma \ref{coloop} (b), $w$ is not a coloop of $P(\mcM)\setminus (\mcS_i \cup (A-w))/w_{i+1}$ hence $$ \chi_{P(\mcM)\setminus (S_i \cup (A-w)) /w_{i+1}}(x) =  \chi_{P(\mcM)\setminus (S_i \cup A) /w_{i+1}}(x) -  \chi_{P(\mcM)\setminus (S_i \cup A) /\{w_{i+1}, w\}}(x).$$ 
By Lemma \ref{coloop} (c) $P(\mcM)\setminus (S_i \cup A) /\{w_{i+1}, w\}$ contains a loop and therefore its characteristic polynomial is 0. This completes the proof of (\ref{induc final form}), which if $A = \mcQ_{v^{\perp}} - (S_i \cup w_{i+1})$ shows that
$$\chi_{P(\mcM)\setminus S_i /w_{i+1}}(x) =\chi_{P(\mcM) \setminus \mcQ_{v^{\perp}}^{*w_{i+1}} /w_{i+1}}(x).$$

Finally  by Theorems \ref{contraction proj} (where $S = \{w_{i+1}\}$) and \ref{char $q$-matroid/matroid} we get
\begin{equation}\label{equ rewrting sum}
    \chi_{P(\mcM)\setminus S_i /w_{i+1}}(x) = \chi_{\mcM/w_{i+1}}(x).
\end{equation}
Since the above induction holds true for any $i$ chosen, then (\ref{equ rewrting sum}) holds for all $0 \leq i \leq q^{n-1}-2$ and
\begin{align*}
    \sum_{i=0}^{q^{n-1}-2} \chi_{P(\mcM) \setminus \mcS_i / w_{i+1}}(x) &= \sum_{i=0}^{q^{n-1}-2} \chi_{\mcM/w_{i+1}}(x) \\
    &= \sum_{w \in \mcQ_{v^{\perp}}^{*e}} \chi_{\mcM/w} (x).
\end{align*}

Substituting (\ref{final form}) into the equation of Proposition \ref{intermediate form} gives the desired result.
\end{proof}

\section{Rank Metric and Linear Block Codes}

The study of matroids and $q$-matroids plays an important role in coding theory. In fact, it is well known that $\FF_{q^m}$-linear rank metric codes induce $q$-matroids, and that linear block codes with the Hamming metric give rise to matroids. Furthermore many of the code invariants can be determined from the induced ($q$-)matroid. 
In \cite{ABNR21}, Alfarano and co-authors showed that an $\FF_{q^m}$-linear rank metric code $\C$ induces a linear block code that shares similar parameters. We show in this section how the projectivization matroid of a $q$-matroid relates to the matroid associated to that linear block code. Furthermore we use this relation and results from Section 5 to show the $q$-analogue of the critical theorem in terms of $q$-matroids and $\FF_{q^m}$-linear rank metric codes. 

We start the section by recalling some coding theory concepts. Throughout, let $\FF_{q^m}$ be a field extension of $\FF_q$ of degree $m$. Furthermore let $\Gamma$ be a basis of the vector space $\FF_{q^m}$ over $\FF_q$. For all $v = (v_1, \cdots v_n) \in \FF_{q^m}^n$, let $\Gamma(v)$ be the $n \times m$ matrix such that the $i^{\textup{th}}$ row of $\Gamma(v)$ is the coordinate  vector of $v_i$ with respect to the basis $\Gamma$.  Let $\rk_{\FF}(-)$, $\colsp_{\FF}(-), \rowsp_{\FF}(-)$ respectively denote the rank, column space and row space of a matrix over the field $\FF$. Moreover, throughout the section, we fix the NSBF on $\FF_q^n$ to be the standard dot product. We define the following two weight functions. 

\begin{definition}
For all $v \in \FF_{q^m}$, the \emph{Hamming weight} $\omega_H$, and the \emph{rank weight} $\omega_R$ of $v$ are defined as follow:
\begin{align*}
    \omega_H(v) &:= \# \textup{non-zero entries of }v  \\
    \omega_R(v) &:= \rk_{\FF_q} (\Gamma(v)).
\end{align*}
\end{definition}
It is well known that the rank weight is independent of the basis $\Gamma$ chosen.
Both weight functions induce a metric on $\FF_{q^m}^n$, where  $d_{\Delta}(v, w) = \omega_{\Delta}(v - w)$ for $v, w \in \FF_{q^m}$ and $\Delta \in \{H, R\}$.
A \emph{linear block code} is a subspace of the metric space $(\FF_{q^m}^n, d_H)$, and an \emph{$\FF_{q^m}$-linear rank metric code} is a subspace of the metric space $(\FF_{q^m}^n, d_R)$.
Given a code  $\C \leq \FF_{q^m}^n$, let $d_H(\C)$, respectively $d_R(\C)$, denote the minimum weight over all non zero elements of the code.
If $\dim \C = k$, then $d_{\Delta}(\C) \leq n - k +1$ for $\Delta \in \{H, R\}.$
For each metric, the above bound is called the \emph{Singleton bound}. $\C$ is said to be \emph{maximum distance separable}, respectively \emph{maximum rank distance} if the Hamming-metric, respectively the rank-metric, Singleton bound is achieved.
For both metrics, the weight distribution of the code is defined as follows.

\begin{definition}
Let $\C \leq \FF_{q^m}^n$ be a code. For $\Delta \in \{H, R\}$, let 
$$W_{\Delta}^{(i)}(\C) := \left| \{v \in \C \, : \omega_{\Delta}(v) = i\}\right|.$$
Furthermore let $W_{\Delta}(\C) := (W_{\Delta}^{(i)}(\C) \, : 0 \leq i \leq n)$.  $W_{H}(\C)$ is called the \emph{Hamming weight distribution} of $\C$ and $W_R(\C)$ is called the \emph{rank weight distribution} of $\C$.  
\end{definition}

We now introduce two notions of support for elements of $\FF_{q^m}^n$.

\begin{definition}\label{support}
Let $v = (v_1 , \cdots , v_n) \in \FF_{q^m}^n$ and $V \subseteq \FF_{q^m}^n$. 
\begin{align*}
    S_{H}(v) &= \{ i \, : v_i \neq 0\} \quad \textup{and} \quad S_H(V) = \bigcup_{v \in V} S_H(v) \\
    S_R(v) &= \colsp_{\FF_q}(\Gamma(v)) \quad \textup{and} \quad S_R(V) = \sum_{v \in V} S_R(v)
\end{align*}

$S_H$, respectively $S_R$, are called the \emph{Hamming support} and \emph{rank support} of $v \in \FF_{q^m}^n$ or $V \subseteq \FF_{q^m}^n$. 
\end{definition}

Once again, the rank support of an element $v \in \FF_{q^m}^n$, or subset $V \subseteq \FF_{q^m}$, is independent of the basis $\Gamma$ chosen. Furthermore, a linear block code, respectively $\FF_{q^m}$-linear rank metric code, $\C \leq \FF_{q^m}^n$ is said to be \emph{non-degenerate} if $S_H(\C) = [n]$, respectively $S_R(\C) = \FF_q^n$.
Given a code $\C \leq \FF_{q^m}^n$, it is of interest to consider the set of elements of $\C$ with a given support.

\begin{definition}
Let $\C \leq \FF_{q^m}^n$ be a code,  $A \subseteq [n]$ and $V \leq \FF_q^n$. Let 
\begin{align*}
\C_H(A) &:= \{v \in \C \, : \, S_H(v) = A\} \\
\C_R(V) &:= \{v \in \C \, : \, S_R(v) = V\}
\end{align*}
\end{definition}

We now make the connection between codes and ($q$-)matroids. Note that linear block codes and $\FF_{q^m}$-linear rank metric codes can be represented via a generating metric $G \in \FF_{q^m}^{k \times n}$, where $\C := \rowsp_{\FF_{q^m}}(G)$.
Both a matroid and a $q$-matroid can be induced from the generating matrix. The following construction is well known for matroids (see \cite[Sec.6]{oxley}) and has been established in \cite[Sec. 5]{JP18} for $q$-matroids. 

\begin{proposition}\label{rank representable}
Let $\C \leq \FF_{q^m}^n$ be a code and $G \in \FF_{q^m}^{k \times n}$ a generating matrix of $\C$. For $i \in [n]$, let $e_i \in \FF_q^n$ denote the $i^{th}$ standard basis vector and for $V \leq \FF_q^n$ let $Y_V \in \FF_q^{n \times t}$ such that $\colsp_{\FF_q}(Y_V) = V$. Define $r: [n] \rightarrow \NN_0$ and $\rho: \mcL(\FF_q^n) \rightarrow \NN_0$ such that:
\begin{align*}
     r(A) &= \rk_{\FF_{q^m}} \left( G \cdot \begin{bmatrix} e_{i_1} &\cdots & e_{i_a} \end{bmatrix} \right) \textup{ for all } A \subseteq [n]\\
    \rho(V) &= \rk_{\FF_{q^m}}\left(G \cdot Y_V \right) \textup{ for all } V \leq \FF_q^n.
\end{align*}

Then $M_{\C} := ([n], r)$ is a matroid and $\mcM_{\C} = (\FF_q^n, \rho)$ is a $q$-matroid and are called the matroid (resp.\ $q$-matroid) associated with $\C$. 
\end{proposition}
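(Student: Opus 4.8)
The plan is a direct verification of the rank axioms in each case, using the matroid statement as a template for the $q$-matroid statement.

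For $M_{\C}$, the first observation is that $G\begin{bmatrix} e_{i_1} & \cdots & e_{i_a}\end{bmatrix}$ is exactly the column submatrix $G_A$ of $G$ whose columns are those indexed by $A$, so $r(A) = \dim_{\FF_{q^m}}\colsp_{\FF_{q^m}}(G_A)$. Then (R1) is immediate since $G_A$ has $|A| = a$ columns (rank at most $a$, at least $0$); (R2) holds because $A \subseteq B$ forces $\colsp_{\FF_{q^m}}(G_A) \subseteq \colsp_{\FF_{q^m}}(G_B)$; and (R3) follows from the identities $\colsp_{\FF_{q^m}}(G_{A\cup B}) = \colsp_{\FF_{q^m}}(G_A) + \colsp_{\FF_{q^m}}(G_B)$ and $\colsp_{\FF_{q^m}}(G_{A\cap B}) \subseteq \colsp_{\FF_{q^m}}(G_A)\cap \colsp_{\FF_{q^m}}(G_B)$ together with the Grassmann dimension formula $\dim(U_1 + U_2) + \dim(U_1 \cap U_2) = \dim U_1 + \dim U_2$.

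For $\mcM_{\C}$ I would first check that $\rho$ is well-defined, i.e. independent of the choice of $Y_V$. If $Y_V$ and $Y_V'$ both have $\FF_q$-column space $V$, then their columns span the same $\FF_{q^m}$-subspace $\widehat V \le \FF_{q^m}^n$, namely the $\FF_{q^m}$-span of any $\FF_q$-basis of $V$; hence $\colsp_{\FF_{q^m}}(GY_V) = G\widehat V = \colsp_{\FF_{q^m}}(GY_V')$ and the two ranks agree, so $\rho(V) = \dim_{\FF_{q^m}}(G\widehat V)$ is well-defined. One records the elementary facts that $\dim_{\FF_{q^m}}\widehat V = \dim_{\FF_q} V = \dim V$ (invariance of dimension under extension of scalars), that $\widehat{V+W} = \widehat V + \widehat W$, and that $\widehat{V\cap W} \subseteq \widehat V \cap \widehat W$. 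With these in hand, (qR1), (qR2) and (qR3) follow by repeating the argument for $M_{\C}$ with $A,B$ replaced by $\widehat V, \widehat W$: (qR1) from $\dim(G\widehat V) \le \dim \widehat V = \dim V$; (qR2) from $V \le W \Rightarrow \widehat V \le \widehat W \Rightarrow G\widehat V \le G\widehat W$; (qR3) from $G(\widehat V + \widehat W) = G\widehat V + G\widehat W$, $G(\widehat{V\cap W}) \le G\widehat V \cap G\widehat W$, and the Grassmann formula applied to $G\widehat V, G\widehat W$.

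There is no serious obstacle here: the verification is routine linear algebra, and indeed the statement is standard (one may alternatively cite \cite[Sec.~6]{oxley} for $M_{\C}$ and \cite[Sec.~5]{JP18} for $\mcM_{\C}$). The only point requiring genuine care, and the one thing that distinguishes the $q$-matroid case from the matroid case, is bookkeeping the two fields: one must confirm that the assignment $V \mapsto \widehat V$ is well-defined on $\FF_q$-subspaces, is order-preserving, commutes with sums, carries intersections into intersections, and preserves dimension, so that the elementary argument for $M_{\C}$ transfers verbatim to $\mcM_{\C}$.
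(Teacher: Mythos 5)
Your verification is correct: the submodularity arguments via the Grassmann formula, and the check that $V\mapsto\widehat V$ is well-defined, dimension-preserving, sum-compatible and intersection-compatible under extension of scalars from $\FF_q$ to $\FF_{q^m}$, are exactly the points that need to be made. The paper itself gives no proof of this proposition --- it only cites \cite[Sec.~6]{oxley} for $M_{\C}$ and \cite[Sec.~5]{JP18} for $\mcM_{\C}$ --- and your argument is the standard one those references carry out, so there is nothing further to compare.
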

Note that neither $M_{\C}$ nor $\mcM_{\C}$ depend on the choice of generating matrix for $\C$.
Furthermore a matroid or $q$-matroid is $\FF_{q^m}$-representable if it is induced by a linear block code or rank metric code $\C \leq \FF_{q^m}^n$.
The ($q$-)matroid induced by a code is a useful tool to determine some of the code's invariants. In the rest of this section, we consider invariants of the code that are determined by the characteristic polynomial of the induce ($q$-)matroid.
We  first recall the notion of weight enumerator of a ($q$-)matroid. The weight enumerator of the $q$-matroid was defined in \cite[Def. 43]{BCIJ21} and a similar concept was established in \cite{Greene} for matroids.

\begin{definition}\label{weight distribution matroid}
Let $M = (S, r)$ be a matroid and $\mcM = (E, \rho)$ be a $q$-matroid, with $|S| = n = \dim E$.  Let
\begin{align*}
    A_M^{(i)}(x) &= \sum_{A \subseteq S, |A| = i} \chi_{M/(S-A)}(x)\\
    A_{\mcM}^{(i)}(x) &= \sum_{V \leq E, \dim V = i} \chi_{\mcM/V^{\perp}}(x)
\end{align*}
The \emph{weight enumerator} of the matroid, respectively $q$-matroid, is the list $A_M := (A_M^{(i)}(x) \, : \, 1 \leq i \leq n)$, respectively $A_{\mcM} := (A_{\mcM}^{(i)}(x) \, : \, 1 \leq i \leq n)$.  
\end{definition}

Note in the above equations that if $S-A$ or $V^{\perp}$ are not flats of their respective matroid or $q$-matroid, then $\chi_{M/(S-A)}(x) = 0 = \chi_{\mcM/V^{\perp}}(x)$. Hence the summands of the weight enumerator can be restricted to the complement, respectively the orthogonal space, of flats. Thus we have the following result, which is well-known for matroids (see \cite[Prop 3.3]{Greene}) and was hinted at in \cite{BCIJ21} for $q$-matroids.

\begin{theorem}\label{weight enum flats}
Let $M = (S, r)$ be a matroid and $\mcM = (E, \rho)$ be a $q$-matroid, with $|S| = n = \dim E$. Then
\begin{align*}
    A_M^{(i)}(x) &= \sum_{F \in \F_M, |F| =n- i} \chi_{M/F}(x),\\
    A_{\mcM}^{(i)}(x) &= \sum_{F \in \F_{\mcM}, \dim F = n - i} \chi_{\mcM/F}(x).
    \end{align*}
\end{theorem}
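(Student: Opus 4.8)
The plan is to reduce the claimed identity to the definition of the weight enumerator (Definition \ref{weight distribution matroid}) by showing that the only nonzero summands come from complements of flats. I will handle the matroid and the $q$-matroid cases in parallel, since the arguments are identical after translating ``complement'' to ``orthogonal complement'' and ``cardinality'' to ``dimension.''

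First I would recall that by Theorem \ref{char matroid} (resp.\ Theorem \ref{loop char 0}) a matroid (resp.\ $q$-matroid) with a loop has characteristic polynomial identically $0$. So in $A_M^{(i)}(x) = \sum_{A \subseteq S, |A| = i} \chi_{M/(S-A)}(x)$, a summand $\chi_{M/(S-A)}(x)$ is nonzero only if $M/(S-A)$ is loopless. By Proposition \ref{contraction loop}, $M/(S-A)$ is loopless if and only if $S - A \in \F_M$. Writing $F := S - A$, the condition $|A| = i$ becomes $|F| = n - i$, and the sum over such $A$ becomes the sum over flats $F \in \F_M$ with $|F| = n-i$. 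This yields $A_M^{(i)}(x) = \sum_{F \in \F_M,\, |F| = n-i} \chi_{M/F}(x)$, as desired. The $q$-matroid case is verbatim the same: a summand $\chi_{\mcM/V^{\perp}}(x)$ in $A_{\mcM}^{(i)}(x) = \sum_{V \leq E, \dim V = i} \chi_{\mcM/V^{\perp}}(x)$ is nonzero only if $\mcM/V^{\perp}$ is loopless, which by Proposition \ref{contraction loop} happens exactly when $V^{\perp} \in \F_{\mcM}$; and since the NSBF is nondegenerate, $\dim V^{\perp} = n - \dim V = n - i$, so writing $F := V^{\perp}$ (and noting $V \mapsto V^{\perp}$ is a bijection on $\mcL(E)$) the sum collapses to $\sum_{F \in \F_{\mcM},\, \dim F = n - i} \chi_{\mcM/F}(x)$.

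There is essentially no hard step here; the only thing to be careful about is the bookkeeping of the index shift and making sure the map $A \mapsto S - A$ (resp.\ $V \mapsto V^{\perp}$) is a bijection between the index sets, so that terms are neither double-counted nor dropped. In the $q$-matroid case one should also explicitly invoke nondegeneracy of the fixed NSBF to get $\dim V^{\perp} = n - \dim V$, and recall from Definition \ref{dual $q$-matroid} / the surrounding discussion that $\mcM/V^{\perp}$ and its loop-freeness do not depend (up to equivalence) on the choice of NSBF, so the statement is well posed. I would present the proof as a single short paragraph covering both cases simultaneously, pointing to Propositions \ref{contraction loop}, Theorems \ref{char matroid} and \ref{loop char 0} for the loop/flat dichotomy.
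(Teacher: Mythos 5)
Your proposal is correct and follows the same route as the paper: the paper's justification is precisely the remark preceding the theorem, namely that $\chi_{M/(S-A)}(x)=0=\chi_{\mcM/V^{\perp}}(x)$ whenever $S-A$ (resp.\ $V^{\perp}$) is not a flat, because the contraction then has a loop, so the sums in Definition \ref{weight distribution matroid} restrict to complements (resp.\ orthogonal spaces) of flats. Your extra care with the bijections $A \mapsto S-A$ and $V \mapsto V^{\perp}$ and the index shift is sound but adds nothing beyond what the paper leaves implicit.
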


It was shown in \cite[Lem 49]{BCIJ21} that the weight enumerator of a representable $q$-matroid is closely related to the weight distribution of its associated code. For matroids a similar relation holds and was established in \cite[Prop 3.2]{Greene}.

\begin{theorem}\label{weight distribution equality}
Let $\C \leq \FF_{q^m}^n$ be a code, $M$ and $\mcM$ be, respectively, the matroid and $q$-matroid induced by $\C$. Let $A \subseteq [n]$ and $V \leq \FF_q^n$. Then 
\begin{itemize}
    \item[(1)] $\chi_{M/A}(q^m) = |\C_H([n]-A)|$ \quad and \quad $W_H^{(i)}(\C) = A_{M}^{(i)}(q^m)$.
    \item[(2)]$ \chi_{\mcM/V}(q^m) = |\C_R(V^{\perp})|$ \quad and \quad $W_R^{(i)}(\C) = A_{\mcM}^{(i)}(q^m).$
\end{itemize}

\end{theorem}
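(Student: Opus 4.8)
The plan is to derive both statements from a single counting lemma together with M\"obius inversion; I will spell out the rank case and observe that the Hamming case is the same argument with subsets of $[n]$ in place of subspaces of $\FF_q^n$, which is no harder.

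\emph{Counting codewords with bounded support.} First fix a full-rank generator matrix $G\in\FF_{q^m}^{k\times n}$ of $\C$, so that $x\mapsto xG$ is an $\FF_{q^m}$-linear isomorphism $\FF_{q^m}^{k}\to\C$. For $W\leq\FF_q^n$ let $W\otimes_{\FF_q}\FF_{q^m}$ denote the $\FF_{q^m}$-span of $W$ inside $\FF_{q^m}^n$; unravelling the definition of $\Gamma(v)$ shows that $S_R(v)\subseteq W$ if and only if $v\in W\otimes_{\FF_q}\FF_{q^m}$, so $\{v\in\C:S_R(v)\subseteq W\}=\C\cap(W\otimes_{\FF_q}\FF_{q^m})$ is an $\FF_{q^m}$-subspace. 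Using $W\otimes_{\FF_q}\FF_{q^m}=\{u\in\FF_{q^m}^n:\langle u,y\rangle=0\text{ for all }y\in W^{\perp}\}$, a codeword $xG$ lies in this set exactly when $x$ is orthogonal in $\FF_{q^m}^{k}$ to every $Gy$ with $y\in W^{\perp}$, i.e.\ to $G(W^{\perp}\otimes_{\FF_q}\FF_{q^m})$; by Proposition~\ref{rank representable} this last subspace has dimension $\rho(W^{\perp})$. Since $\rho(\FF_q^n)=\rk G=k$, this gives
$$\bigl|\{v\in\C:S_R(v)\subseteq W\}\bigr|=(q^m)^{\,\rho(E)-\rho(W^{\perp})}\qquad\text{for every }W\leq\FF_q^n,$$
and, by the analogous argument with coordinate subsets replacing subspaces,
$$\bigl|\{v\in\C:S_H(v)\subseteq B\}\bigr|=(q^m)^{\,r(S)-r(S-B)}\qquad\text{for every }B\subseteq[n].$$

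\emph{M\"obius inversion.} The sets $\C_R(U)$ with $U\leq W$ partition $\{v:S_R(v)\subseteq W\}$, so $\sum_{U\leq W}|\C_R(U)|=\bigl|\{v:S_R(v)\subseteq W\}\bigr|$; M\"obius inversion in the lattice $\mcL(V^{\perp})$, applied with $W=V^{\perp}$, yields
$$|\C_R(V^{\perp})|=\sum_{U\leq V^{\perp}}\mu_{\mcL(V^{\perp})}(U,V^{\perp})\,(q^m)^{\rho(E)-\rho(U^{\perp})}.$$
Orthogonal complementation is an order-reversing isomorphism from $\mcL(V^{\perp})$ onto the interval $[V,E]$ of $\mcL(E)$; writing $U=\widetilde{W}^{\perp}$ it sends $\mu_{\mcL(V^{\perp})}(U,V^{\perp})$ to $\mu_{\mcL(E)}(V,\widetilde{W})$, whence
$$|\C_R(V^{\perp})|=\sum_{V\leq\widetilde{W}\leq E}\mu_{\mcL(E)}(V,\widetilde{W})\,(q^m)^{\rho(E)-\rho(\widetilde{W})}.$$
Transporting along $[V,E]\cong\mcL(E/V)$, $\widetilde{W}\mapsto\widetilde{W}/V$, one has $\rho(E)-\rho(\widetilde{W})=\rho_{\mcM/V}(E/V)-\rho_{\mcM/V}(\widetilde{W}/V)$ and $\mu_{\mcL(E)}(V,\widetilde{W})=\mu_{\mcL(E/V)}(0,\widetilde{W}/V)$, so the right-hand side is precisely the defining sum of $\chi_{\mcM/V}(q^m)$. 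This proves $\chi_{\mcM/V}(q^m)=|\C_R(V^{\perp})|$, and the parallel computation over $2^{B}$ from the second display above — M\"obius inversion, the substitution $D\mapsto S-D$, and expanding the definition of $\chi_{M/A}$ — gives $\chi_{M/A}(q^m)=|\C_H([n]-A)|$.

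\emph{Weight distributions.} Finally, since $\omega_R(v)=\rk_{\FF_q}\Gamma(v)=\dim_{\FF_q}S_R(v)$, summing $\chi_{\mcM/V^{\perp}}(q^m)=|\C_R(V)|$ (the identity just proved, using $(V^{\perp})^{\perp}=V$) over all $V$ with $\dim V=i$ gives $A_{\mcM}^{(i)}(q^m)=\sum_{\dim V=i}|\C_R(V)|=W_R^{(i)}(\C)$, and likewise $A_M^{(i)}(q^m)=\sum_{|A|=i}\chi_{M/(S-A)}(q^m)=\sum_{|A|=i}|\C_H(A)|=W_H^{(i)}(\C)$. The main obstacle is the counting lemma — identifying $\bigl|\{v:S_R(v)\subseteq W\}\bigr|$ with the closed form $(q^m)^{\rho(E)-\rho(W^{\perp})}$, and its Hamming analogue — together with correctly matching the M\"obius function of $\mcL(V^{\perp})$ with that of the interval $[V,E]\subseteq\mcL(E)$ under orthogonal complementation; once that exponent bookkeeping is in place, the remainder is formal.
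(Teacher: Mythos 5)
Your proof is correct. The paper itself gives no proof of this theorem --- it defers to \cite[Prop 3.2]{Greene} for the Hamming case and \cite[Lem 49]{BCIJ21} for the rank case --- and your argument (counting codewords with support contained in a fixed space via the rank of $G\cdot Y_{W^\perp}$, then M\"obius inversion transported through the anti-isomorphism $U\mapsto U^\perp$ and the interval isomorphism $[V,E]\cong\mcL(E/V)$) is precisely the standard route taken in those references, with the exponent bookkeeping done correctly.
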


\begin{remark}\label{support/flats}
The above Theorem together with Proposition \ref{weight enum flats}, tells us that if $|\C_H([n] -A)|$ and $|\C_R(V^{\perp})|$ are non-zero then $A$ and $V$ are flats in $M$ and $\mcM$, respectively. 
\end{remark}

Because the weight enumerator of a ($q$-)matroid can be expressed in terms of flats, we get the following relation between the weight enumerator of a $q$-matroid and that of its projectivization matroid. 

\begin{proposition}\label{weight enum q-matroid/proj}
Let $\mcM = (E, \rho)$ be a q-matroid and $P(\mcM) = (\PE , r)$ its projectivization matroid. Then 
\begin{equation}
    A_{P(\mcM)}^{(j)}(x) = \begin{cases}  A_{\mcM}^{(i)}(x) & \textup{if } j = \frac{q^n - q^{n-i}}{q-1}, \\
    0 & \textup{otherwise}.
    \end{cases}
\end{equation}
\end{proposition}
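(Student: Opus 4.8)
The plan is to compare the two weight enumerators term by term using the flat descriptions from Theorem \ref{weight enum flats}, together with the flat isomorphism $\F_{\mcM} \cong \F_{P(\mcM)}$ from Lemma \ref{flats properties}, the contraction compatibility $\chi_{\mcM/F}(x) = \chi_{P(\mcM)/P(F)}(x)$ from Proposition \ref{char project equiv}, and the elementary count $|P(F)| = \dim_{\FF_q} F$ translated into the number of projective points, namely $|P(F)| = \frac{q^{\dim F}-1}{q-1}$. The groundset of $P(\mcM)$ has size $|\PE| = \frac{q^n-1}{q-1}$.

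First I would invoke Theorem \ref{weight enum flats} for $P(\mcM)$:
$$A_{P(\mcM)}^{(j)}(x) = \sum_{F' \in \F_{P(\mcM)},\ |F'| = \frac{q^n-1}{q-1} - j} \chi_{P(\mcM)/F'}(x).$$
By Lemma \ref{flats properties}, every flat $F' \in \F_{P(\mcM)}$ is of the form $F' = P(F)$ for a unique $F \in \F_{\mcM}$, and $|P(F)| = \frac{q^{\dim F}-1}{q-1}$. So the condition $|F'| = \frac{q^n-1}{q-1} - j$ becomes $\frac{q^{\dim F}-1}{q-1} = \frac{q^n-1}{q-1} - j$, i.e. $j = \frac{q^n-1}{q-1} - \frac{q^{\dim F}-1}{q-1} = \frac{q^n - q^{\dim F}}{q-1}$. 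Setting $i := n - \dim F$, this is exactly $j = \frac{q^n - q^{n-i}}{q-1}$. Thus the sum defining $A_{P(\mcM)}^{(j)}(x)$ is nonempty only when $j$ has this form for some $i \in \{0,\dots,n\}$ (and one should note these values of $j$ are distinct for distinct $i$, since $q^{n-i}$ is strictly decreasing in $i$); otherwise $A_{P(\mcM)}^{(j)}(x) = 0$, giving the second case.

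For the first case, fix $i$ and the corresponding $j = \frac{q^n - q^{n-i}}{q-1}$. The flats $F'$ contributing are precisely the $P(F)$ with $F \in \F_{\mcM}$ and $\dim F = n - i$, and by Proposition \ref{char project equiv} (applied with $V = F$, a flat) we have $\chi_{P(\mcM)/P(F)}(x) = \chi_{\mcM/F}(x)$. Hence
$$A_{P(\mcM)}^{(j)}(x) = \sum_{F \in \F_{\mcM},\ \dim F = n-i} \chi_{\mcM/F}(x) = A_{\mcM}^{(i)}(x),$$
where the last equality is Theorem \ref{weight enum flats} for $\mcM$. This completes the argument. The only mild obstacle is bookkeeping: making sure the map $F \mapsto P(F)$ is a bijection between $\{F \in \F_{\mcM} : \dim F = n-i\}$ and $\{F' \in \F_{P(\mcM)} : |F'| = \frac{q^n-1}{q-1} - j\}$ and that the index $j$ is unambiguous — both follow immediately from Lemma \ref{flats properties} and the strict monotonicity of $i \mapsto q^{n-i}$.
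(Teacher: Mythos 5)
Your proposal is correct and follows essentially the same route as the paper: both arguments combine the flat-indexed form of the weight enumerator (Theorem \ref{weight enum flats}), the bijection $F \mapsto P(F)$ between $\F_{\mcM}$ and $\F_{P(\mcM)}$ from Lemma \ref{flats properties} together with the point count $|P(F)| = \frac{q^{\dim F}-1}{q-1}$, and the identity $\chi_{\mcM/F}(x) = \chi_{P(\mcM)/P(F)}(x)$ from Proposition \ref{char project equiv}. (The only cosmetic slip is the phrase ``$|P(F)| = \dim_{\FF_q} F$'' in your preamble, which you immediately replace with the correct formula actually used.)
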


\begin{proof}
First, recall by Lemma \ref{flats properties}, $\F_{P(\mcM)} = \{P(F) \,: \, F \in \F_{\mcM}\}$ and $\dim F =n- i \Leftrightarrow |P(F)| = \frac{q^{n-i} -1}{q-1} \Leftrightarrow |\PE - P(F)| = \frac{q^n - q^{n-i}}{q-1} $. Furthermore, by Proposition \ref{char project equiv}, if $X \subseteq \PE$ is not a flat, then $\chi_{P(\mcM)/X}(x) = 0$. So for all $1 \leq j \leq \frac{q^n-1}{q-1}$, such that $j \neq \frac{q^n - q^{n-i}}{q-1}$ for some $1 \leq i \leq n$,  we get $A_{P(\mcM)}^{(j)}(x) = 0$. 
Now assume $j = \frac{q^n-q^{n-i}}{q-1}$ for some $1 \leq i \leq n$. Then 
\begin{align*}
    A_{\mcM}^{(i)}(x) &= \sum_{F \in \F_{\mcM}, \dim F = n-i} \chi_{\mcM/F}(x) \\
                    &= \sum_{F \in \F_{\mcM}, \dim F = n-i} \chi_{P(\mcM)/P(F)}(x)\\
                    &= \sum_{P(F) \in \F_{P(\mcM)}, |P(F)| = \frac{q^{n-i}-1}{q-1}} \chi_{P(\mcM)/P(F)}(x)\\
                    &= A_{P(\mcM)}^{(j)}(x),
\end{align*}
where the second equality follows from Proposition \ref{char project equiv}.
\end{proof}

With the above setup, we now discuss the linear block code induced by an $\FF_{q^m}$-linear rank metric code, as introduced in \cite{ABNR21}. In their paper, the authors use $q$-systems and projective systems to introduce the Hamming-metric code associated to a rank metric code.
We use a slightly different approach to introduce the associated Hamming metric code that does not require the previously stated notions. 

\begin{definition}\label{Fq decomp}
Let $\C \leq \FF_{q^m}^n$ be an $\FF_{q^m}$-linear rank metric code and let $G \in \FF_{q^m}^{k\times n}$ be a generating matrix of $\C$. Furthermore let $H \in \FF_q^{n \times \frac{q^n-1}{q-1}}$ where each column of $H$ is a representative of a distinct element of $\PF_q^n$. We call the matrix $G^H := G \cdot H$  an \emph{$\FF_q$-decomposition} of $G$ via $H$ and $\C^H:= \rowsp_{\FF_{q^m}}(G^H)$ is called a \emph{Hamming-metric code associated to $\C$ via $H$}
\end{definition}

\begin{remark}

Given a non-degenerate $\FF_{q^m}$-rank metric code $\C$, the code $\C^H$ of Definition \ref{Fq decomp} is a Hamming-metric code associated with $\C$ as in \cite[Def. 4.5]{ABNR21}. In fact, it easy to show  the projective system induced by the columns of $G^H$ is a representative of the equivalence class $(\mathrm{Ext}^H \circ \Phi)([\C])$ as introduced in \cite{ABNR21}.
Furthermore note that unlike the construction in \cite{ABNR21}, Definition \ref{Fq decomp} does not depend on $q$-systems and projective systems hence we do not require $\C$ to be non-degenerate code.
Finally, as noted in \cite{ABNR21}, a Hamming metric code associated to an $\FF_{q^m}$-linear rank metric code $\C$ is not unique. In our case, $\C^H$ depends on the choice of the matrix $H$ of the $\FF_q$-decomposition. However, all Hamming-metric codes associated with $\C$ are monomially equivalent.

\end{remark}

Because $\C^H$ is a linear block code, it induces a matroid $M_{\C^H}$. It turns out that $M_{\C^H}$ is equivalent to the projectivization matroid $P(\mcM_{\C})$ of the $q$-matroid induced by $\C$.

\begin{theorem}\label{representable project}
Let $\C \leq \FF_{q^m}^n$ be a rank metric code, $\mcM_{\C}$  its associated $q$-matroid, and $P(\mcM_{\C})$ its projectivization matroid. Furthermore let $\C^H$ be a Hamming-metric code associated to $\C$ via $H$ and $M_{\C^H}$ its induced matroid. Then 
$$P(\mcM_{\C}) \cong M_{\C^H} \quad \textit{as  matroids}.$$
\end{theorem}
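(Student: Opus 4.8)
The plan is to exhibit an explicit equivalence $\psi$ of the two matroids, using the columns of the matrix $H$ from Definition \ref{Fq decomp} to identify the groundsets. Write $N := \frac{q^n-1}{q-1}$, so $M_{\C^H}$ has groundset $[N]$, while $P(\mcM_{\C})$ has groundset $\PP\FF_q^n$, which also has $N$ elements. Label the columns of $H$ as $h_1, \dots, h_N \in \FF_q^n$; by the defining property of $H$ the points $\langle h_1 \rangle, \dots, \langle h_N \rangle$ are exactly the distinct elements of $\PP\FF_q^n$, so $\psi : \PP\FF_q^n \to [N]$, $\langle h_j \rangle \mapsto j$, is a bijection. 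It remains to check that $\psi$ intertwines the two rank functions.

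First I would rewrite the rank function of $M_{\C^H}$ along $\psi$. Fix $S \subseteq \PP\FF_q^n$ and set $A := \psi(S) \subseteq [N]$. Since $G^H = G \cdot H$, column $j$ of $G^H$ equals $G h_j$, so Proposition \ref{rank representable} applied to $\C^H \leq \FF_{q^m}^N$ gives
$$ r_{M_{\C^H}}(A) \;=\; \rk_{\FF_{q^m}}\left( G^H \cdot [\,e_j : j \in A\,] \right) \;=\; \rk_{\FF_{q^m}}\left( G \cdot H[\,\cdot\,, A] \right), $$
where $H[\,\cdot\,,A]$ is the $\FF_q$-submatrix of $H$ with columns indexed by $A$, that is, the chosen representatives $\{ h_j : \langle h_j \rangle \in S \}$. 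Picking one nonzero vector from each line of $S$ spans the same $\FF_q$-subspace as all of $P^{-1}(S)$, so $\colsp_{\FF_q}(H[\,\cdot\,,A]) = \langle P^{-1}(S) \rangle_{\FF_q} = \langle S \rangle$ in the notation of Section~3.

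The key step is to connect $H[\,\cdot\,,A]$ with the defining formula for $\rho$. I would use the fact — which is precisely what makes the rank function of $\mcM_{\C}$ in Proposition \ref{rank representable} well defined — that $\rk_{\FF_{q^m}}(G \cdot Y)$ depends only on $\colsp_{\FF_q}(Y)$: if two $\FF_q$-matrices $Y, Y'$ have the same $\FF_q$-column space, then each column of $GY'$ is an $\FF_q$-, hence $\FF_{q^m}$-, linear combination of columns of $GY$ and conversely, so $\colsp_{\FF_{q^m}}(GY) = \colsp_{\FF_{q^m}}(GY')$ and their ranks coincide. Applying this with $Y = H[\,\cdot\,,A]$, whose column space is $\langle S \rangle$, yields
$$ r_{M_{\C^H}}(A) \;=\; \rk_{\FF_{q^m}}\left( G \cdot Y_{\langle S \rangle} \right) \;=\; \rho_{\mcM_{\C}}(\langle S \rangle) \;=\; r_{P(\mcM_{\C})}(S), $$
the last equality being Theorem \ref{$q$-matroid/matroid}. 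As $S$ was arbitrary, $\psi$ is an equivalence of matroids.

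The argument is essentially bookkeeping with the definitions, and the only delicate point is the rank invariance under replacing an $\FF_q$-matrix by another with the same $\FF_q$-column space; this is needed because the representative columns of $H[\,\cdot\,,A]$ are in general $\FF_q$-linearly dependent and hence do not literally coincide with the matrix $Y_V$ used to define $\rho$. Note that no non-degeneracy assumption on $\C$ enters, in agreement with the remark following Definition \ref{Fq decomp}.
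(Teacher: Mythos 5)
Your proposal is correct and follows essentially the same route as the paper: the same bijection $\psi(\langle h_i\rangle)=i$ and the same chain of rank equalities $r_{M_{\C^H}}(\psi(S)) = \rk_{\FF_{q^m}}(G\cdot H[\,\cdot\,,\psi(S)]) = \rho_{\mcM_\C}(\langle S\rangle) = r_{P(\mcM_\C)}(S)$. The only difference is that you explicitly justify that $\rk_{\FF_{q^m}}(G\cdot Y)$ depends only on $\colsp_{\FF_q}(Y)$ — a point the paper uses implicitly (it is the well-definedness of $\rho$ in Proposition \ref{rank representable}) — which is a reasonable bit of added care rather than a different argument.
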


\begin{proof}
Let $G \in \FF_{q^m}^{k \times n}$ be a generator matrix of $\C$ and let $G^H = G \cdot H$ be a $\FF_q$-decomposition of $G$ via $H$. 
Let $h_i$ be the $i^{th}$ column of $H$, hence $\PF_q^n = \{ \langle h_i \rangle \, : \, i \in \left[ \frac{q^n-1}{q-1} \right]$. Define the bijection $\psi : \PF_q^n \rightarrow \left[\frac{q^n-1}{q-1} \right]$, where $\psi(\langle h_i \rangle) = i$. 
Furthermore, let $A \subseteq \PF_q^n$, $\psi(A) = \{i_1, \cdots i_a\}$ and $Y_A := \begin{bmatrix} e_{i_1} & \cdots & e_{i_a} \end{bmatrix} \in \FF_q^{\frac{q^n-1}{q-1} \times |A|}$, where $e_j$ is the $j^{\textup{th}}$ standard basis element of $\FF_q^{\frac{q^n-1}{q-1}}$.
By Proposition \ref{rank representable} we get

\begin{align*}
    r_{M_{\C^H}}(\psi(A)) &= \rk_{\FF_{q^m}}(G^H \cdot \begin{bmatrix} e_{i_1}& \cdots & e_{i_a} \end{bmatrix})\\
    &= \rk_{\FF_{q^m}}(G \cdot H \cdot \begin{bmatrix} e_{i_1}& \cdots & e_{i_{a}} \end{bmatrix})\\
    &=\rk_{\FF_{q^m}}(G \cdot \begin{bmatrix} h_{i_1} & \cdots & h_{i_a} \end{bmatrix}) \\
    &= \rho_{\mcM_{\C}}( \langle h_{i_1}, \cdots , h_{i_a} \rangle_{\FF_q})\\
    &=r_{P(\mcM_{\C})}(A),
\end{align*}
where the last equality follows from Theorem \ref{$q$-matroid/matroid}.
\end{proof}

\begin{remark}\label{relabel}
The above theorem allows us to relabel the groundset $\left[\frac{q^n-1}{q-1} \right]$ of the matroid $M_{\C^H}$ in terms of the elements of the projective space $\PF_q^n$. Precisely if $G \cdot H$ is the $\FF_q$-decomposition associated to $\C^H$, relabel $i \in \left[\frac{q^n-1}{q-1} \right]$ by $\langle h_i \rangle \in \PF_q^n$, where $h_i$ is the $i^{\textup{th}}$ column of $H$.
\end{remark}

We get the following result as an immediate corollary of Theorem \ref{representable project}. 

\begin{corollary}
If $\mcM$ is $\FF_{q^m}$-representable then its projectivization matroid $P(\mcM)$ is $\FF_{q^m}$-representable. 
\end{corollary}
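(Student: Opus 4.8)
The plan is to derive the corollary directly from Theorem \ref{representable project}, with only one small auxiliary observation to record: that the projectivization construction is compatible with equivalence of $q$-matroids.

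First I would unwind the hypothesis. If $\mcM$ is $\FF_{q^m}$-representable, then there is an $\FF_{q^m}$-linear rank metric code $\C \leq \FF_{q^m}^n$ together with a linear isomorphism $\psi$ of groundspaces realizing $\mcM \cong \mcM_{\C}$, where $\mcM_{\C}$ is the $q$-matroid of Proposition \ref{rank representable}. Next I would check that $P(\mcM) \cong P(\mcM_{\C})$ as matroids: the linear isomorphism $\psi$ descends to a bijection of projective spaces $\PE_1 \to \PE_2$, and since $\langle \psi(S) \rangle = \psi(\langle S \rangle)$ for every $S \subseteq \PE_1$, one gets $r_{P(\mcM)}(S) = \rho_{\mcM}(\langle S \rangle) = \rho_{\mcM_{\C}}(\psi(\langle S \rangle)) = \rho_{\mcM_{\C}}(\langle \psi(S) \rangle) = r_{P(\mcM_{\C})}(\psi(S))$, which is exactly the definition of matroid equivalence.

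Then I would apply Theorem \ref{representable project}: choosing any matrix $H \in \FF_q^{n \times \frac{q^n - 1}{q-1}}$ whose columns represent the distinct points of $\PF_q^n$ and the associated Hamming-metric code $\C^H = \rowsp_{\FF_{q^m}}(G^H)$, that theorem gives $P(\mcM_{\C}) \cong M_{\C^H}$. Since $\C^H$ is a linear block code over $\FF_{q^m}$, Proposition \ref{rank representable} shows that $M_{\C^H}$ is $\FF_{q^m}$-representable, and representability is visibly preserved under matroid equivalence (a groundset bijection carries a generating matrix of $\C^H$ to the same code read in the relabelled coordinates, cf.\ Remark \ref{relabel}). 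Composing $P(\mcM) \cong P(\mcM_{\C}) \cong M_{\C^H}$ then yields the claim.

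Since every step is either a citation or routine bookkeeping, I do not expect a genuine obstacle here; the only line that really needs to be written out is the compatibility $P(\mcM) \cong P(\mcM_{\C})$, which rests on the identity $\langle \psi(S) \rangle = \psi(\langle S \rangle)$ for a linear isomorphism $\psi$ (the analogue, for ordinary isomorphisms, of the remark preceding Theorem \ref{contraction proj}).
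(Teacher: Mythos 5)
Your argument is correct and is exactly the route the paper intends: the paper states this as an immediate consequence of Theorem \ref{representable project}, identifying $P(\mcM_{\C})$ with the matroid of the linear block code $\C^H$. The extra bookkeeping you supply (that a linear isomorphism realizing $\mcM \cong \mcM_{\C}$ induces a matroid equivalence $P(\mcM) \cong P(\mcM_{\C})$ via $\langle \psi(S) \rangle = \psi(\langle S \rangle)$) is a routine detail the paper leaves implicit, and it is handled correctly.
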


In \cite[Theorem 4.8]{ABNR21}, it was established that the rank-weight distribution of a rank metric code $\C$ is closely related to the Hamming-weight distribution of any Hamming-metric code associated to $\C$. By Theorem \ref{weight distribution equality}, Proposition \ref{weight enum q-matroid/proj} and Theorem \ref{representable project} we arrive at the same result from a purely matroid/$q$-matroid approach. 

\begin{theorem}\cite[Thm 4.8]{ABNR21}
Let $\C$ be an $\FF_{q^m}$-linear rank metric code and $\C^H$ be a Hamming-metric code associated to $\C$.Then 

$$W_H^{(j)}(\C^H) = \begin{cases} W_R^{(i)}(\C) & \textup{if } j = \frac{q^n - q^{n-i}}{q-1},\\
                                        0 & \textup{otherwise.}
\end{cases}$$
\end{theorem}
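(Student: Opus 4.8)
The plan is to chain together the three structural results proved in this section. First I would record that, since $\C^H$ is a linear block code, Theorem \ref{weight distribution equality}(1) applied to $\C^H$ and its associated matroid $M_{\C^H}$ gives $W_H^{(j)}(\C^H) = A_{M_{\C^H}}^{(j)}(q^m)$ for every $j$. So it suffices to compute $A_{M_{\C^H}}^{(j)}(q^m)$.

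Next I would invoke Theorem \ref{representable project}, which says $P(\mcM_{\C}) \cong M_{\C^H}$ as matroids. Here I would insert a short observation: the weight enumerator of a matroid is an equivalence invariant. Indeed, if $\psi\colon S \to T$ is an equivalence of matroids it carries flats to flats, commutes with contraction, and preserves rank; hence by Theorem \ref{char matroid} it preserves characteristic polynomials, and so, comparing Definition \ref{weight distribution matroid} term by term (the cardinalities, flats, and contractions all transport along $\psi$), one gets $A_{M_{\C^H}}^{(j)}(x) = A_{P(\mcM_{\C})}^{(j)}(x)$ for all $j$. I would state this as a one-line lemma or remark before using it.

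Then I would apply Proposition \ref{weight enum q-matroid/proj} with $\mcM = \mcM_{\C}$ and $n = \dim \FF_q^n$: this gives $A_{P(\mcM_{\C})}^{(j)}(x) = A_{\mcM_{\C}}^{(i)}(x)$ when $j = \frac{q^n - q^{n-i}}{q-1}$ for some $1 \le i \le n$, and $A_{P(\mcM_{\C})}^{(j)}(x) = 0$ for all other $j$ in the range $1 \le j \le \frac{q^n-1}{q-1}$. Finally, evaluating at $x = q^m$ and using Theorem \ref{weight distribution equality}(2), which gives $A_{\mcM_{\C}}^{(i)}(q^m) = W_R^{(i)}(\C)$, produces exactly the claimed case distinction.

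The argument is essentially bookkeeping once those three results are available; the only point that needs a little care is the equivalence-invariance of the matroid weight enumerator, which I would verify directly from Definition \ref{weight distribution matroid} since every ingredient (set cardinalities, flats, contractions, characteristic polynomials) transfers along a rank-preserving groundset bijection. A minor secondary check is that the index ranges line up: as $i$ ranges over $1 \le i \le n$ the values $\frac{q^n - q^{n-i}}{q-1}$ are precisely the corank-counts of flats of $P(\mcM_{\C})$, so the ``otherwise $0$'' clause is consistent and no $j$ is missed.
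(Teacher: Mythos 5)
Your proposal is correct and follows exactly the route the paper takes: the paper derives this theorem by chaining Theorem \ref{weight distribution equality}, Proposition \ref{weight enum q-matroid/proj}, and Theorem \ref{representable project}, which is precisely your argument. Your explicit verification that the matroid weight enumerator is an equivalence invariant is a detail the paper leaves implicit, but it is a correct and harmless addition.
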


We conclude the paper by showing the $q$-analogue of the critical Theorem. The critical Theorem, introduced by Crapo and Rota \cite[Thm 1]{crapo1970foundations}, states that the characteristic polynomial of the matroid $M_{\C}$ induced by the linear block code $\C$ determines the number of multisets of codewords with a given support. It was nicely restated in \cite[Thm 2]{BRITZ200555} in terms of coding theory terminology (recall Definition \ref{support}).

\begin{theorem}\label{critical theorem}
Let $\C \leq \FF_{q}^n$ be a linear block code and $M = ([n], r)$ its induced matroid. For all $A \subseteq [n]$, the number of ordered $t$-tuples $V = (v_1, \cdots , v_t)$, where $v_j \in \C$ for all $1 \leq j \leq t$, such that $S_H(V) = A$ is given by $\chi_{M/([n]-A)}(q^t)$.  
\end{theorem}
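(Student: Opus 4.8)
The plan is to count the $t$-tuples directly and match the count against the series expansion of $\chi_{M/([n]-A)}(q^t)$, using nothing beyond inclusion--exclusion on the Boolean lattice of subsets of $A$. Fix a generator matrix $G \in \FF_q^{k\times n}$ of $\C$ whose rows are linearly independent, so that $k = \dim\C = r([n])$, and write $g_i$ for the $i$-th column of $G$. Every ordered $t$-tuple $(v_1,\dots,v_t)\in\C^t$ is the list of rows of a unique matrix $UG$ with $U\in\FF_q^{t\times k}$; thus $t$-tuples of codewords are in bijection with $\FF_q^{t\times k}$, and under this bijection a coordinate $i$ lies in $S_H(v_1,\dots,v_t)$ precisely when $Ug_i\neq 0$.

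First I would count the tuples whose support is contained in a fixed set $A'\subseteq[n]$. These correspond exactly to the matrices $U$ with $Ug_i=0$ for all $i\in[n]-A'$, i.e.\ to the linear maps $\FF_q^k\to\FF_q^t$ killing the subspace $\langle g_i : i\in[n]-A'\rangle_{\FF_q}$, which has dimension $r([n]-A')$ by definition of the representable matroid $M$. Such a map factors through a quotient of $\FF_q^k$ of dimension $k-r([n]-A')$, so the number of such tuples is $(q^t)^{\,k-r([n]-A')}=q^{\,t(r([n])-r([n]-A'))}$.

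Next, Möbius inversion on the lattice of subsets of $A$ (a standard inclusion--exclusion) gives that the number $N_{=A}$ of $t$-tuples with support exactly $A$ equals $\sum_{A'\subseteq A}(-1)^{|A|-|A'|}\,q^{\,t(r([n])-r([n]-A'))}$. Re-indexing by $B:=A-A'$, so that $[n]-A'=([n]-A)\cup B$ and $|A|-|A'|=|B|$, rewrites this as $\sum_{B\subseteq A}(-1)^{|B|}\,q^{\,t(r([n])-r(B\cup([n]-A)))}$. On the other hand, $M/([n]-A)$ has ground set $A$ and rank function $r_{M/([n]-A)}(B)=r(B\cup([n]-A))-r([n]-A)$, so expanding the definition of the characteristic polynomial gives $\chi_{M/([n]-A)}(x)=\sum_{B\subseteq A}(-1)^{|B|}\,x^{\,r([n])-r(B\cup([n]-A))}$; evaluating at $x=q^t$ produces exactly the previous sum, whence $N_{=A}=\chi_{M/([n]-A)}(q^t)$.

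I do not anticipate a serious obstacle. The only points needing care are justifying the bijection between $t$-tuples and matrices $U$ (which also gives $r([n])=\dim\C$ for a full-rank $G$), checking that the condition ``$U$ kills $\langle g_i : i\in[n]-A'\rangle$'' has the claimed number of solutions, and bookkeeping the two re-indexings so that the Möbius count and the polynomial expansion agree term by term. As sanity checks: if $[n]-A$ is not a flat then $M/([n]-A)$ has a loop $i\in A$, every tuple supported in $A$ automatically has $Ug_i=0$, so $N_{=A}=0$, matching $\chi_{M/([n]-A)}\equiv 0$; and $A=\emptyset$ gives the single all-zero tuple, matching $\chi_{M/[n]}(q^t)=1$.
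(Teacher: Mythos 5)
The paper does not prove this statement at all: it is quoted verbatim from the literature (Crapo--Rota's critical theorem, in the coding-theoretic form of Britz), so there is no in-paper argument to compare against. Your proof is correct and complete, and it is essentially the classical argument: parametrize ordered $t$-tuples of codewords by matrices $U\in\FF_q^{t\times k}$ via a full-rank generator matrix, observe that the number of tuples with support contained in $A'$ is $q^{t(r([n])-r([n]-A'))}$ because such $U$ are exactly the linear maps killing $\langle g_i : i\in[n]-A'\rangle$, and then apply inclusion--exclusion on $2^A$; the re-indexing $B=A-A'$ matches the resulting sum term by term with the defining expansion of $\chi_{M/([n]-A)}(q^t)$, using $r_{M/([n]-A)}(B)=r(B\cup([n]-A))-r([n]-A)$. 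All the steps check out, including the two sanity checks (the loop case when $[n]-A$ is not a flat, and $A=\emptyset$). The only cosmetic quibble is that in the displayed count $(q^t)^{k-r([n]-A')}$ you should say explicitly that $k=r([n])$ because the rows of $G$ were chosen independent, which you do note in passing; otherwise nothing is missing.
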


For our last result, we show an analogous statement for $\FF_{q^m}$-linear rank metric codes and $q$-matroids by using the projectivization matroid. It is worth mentioning that Alfarano and Byrne were able to show an analogue of the critical theorem for $q$-polymatroids and matrix rank metric codes by using a different approach involving the M\"obius inversion formula  \cite{ABcrit} . 

For the next results we make use of Remark \ref{relabel}, and relabel the elements of the groundset of the matroid induced by $\C^H$ in terms of elements of $\PF_q^n$. Following this relabeling, we can also describe the support of a codeword of $\C^H$ in terms of  the elements of $\PF_q^n$. More precisely, if $\C^H$ is induced by the $\FF_q$-decomposition  $G \cdot H$, for any $v \in \C^H$, let $S_H(v) = \{ \langle h_i \rangle \in \PF_q^n \, : \, v_i \neq 0\}$, where $ h_i$ and $v_i$ are respectively the $i^{\textup{th}}$ column of $H$ and the $i^{\textup{th}}$ component of $v$. Furthermore, we need the following well-known result for which we include a proof for self-containment. For two vectors $v, w$ we let $v \cdot w$ denote the standard dot-product.

\begin{lemma}\label{dual support}
Let $v \in \FF_{q^m}^n$,  $S_R(v) = W \leq \FF_q^n$ and $w \in \FF_q^n$. Then $v \cdot w = 0$ if and only if $w \in W^{\perp}$.
\end{lemma}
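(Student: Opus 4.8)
The plan is to unwind the definition of the rank support $S_R(v)$ in terms of the matrix $\Gamma(v)$ and then translate the vanishing condition $v \cdot w = 0$ into a statement about the column space of $\Gamma(v)$. Recall that $\Gamma(v)$ is the $n \times m$ matrix whose $i$-th row is the coordinate vector of $v_i \in \FF_{q^m}$ with respect to the fixed $\FF_q$-basis $\Gamma = (\gamma_1, \dots, \gamma_m)$, so that $v_i = \sum_{j=1}^m \Gamma(v)_{ij}\,\gamma_j$. For $w = (w_1, \dots, w_n) \in \FF_q^n$, the dot product $v \cdot w = \sum_{i=1}^n w_i v_i$ lies in $\FF_{q^m}$, and expanding in the basis $\Gamma$ gives $v \cdot w = \sum_{j=1}^m \bigl(\sum_{i=1}^n w_i \Gamma(v)_{ij}\bigr)\gamma_j = \sum_{j=1}^m (w^{\top}\Gamma(v))_j\,\gamma_j$. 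Since $\Gamma$ is a basis, $v \cdot w = 0$ in $\FF_{q^m}$ if and only if $w^{\top}\Gamma(v) = 0$ in $\FF_q^m$, i.e.\ $w$ is orthogonal (in $\FF_q^n$, with the standard dot product) to every column of $\Gamma(v)$.

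The second step is to observe that $W = S_R(v) = \colsp_{\FF_q}(\Gamma(v))$ by Definition \ref{support}, so $w^{\top}\Gamma(v) = 0$ is precisely the condition that $w$ is orthogonal to a spanning set of $W$, which is equivalent to $w \in W^{\perp}$. Chaining the two equivalences yields $v \cdot w = 0 \iff w \in W^{\perp}$, as claimed.

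This argument is entirely routine; the only point requiring a little care is the bookkeeping between the two dot products living in different spaces — the one defining $v \cdot w$ takes values in $\FF_{q^m}$, while the orthogonality defining $W^{\perp}$ is with respect to the standard dot product on $\FF_q^n$ (the NSBF fixed at the start of the section). Keeping straight that $v \cdot w = 0$ over $\FF_{q^m}$ unpacks, via linear independence of $\Gamma$ over $\FF_q$, into $m$ separate $\FF_q$-linear conditions $w \cdot (\text{$j$-th column of }\Gamma(v)) = 0$ is the whole content of the proof, so I would write that expansion out explicitly and let the rest follow. I do not anticipate any genuine obstacle.
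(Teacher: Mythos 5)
Your proposal is correct and follows essentially the same route as the paper's proof: expand each $v_i$ in the basis $\Gamma$, interchange the order of summation, use $\FF_q$-linear independence of $\Gamma$ to reduce $v\cdot w = 0$ to the $m$ conditions that $w$ be orthogonal to the columns of $\Gamma(v)$, and conclude since those columns span $W$. No discrepancies to note.
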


\begin{proof}
Let $\Gamma := \{\gamma_1 \, \cdots, \gamma_m\}$ be a basis of $\FF_{q^m}$ over $\FF_q$, and let $Y:= \Gamma(v) \in \FF_q^{n \times m}$, where $W:= \colsp_{\FF_q}(Y)$. Then $v \cdot w = 0 \Leftrightarrow \sum_{i=1}^n v_iw_i = 0 \Leftrightarrow \sum_{i=1}^n \left( \sum_{j=1}^m \gamma_{j}v_{ij} \right)w_i = 0 \Leftrightarrow \sum_{j=1}^m \gamma_j \left(\sum_{i=1}^n v_{ij}w_i\right) = 0$. Since $\Gamma$ is a basis of $\FF_{q^m}$ over $\FF_q$ and $v_{ij}w_i \in \FF_q$, the previous equality holds if and only if $\sum_{i=1}^n v_{ij}w_i = 0$ for all $1 \leq j \leq m$. But note $\sum_{i=1}^n v_{ij}w_i = v^{(j)} \cdot w$, where $v^{(j)}$ is the $j^{\textup{th}}$ column of $Y$. Hence $v \cdot w = 0 \Leftrightarrow v^{(j)} \cdot w = 0$ for all $1 \leq j \leq m \Leftrightarrow w \in \colsp_{\FF_q}(Y)^{\perp} \Leftrightarrow w \in W^{\perp}$.
\end{proof}

The following Lemma relates the rank support of elements of the code $\C$ with the Hamming support of elements of the associated Hamming-metric code $\C^H$.

\begin{lemma}\label{rank sup/ ham sup}
Let $\C \leq \FF_{q^m}^n$ be a rank metric code, $\C^H$ be a Hamming-metric code associated to $\C$ via $H$. Furthermore, let $V = \{v_1, \cdots , v_t\}$ be a subset of $\C$ and $V \cdot H:= \{v_1 \cdot H, \cdots, v_t \cdot H\}$ . Then 
$$S_R(V) = W \Leftrightarrow S_H(V \cdot H) = \PF_q^n - P( W^{\perp}).$$
Moreover if $\mcM$ and $P(\mcM)$ are the q-matroid and projectivization matroid induced respectively by $\C$ and $\C^H$ then $W^{\perp}$ and $P( W^{\perp})$ are, respectively, flats of $\mcM$ and $P(\mcM)$.
\end{lemma}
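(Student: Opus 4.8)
The plan is to reduce the statement to a coordinatewise computation built on Lemma~\ref{dual support}, done first for a single codeword and then for the whole set $V$. First I would fix $v\in\C$ and identify $S_H(v\cdot H)$. Writing $h_j$ for the $j$th column of $H$, the $j$th coordinate of $v\cdot H$ is $v\cdot h_j$, so under the relabeling of the groundset of $\C^H$ by $\PF_q^n$ we have $S_H(v\cdot H)=\{\langle h_j\rangle : v\cdot h_j\neq 0\}$. Since each $h_j$ is nonzero, Lemma~\ref{dual support} with $w=h_j$ and $W=S_R(v)$ gives $v\cdot h_j=0\iff h_j\in S_R(v)^{\perp}\iff\langle h_j\rangle\leq S_R(v)^{\perp}$, and because the columns of $H$ represent every point of $\PF_q^n$ this yields the identity $S_H(v\cdot H)=\PF_q^n-P(S_R(v)^{\perp})$.

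Next I would extend this to $V=\{v_1,\dots,v_t\}$. By the definition of $S_H$ on a set, $S_H(V\cdot H)=\bigcup_{i=1}^t S_H(v_i\cdot H)=\bigcup_{i=1}^t\bigl(\PF_q^n-P(S_R(v_i)^{\perp})\bigr)=\PF_q^n-\bigcap_{i=1}^t P(S_R(v_i)^{\perp})$. Using that $P$ commutes with intersections (noted just after the definition of the projectivization map, extended to finitely many subspaces by an easy induction) and that $\bigcap_{i=1}^t S_R(v_i)^{\perp}=\bigl(\sum_{i=1}^t S_R(v_i)\bigr)^{\perp}=S_R(V)^{\perp}$, I obtain the identity
$$S_H(V\cdot H)=\PF_q^n-P\bigl(S_R(V)^{\perp}\bigr),$$
valid for every subset $V\subseteq\C$. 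The biconditional is then immediate: the forward direction is substitution, and for the converse, $\PF_q^n-P(S_R(V)^{\perp})=\PF_q^n-P(W^{\perp})$ forces $P(S_R(V)^{\perp})=P(W^{\perp})$; applying $P^{-1}$ (recall $P^{-1}\circ P=\id$ on subspaces) gives $S_R(V)^{\perp}=W^{\perp}$, and since $\perp$ is an involution for the standard dot product on $\FF_q^n$ this gives $S_R(V)=W$.

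For the ``moreover'' part I would note that $W^{\perp}=S_R(V)^{\perp}=\bigcap_{i=1}^t S_R(v_i)^{\perp}$ and show that each $S_R(v_i)^{\perp}$ is a flat of $\mcM=\mcM_{\C}$: since $v_i\in\C_R(S_R(v_i))$ we have $|\C_R(S_R(v_i))|\neq 0$, so Theorem~\ref{weight distribution equality}(2) together with Remark~\ref{support/flats} (applied to the subspace $S_R(v_i)^{\perp}$, whose orthogonal complement is $S_R(v_i)$) puts $S_R(v_i)^{\perp}$ in $\F_{\mcM}$. Iterating axiom (qF2) then gives $W^{\perp}\in\F_{\mcM}$ (with the empty intersection understood as $\FF_q^n\in\F_{\mcM}$ by (qF1)), and Lemma~\ref{flats properties}(1) gives $P(W^{\perp})\in\F_{P(\mcM)}$; via Theorem~\ref{representable project} and the relabeling of Remark~\ref{relabel} this is a flat of $M_{\C^H}$. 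The lemma is essentially bookkeeping; the only points requiring a bit of care are keeping the relabeled groundset of $\C^H$ straight and applying the ``$P$ commutes with intersection'' fact to finitely many, rather than two, subspaces, so I do not anticipate a genuine obstacle.
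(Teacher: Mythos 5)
Your proposal is correct and follows essentially the same route as the paper: apply Lemma~\ref{dual support} to the columns of $H$ to get $S_H(v_j\cdot H)=\PF_q^n-P(S_R(v_j)^{\perp})$, take the union over $j$, and rewrite it as the complement of $P\bigl(\bigcap_j S_R(v_j)^{\perp}\bigr)=P(W^{\perp})$. Your handling of the ``moreover'' part is in fact slightly more careful than the paper's one-line appeal to Remark~\ref{support/flats} (you realize each $S_R(v_j)^{\perp}$ as a flat and intersect via (qF2), rather than needing $\C_R(W)$ itself to be nonempty), but this is a refinement of the same argument, not a different one.
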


\begin{proof}
Consider the subset $V := \{v_1, \cdots , v_t\} \subseteq \C$. By definition, $S_R(V) = \sum_{j=1}^t S_R(v_j) =:  W$.
Let $W_j := S_R(v_j)$. For all $1 \leq j \leq t$,  by Lemma \ref{dual support}, $v_j \cdot w = 0$ if and only if $w \in W_j^{\perp}$ . Hence for all columns $h_i$ of $H$, it follows that $v_j \cdot h_i = 0 $ if and only if $ h_i \in W_j^{\perp}$. By definition, this is true if and only if $S_H(v_j \cdot H) = \PF_q^n - P(W_j^{\perp})$.
Hence $S_H( V \cdot H) = \bigcup_{i=1}^t S_H(v_j \cdot H) = \bigcup_{j=1}^t (\PF_q^n - P(W_j^{\perp})),$ where the first equality follows by definition.
Therefore $S_R(V) = W \Leftrightarrow S_H(V \cdot H) = \bigcup_{j=1}^t (\PF_q^n - P(W_j^{\perp})) = \PF_q^n - (\bigcap_{j=1}^t P(W_j^{\perp})) = \PF_q^n - P(W^{\perp})$.
Finally, $W^{\perp}$ and $P(W^{\perp})$ are flats of $\mcM$ and $P(\mcM)$ respectively because of Remark \ref{support/flats}.
\end{proof}

We are now ready to show the critical theorem for $\FF_{q^m}$-linear rank metric codes and $q$-matroids.

\begin{theorem}
Let $\C \leq \FF_{q^m}^n$ be an $\FF_{q^m}$-linear rank metric code and $\mcM$ its induced $q$-matroid. For all $W \leq \FF_q^n$, the number of ordered $t$-tuples $V = (v_1, \cdots , v_t)$, where $v_j \in \C$ for all $1 \leq j \leq t$, such that $S_R(V) = W$ is given by $\chi_{\mcM/W^{\perp}}(q^{mt})$. 
\end{theorem}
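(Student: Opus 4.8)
The strategy is to transfer the rank-metric statement to a Hamming-metric statement via the associated block code $\C^H$, apply the classical critical theorem (Theorem \ref{critical theorem}) there, and then translate back using the dictionary between $\mcM$, $P(\mcM)$, and $M_{\C^H}$ developed in Section 6. First I would fix a generating matrix $G$ of $\C$ and a matrix $H \in \FF_q^{n \times \frac{q^n-1}{q-1}}$ whose columns represent the distinct points of $\PF_q^n$, giving the $\FF_q$-decomposition $G^H = G \cdot H$ and the associated Hamming-metric code $\C^H = \rowsp_{\FF_{q^m}}(G^H)$. By Theorem \ref{representable project} (together with Remark \ref{relabel}), $M_{\C^H} \cong P(\mcM)$ as matroids, where the groundset $\left[\frac{q^n-1}{q-1}\right]$ is relabeled by the points of $\PF_q^n$.

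Next I would set up the counting bijection. Given $W \le \FF_q^n$, I want to count ordered $t$-tuples $V = (v_1, \dots, v_t) \in \C^t$ with $S_R(V) = W$. The map $V \mapsto V \cdot H := (v_1 \cdot H, \dots, v_t \cdot H)$ sends $\C^t$ to $(\C^H)^t$ bijectively: it is injective because if $\C$ is degenerate the redundant coordinates are simply dropped, but more carefully, $v \mapsto v \cdot H$ is injective on $\FF_{q^m}^n$ precisely when $H$ has rank $n$ — which it does, since its columns span $\FF_q^n$ — and it is surjective onto $\C^H$ by definition. By Lemma \ref{rank sup/ ham sup}, $S_R(V) = W$ if and only if $S_H(V \cdot H) = \PF_q^n - P(W^\perp)$. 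Therefore the number of such $t$-tuples $V$ equals the number of ordered $t$-tuples of codewords of $\C^H$ with Hamming support exactly $\PF_q^n - P(W^\perp)$.

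Now I would apply the critical theorem. Since $\C^H \le \FF_{q^m}^n$ is a linear block code over $\FF_{q^m}$, which we may view as $\FF_{q^m} = \FF_{(q')}$ with $q' = q^m$, Theorem \ref{critical theorem} says the number of ordered $t$-tuples of codewords of $\C^H$ with Hamming support $A := \PF_q^n - P(W^\perp)$ equals $\chi_{M_{\C^H}/(\PF_q^n - A)}\big((q^m)^t\big) = \chi_{M_{\C^H}/P(W^\perp)}(q^{mt})$, using that $\PF_q^n - A = P(W^\perp)$ after the relabeling. Then I would invoke the chain of identifications: $M_{\C^H} \cong P(\mcM)$ (Theorem \ref{representable project}), so $\chi_{M_{\C^H}/P(W^\perp)}(x) = \chi_{P(\mcM)/P(W^\perp)}(x)$, and by Proposition \ref{char project equiv} this equals $\chi_{\mcM/W^\perp}(x)$. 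Evaluating at $x = q^{mt}$ yields the claim.

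\textbf{Main obstacle.} The delicate points are bookkeeping rather than conceptual. First, one must be careful that Theorem \ref{critical theorem} as stated is over $\FF_q$ with parameter $q^t$, so applying it to $\C^H$ over $\FF_{q^m}$ requires reading it with the base field $\FF_{q^m}$ and parameter $(q^m)^t = q^{mt}$ — this is legitimate since the critical theorem holds over any finite field. Second, if $\C$ (hence $\C^H$) is degenerate, some coordinates of every codeword vanish identically; one should check that Lemma \ref{rank sup/ ham sup} and the support bijection still behave correctly, which they do because degenerate coordinates correspond to loops and are harmlessly absorbed. Third, I would double-check that the relabeled support convention $S_H(v) = \{\langle h_i\rangle : v_i \ne 0\}$ makes $\PF_q^n - S_H(V\cdot H) = P(W^\perp)$ match the groundset element $[n] - A$ appearing in Theorem \ref{critical theorem}; this is exactly the content of Lemma \ref{rank sup/ ham sup} combined with Remark \ref{relabel}, so no genuine difficulty remains once the notation is aligned.
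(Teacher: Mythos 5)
Your proposal is correct and follows essentially the same route as the paper's own proof: pass to the associated Hamming-metric code $\C^H$ via the bijection $V \mapsto V\cdot H$ (using that $H$ has full row rank), apply the classical critical theorem to $\C^H$ over $\FF_{q^m}$, and translate back through Lemma \ref{rank sup/ ham sup}, Theorem \ref{representable project}, and Proposition \ref{char project equiv}. The only difference is that you spell out a few bookkeeping checks (base field, degeneracy, relabeling) more explicitly than the paper does.
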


\begin{proof}
Let $\C^H$ be the Hamming-metric code associated with $\C$ via $H$ and $P(\mcM)$ be its associated matroid. Note that every element of $\C^H$ is of the form $v \cdot H$ for some $v \in \C$. Hence every tuple of elements of $\C^H$ is of the form $V \cdot H$ for some $V \subseteq \C$. Furthermore, since $H$ has full-row rank, there is a bijection between elements of $\C$ and $\C^H$ and hence a bijection between $t$-tuples $V \subseteq \C$ and $t$-tuples $V \cdot H \subseteq \C^H$. By Theorem \ref{critical theorem}, the number of $t$-tuple $V \cdot H \subseteq \C^H$ such that $S_H(V \cdot H) = \PE - P(W^{\perp})$ is given by $\chi_{P(\mcM) / P(W^{\perp})}(q^{mt})$.  Moreover, by Proposition \ref{char project equiv}, $\chi_{P(\mcM)/P(W^{\perp})}(q^{mt}) = \chi_{\mcM / W^{\perp}}(q^{mt})$. 
Finally, by Lemma \ref{rank sup/ ham sup}, $S_R(V \cdot H) = \PE - P(W^{\perp})$ if and only if $S_R(V) = W$ and therefore $\chi_{\mcM / W^{\perp}}(q^{mt})$ counts the number of $t$-tuples $V \subseteq \C$ such that $S_R(V) = W$. 
\end{proof}

The above result, and its proof, shows a close connection between the critical theorem for matroids and that for $q$-matroids. Furthermore it can easily been seen from the above approach that the critical problem for $q$-matroids, that is finding the smallest power of $q$ that makes the characteristic polynomial of a $q$-matroid non-zero, is a specific case of the critical problem for matroids. 

\section*{Further questions}
Here are a few questions that arise from our work:
\begin{itemize}
    \item If a projectivization matroid $P(\mcM)$ is $\FF_{q^m}$-representable then is the $q$-matroid $\mcM$ representable?
    
    \item It is well known that the characteristic polynomial of a matroid can be derived from the Tutte polynomial of that matroid. Hence the characteristic polynomial of a $q$-matroid $\mcM$ can also be derived from the Tutte polynomial of the projectivization matroid $P(\mcM)$. Can invariants of $\FF_{q^m}$-rank metric codes be determined from the Tutte polynomial of the projectivization matroid associated to the code.
\end{itemize}

\bibliographystyle{plain}
\bibliography{projmatroid}

\end{document}